\theoremstyle{plain}
\newtheorem{lem}{Lemma A.}
\author[Charles Collot]{Charles Collot}
\address{Charles Collot, Laboratoire AGM, CY Cergy Paris Universit\'e, 2 avenue Adolphe Chauvin, 95300 Pontoise, France}
\email{ccollot@cyu.fr}
\author[K. Zhang]{Kaiqiang Zhang}
\address{Kaiqiang Zhang, Department of Mathematics, University of Science and Technology Beijing, Beijing, 30 Xueyuan Road, Haidian District, Beijing 100083, China}
\email{mathzhangkq@gmail.com}
\subjclass[2000]{35B44, 35C06, 35B35, 35K57, 35Q92}
\begin{document}

\numberwithin{equation}{section}
\renewcommand{\theequation}{\arabic{section}.\arabic{equation}}
\theoremstyle{plain}
\newtheorem{exam}{Example}[section]
\newtheorem{theorem}[exam]{Theorem}
\newtheorem{lemma}[exam]{Lemma}
\newtheorem{remark}[exam]{Remark}
\newtheorem{hyp}[exam]{Hypothesis}
\newtheorem{proposition}[exam]{Proposition}
\newtheorem{definition}[exam]{Definition}
\newtheorem{corollary}[exam]{Corollary}
\newtheorem{analytic}[exam]{Analytic extension principle}
\newtheorem{notation}[exam]{Notation}
\setlength{\baselineskip}{1.5\baselineskip}

\title[Keller-Segel system]{{On the stability of Type I self-similar blowups for the Keller-Segel system in three dimensions and higher}}
\begin{abstract}
We consider the parabolic-elliptic Keller-Segel system in spatial dimensions $d\geq3$, which corresponds to the mass supercritical case. Some solutions become singular in finite time, an important example being backward self-similar solutions. Herrero et al. and Brenner et al. showed the existence of such profiles, countably many in dimensions $3\leq d \leq 9$ and at least two for $d\geq 10$. We establish that all these self-similar profiles are stable along a set of initial data with finite Lipschitz codimension equal to the number of instable eigenmodes. This extends the recent finding of Glogi\'c et al. showing the stability of the fundamental self-similar profile. We obtain additional results, such as the possibility of the solutions we construct to originate from smooth and compactly supported initial data, their convergence at blow-up time, and the Lipschitz regularity of the blow-up time. Our proof extends the approach proposed in Collot et al., based on renormalizing the solution around a modulated self-similar solution, and using a spectral gap for the linearized operator in the parabolic neighbourhood of the singularity.
\end{abstract}

\maketitle

\section{Introduction}

\subsection{The parabolic-elliptic Keller-Segel system}

The present paper deals with the parabolic-elliptic Keller-Segel system
\begin{equation}\label{1.1}
\left\{\begin{array}{l}
\partial_{t} u=\Delta u-\nabla \cdot\left(u \nabla \Phi_{u}\right),  \\
0=\Delta \Phi_{u}+u,
\end{array}\quad \text{in}\ \mathbb{R}^{d} .\right.
\end{equation}
This system arises in describing chemotactic aggregation in biology \cite{Keller-Segel}. Here, $u(x,t)$ represents the cell density, while $\Phi_{u}$ stands for the concentration of chemoattractant. Additionally, system \eqref{1.1}  models the self-gravitating matter in stellar
dynamics \cite{Wolansky}, with $u(x,t)$ denoting the gas density and $\Phi_{u}$ representing the gravitational potential. We refer to \cite{Horstmann,Biler-2019} for further background on \eqref{1.1}.

Local well-posedness results for \eqref{1.1} can be found in \cite{Bedrossian-Masmoudi-2014,Biler-2018}. For $u$ radial, there holds
$$
\nabla \Phi_u(x)=-\frac{x}{ |\mathbb S^{d-1}| |x|^d}\int_{|y|<|x|}u(y)dy
$$
so that the system \eqref{1.1} becomes the radial nonlocal semilinear parabolic equation
$$
\partial_t u =\Delta u +\nabla.\left(u \ \frac{x}{ |\mathbb S^{d-1}| |x|^d} \int_{|y|<|x|}u(y)dy \right).
$$
Given a radial $u_0\in L^\infty(\mathbb{R}^d)$, there exists a maximal time of existence $T>0$ and a unique solution of \eqref{1.1} that belongs to $L^\infty([0,T']\times \mathbb R^d)$ for any $0<T'<T$, which is moreover smooth on $(0,T)\times \mathbb R^d$, see \cite{Giga-Mizoguchi-Senba-2011}. The solution blow up in finite time $T<\infty$ if
$$
\limsup_{t\to T^-}||u(t)||_{L^\infty(\mathbb{R}^d)}= \infty,
$$
and by a comparison argument, there holds the lower bound of the blow up rate
$$
||u(t)||_{L^\infty(\mathbb{R}^d)}\ge (T-t)^{-1}.
$$
One can find other lower bounds in \cite{Kozono-2010}.
If we discard the diffusion and transport in the first equation of \eqref{1.1}, one has $u_t=u^2$.
The solution of the corresponding ODE $y'=y^2$ when $y(0)>0$ is given by $y(t)=(T-t)^{-1}$, for $0<t<T=y(0)^{-1}$. It is said that the blow-up is of type I if a solution $u$ of \eqref{1.1} blows up at the ODE rate, i.e.
\begin{equation}\label{mar}
\limsup_{t\to T^-} \ (T-t) \| u(t)\|_{L^\infty(\mathbb R^d)}<\infty
\end{equation}
 whereas blow-up is said to be of type II if \eqref{mar} fails.

Solutions admit a scaling invariance, i.e., if $u$ is a solution of \eqref{1.1}, then so is
\begin{equation}\label{scaling}
u_\lambda(x,t)=\frac{1}{\lambda^2}u\bigg(\frac{x}{\lambda},\frac{t}{\lambda^2}\bigg)
\end{equation}
for all $\lambda>0$. The system \eqref{1.1} preserves mass, that is, if $u_0\in L^1$, then for all $t\in [0,T)$
$$
M:=\int_{\mathbb{R}^2}u_0(x)dx=\int_{\mathbb{R}^2}u(x,t)dx.
$$

\subsection{The two-dimensional case}

It has been proved in \cite{Nagai-1995,Childress-Jerome} that the solution exists globally for $d=1$. The case $d=2$ is called mass critical because the scaling transformation \eqref{scaling} preserves the total mass $M(u)=M(u_\lambda)$. Any blowup solution is necessarily of type II \cite{Naito-2008,Suzuki-2011}. A mass threshold of $8\pi$ between global existence and finite blow-up for localised solutions was conjectured in \cite{Childress,Childress-Jerome}, and later proven in \cite{Blanchet-2012,Diaz-1998,Blanchet-2006}. Namely, assume $u_0\geq 0$ with $\langle x\rangle^2u_0\in L^1(\mathbb R^2)$ and $|u_0||\ln u_0|\in L^1(\mathbb R^2)$. If $M<8\pi$ then the solution of \eqref{1.1} exists globally in time, see e.g. \cite{Nagai,Blanchet-2006,Dolbeault-2004}. If $M>8\pi$, then the positive solution of \eqref{1.1} blows up in finite time \cite{Kurokiba-2003,Raphael-2014}.
A detailed construction of blow-up solutions $u(t)$ was done in \cite{He-Ve,Ve,Raphael-2014,Collot-Ghoul-2022,Collot-2022} where the pattern of a collapsing stationary state $U$ was obtained:
\begin{equation}\label{CG}
u(t,x)=\frac{1}{\lambda^2(t)}U\bigg(\frac{x}{\lambda(t)}\bigg)+\tilde u(t,x),\qquad U(x)=\frac{8}{(1+|x|^2)^{2}},
\end{equation}
where $\lambda(t)=2e^{-\frac{2+\gamma}{2}}\sqrt{T-t}e^{-\sqrt{\frac{|\log(T-t)}{2}}}(1+o(1))$ and $\lambda^{2}(t)\|\tilde u(t)\|_{L^\infty}\to 0$ as $t\to T$. For radial nonnegative solutions of \eqref{1.1}, Mizoguchi \cite{Mizoguchi-2022} showed that \eqref{CG} is the only blowup mechanism. At the threshold $M=8\pi$,  radial solutions exist globally in time \cite{Biler-2006}. In \cite{Blanchet-2008,Davila-2020,Ghoul-2018}, the existence of infinite time blowup solutions of $8\pi$ mass was demonstrated.

\subsection{The higher dimensional case} \label{subsec:patterns}

In dimensions $d\geq 3$ the system \eqref{1.1} is mass-supercritical. Indeed, the scaling transformation \eqref{scaling} preserves the $L^{\frac{d}{2}}$-norm, i.e. $||u_\lambda(0)||_{L^{\frac{d}{2}}(\mathbb{R}^d)}=||u(0)||_{L^{\frac{d}{2}}(\mathbb{R}^d)}$, whose Lebesgue exponent is larger than $1$, the exponent associated to the conserved mass. Initial data which are small enough in critical spaces produce global solutions \cite{Biler-2018,Calvez-2012,Corrias,Lemarie}. Large data which are concentrated lead to finite time blowup \cite{Corrias,Calvez-2012,Nagai-1995}. A striking difference with the mass-critical case is that solutions of \eqref{1.1} can blow up with any arbitrary mass since $M(u_\lambda(0))=\lambda^{d-2}M(u(0))$.

Some properties of general blow-up solutions have been obtained, mostly for radial and non-negative solutions of \eqref{1.1}. In dimensions $d\in[3,9],$ any such solution with an initial data that is radially decreasing that blows up is of type I \cite{Mizoguchi-2011}. For $d\geq 3$, any such solution which is of type I is asymptotically backward self-similar, see \cite{Giga-Mizoguchi-Senba-2011}. The profile at blow-up time of blowup solutions with radially nonincreasing initial data satisfy $u(x,T)\sim c |x|^{-2}$, see \cite{Souplet-Winkler}.

Precise examples of radial blow-up solutions with four distinct blow-up patterns are known to exist. First, backward self-similar solutions are those of the form
$$
u(x,t)=\frac{1}{T-t}U\left(\frac{x}{\sqrt{T-t}}\right).
$$
There exists an explicit such solution in all dimensions $d\geq 3$ given by
\begin{equation} \label{id:blow-up-collapsing-sphere}
U_0(x)=\frac{4(d-2)(2d+|x|^2)}{(2(d-2)+|x|^2)^2},
\end{equation}
see \cite{Herrero-1998,Michael}. In dimensions $3\leq d \leq 9$, there exists in fact a countable family of such self-similar solutions
 \begin{equation}\label{gmfs}
u(x,t)=\frac{1}{T-t}U_n\left(\frac{x}{\sqrt{T-t}}\right)
\end{equation}
for $n\geq 0$ an integer \cite{Herrero-1998}. Self-similar solutions of the form \eqref{gmfs} are clearly type I blow-ups. There exist other type I solutions, but with flatter spatial scales
\begin{equation} \label{id:blow-up-flat-typeI}
u(x,t)=\frac{1}{T-t}F\left(\frac{x}{\sqrt{T-t}|\ln T-t|^{\alpha_d}}\right)+\tilde u(x,t),
\end{equation}
with $(T-t)\| \tilde u(t)\|_{L^\infty(\mathbb R^d)} \to 0$ as $t\to T$, for $d=3,4$ with $\alpha_3=1/6$ and $\alpha_4=1/4$, see \cite{Nguyen-Zaag-2023} (where the expression for $F$ can be found). Type I solutions with spatial scales of the form $(T-t)^\beta$ for $\beta>1/2$ are conjectured to exist for $d\geq 3$ in \cite{Nguyen-Zaag-2023}.

There also exist type II blow-up for all $d\geq 3$. If $d\geq 11$ type II solutions concentrating a stationary state $Q$, of the form
\begin{equation} \label{id:blow-up-collapsing-stationary-state}
u(x,t)=\frac{1}{\lambda^2(t)} Q\left(\frac{x}{\lambda(t)}\right)+\tilde u(x,t), \quad \mbox{where } \frac{\lambda(t)}{\sqrt{T-t}}\to 0
\end{equation}
and $ \lambda^2(t)\| \tilde u(t)\|_{L^\infty(\mathbb R^d)} \to 0$ as $t\to T$, are constructed in \cite{Mizo-Senba}. Finally, there exist type II solutions that concentrate their mass near a sphere of radius $R(t)=c(u)(T-t)^{1/d}$ that shrinks to a point, of the form
\begin{equation} \label{id:blow-up-collapsing-sphere}
u(x,t)=\frac{1}{(T-t)^{2\frac{d-1}{d}}}W\left(\frac{|x|-R(t)}{(T-t)^{\frac{d-1}{d}}} \right)+\tilde u(x,t),
\end{equation}
with $W$ a one dimensional traveling wave and $(T-t)^{2\frac{d-1}{d}}\| \tilde u(t)\|_{L^\infty(\mathbb R^d)}\to 0$ as $t\to T$, see \cite{Collot-2023}, and \cite{Herrero-1997} for an earlier formal construction. This pattern also emerges in certain blowup solutions of the nonlinear Schr$\mathrm{\ddot{o}}$dinger equation \cite{Merle-2014,Fibich-2007}. There exists a particular self-similar solution with profile $U_*$ which from numerical simulations lies at the threshold between the collapsing sphere blow-up \eqref{id:blow-up-collapsing-sphere} and blow-up with self-similar profile $U_0$ \cite{Michael}.

We conjecture that there are no other blow-up patterns for localised radial solutions other than these four ones.

\subsection{Main results}

In this article we address the problem of the stability of the blow-up patterns. This understanding of the local phase portrait is a first step towards the global one. The self-similar blow-up solution $U_0$ has been shown to be stable by radial Schwartz perturbations with small $H^3$ norm by Glogi$\mathrm{\acute{c}}$ and Sch$\mathrm{\ddot{o}}$rkhuber \cite{Glogic} using a semigroup approach. Very recently, Li and Zhou \cite{Li-Zhou} constructed a blowup solution of the Keller-Segel-Navier-Stokes system, by expanding further the semi-group approach to the stability analysis of $U_0$. Let $N_0$ denote the number of non-positive eigenvalues of the linearized operator around $U_0$, see Proposition \ref{self profile} for an exact definition. It was showed numerically that $N_0=1$ for $d\ge3$ in \cite{Michael}, and this was proved rigorously for $d=3$ in \cite{Glogic}.
 
 Our first result shows the radial stability of $U_0$ by general perturbations.

\begin{theorem}[Radial stability of $U_0$] \label{theorem-1}
Assume $d=3$, or $d\ge 4$ and $N_0=1$. Then there exist $\epsilon>0$ such that for any radial $u_0=U_0+\tilde u_0$ with $\| \tilde u_0\|_{L^\infty(\mathbb R^d)}\leq \epsilon$, the solution to \eqref{1.1} blows up in finite time $0<T<+\infty$ (depending on $u_0$) and can be decomposed as
$$
u(t,x)=\frac{1}{T-t} U_0\left(\frac{x}{\sqrt{T-t}} \right)+\tilde{u}(t,x)
$$
where:\\
$(\mathrm{i})$ \emph{Behaviour of the maximum norm}: there holds the asymptotic stability of $U_0$,
\begin{equation}\label{norms-1}
\lim_{t\to T}(T-t)||\tilde{u}(t)||_{L^\infty}=0.
\end{equation}
In particular, the blow-up is type I.\\
$(\mathrm{ii})$ \emph{Convergence of the solution:} there exists a function $u^*$ such that $u(t,x)\to u^*(x)$ for any $x\neq 0$ as $t\to T$. For any $p\in [1,\frac 32)$, assuming $u_0\in L^p(\mathbb R^3)$, we have $u^*\in L^p(\mathbb{R}^3)$ and
\begin{equation}\label{convergences-1}
\lim_{t\to T}||u(t)-u^*||_{L^p}=0.
\end{equation}
$(\mathrm{iii})$ \emph{Regularity of the blow-up time}: the blow-up time $T$ is a Lipschitz function of $u_0$ with respect to the $L^\infty$ norm, i.e. there exists $C>0$ such that $|T(u_0)-T(u_0')|\leq C \| u_0-u_0'\|_{L^\infty(\mathbb R^d)}$ for any two such functions $u_0$ and $u_0'$.
\end{theorem}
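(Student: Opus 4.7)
\medskip
\noindent\textbf{Proof plan.} The approach follows the scheme outlined in the abstract: renormalize around a modulated copy of $U_0$ and exploit a spectral gap for the self-similar linearized operator on the codimension-one subspace transverse to the marginally unstable scaling mode. Introduce a modulation parameter $\mu(t)>0$ meant to play the role of $T-t$, set $y = x/\sqrt{\mu(t)}$ and $ds/dt = 1/\mu(t)$, and decompose
\begin{equation*}
u(t,x) = \frac{1}{\mu(t)}\bigl[U_0(y) + w(s,y)\bigr].
\end{equation*}
The perturbation $w$ then satisfies an equation of the form $\partial_s w = \mathcal{L} w + N(w) + \text{(modulation term)}$, where $\mathcal{L}$ is the linearization of the self-similar equation about $U_0$ and $N$ collects the quadratic, nonlocal terms coming from $\nabla\cdot(u\nabla\Phi_u)$.

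The first step is spectral: under the assumption $N_0=1$, the operator $\mathcal{L}$ has exactly one non-positive eigenvalue, with radial eigenfunction $\psi_0$ stemming from the time-translation invariance of \eqref{1.1}. Fix $\mu(t)$ by imposing an orthogonality $\langle w(s,\cdot), \phi_0\rangle = 0$ against an appropriate dual function $\phi_0$; differentiating this condition in $s$ yields a modulation ODE showing that $|\dot\mu + 1|$ is controlled quadratically by $\|w\|$. On the orthogonal complement, the spectrum of $\mathcal{L}$ lies in a right half-plane $\{\mathrm{Re}\,\lambda \geq \sigma\}$ with $\sigma>0$, producing the basic dispersive estimate in a weighted $L^2$ norm built from the dual Gaussian. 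The core of the argument is then a bootstrap for $w$: close weighted $L^2$ estimates in the parabolic inner region $|y|\lesssim e^{s/2}$ using the spectral gap, complement them with $L^\infty$ bounds in the outer region through parabolic regularity for the self-similar drift-diffusion operator, and tame the nonlocal nonlinearity via Hardy-type controls on $\nabla\Phi_w$ matched to the scaling of $U_0$. This yields $\|w(s,\cdot)\|\to 0$ as $s\to+\infty$, whence the modulation ODE forces $\mu(t)\sim T-t$ for a finite $T=T(u_0)$, establishing the type I assertion $(\mathrm{i})$.

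For $(\mathrm{ii})$, once $(T-t)\|\tilde u(t)\|_{L^\infty}\to 0$ is known, parabolic regularity away from the origin yields pointwise convergence $u(t,\cdot)\to u^*$ on $\mathbb R^3\setminus\{0\}$. The pointwise limit inherits from $U_0$ a singularity $\lesssim|x|^{-2}$ near the origin, which is locally in $L^p$ for $p<3/2$, while its far-field behavior comes from $u_0\in L^p$; convergence in $L^p$ then follows by dominated convergence together with uniform-in-$t$ tail estimates derived from the comparison principle. For $(\mathrm{iii})$, the blow-up time is read off from the modulation parameter, and Lipschitz dependence of $T(u_0)$ on $u_0$ follows from Lipschitz continuity of the semigroup of \eqref{1.1} on bounded time intervals combined with the Lipschitz character of the modulation construction in the perturbative regime around $U_0$. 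The principal obstacle is the bootstrap step: propagating genuine $L^\infty$ control (not merely weighted $L^2$) through a matching region between the inner parabolic and outer zones, while handling the critical-scaling nonlocal nonlinearity $\nabla\cdot(w\nabla\Phi_w)$ without losing the exponential decay rate prescribed by the spectral gap.
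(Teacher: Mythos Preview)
Your overall scheme --- modulate around a rescaled copy of the profile, impose orthogonality to the single non-positive eigenmode, close a bootstrap combining a weighted $L^2$ energy estimate driven by the spectral gap with $L^\infty$ bounds in the far field --- is exactly the architecture the paper uses. The principal gap is that you propose to carry this out directly on $u$ and its nonlocal equation, whereas the paper first passes to the \emph{reduced mass} $w(t,r)=\frac{1}{2r^3}\int_0^r u(t,\tilde r)\tilde r^2\,d\tilde r$, which converts the radial Keller--Segel system into the \emph{local} semilinear heat equation $\partial_t w=\Delta w+\Lambda' w^2+6w^2$ in $\mathbb{R}^5$. This transformation is not cosmetic: it is what makes every subsequent step tractable. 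The linearized operator $L_n=-\Delta+\frac12\Lambda-2\Lambda'(\Phi_n\cdot)-12\Phi_n$ becomes genuinely self-adjoint in $L^2_{\rho_n}$ with $\rho_n=\tilde\rho_n e^{-|x|^2/4}$ and has compact resolvent, so the spectral gap is an honest variational inequality rather than a semigroup bound for a non-self-adjoint nonlocal operator. The nonlinearity becomes the purely local $6v^2+\Lambda'(v^2)$, so your ``Hardy-type controls on $\nabla\Phi_w$'' are never needed. And the outer-region $L^\infty$ control is obtained by the scalar parabolic comparison principle applied to explicit super/subsolutions --- a tool simply unavailable for the nonlocal drift-diffusion equation you would be working with.

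Two further points where the paper's execution differs from your sketch. First, the bootstrap carries not only $\|\varepsilon\|_{L^2_\rho}$ and $\|\varepsilon\|_{L^\infty}$ but also the sharp weighted bound $\bigl\|\frac{y^2 v}{1+e^{-s}y^2}\bigr\|_{L^\infty}$ and the derivative bound $\|y\cdot\nabla v\|_{L^\infty}$; the latter is essential because one must eventually invert the partial-mass map via $u=6w+2r\partial_r w$ to recover $(\mathrm{i})$ for $u$ itself. Second, for $(\mathrm{iii})$ the paper does not merely invoke ``Lipschitz continuity of the semigroup'': it runs the full modulation/energy machinery on the \emph{difference} of two nearby solutions (in the same self-similar time $s$), obtaining Lipschitz control on the difference of scaling parameters and hence on $T$. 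Your proposal would benefit from naming the partial-mass reduction up front and then following the same local-equation analysis; as written, the nonlocal route you describe leaves precisely the obstacle you flag at the end unresolved.
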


\begin{remark} \label{remark1}

The result of Theorem \ref{theorem-1} naturally extends to other spaces for which the system \eqref{1.1} is well posed. This is the case, for example, for $L^p(\mathbb R^d)$ for $d/2<p<d$. Indeed, there exists $t_0>0$ such that if $\| \tilde u_0\|_{L^p}$ is small enough then the solution of \eqref{1.1} with data $U_0+\tilde u_0$ satisfies $u(t_0)=U_0+\tilde u(t_0)$ and the map $\tilde u_0\mapsto \tilde u(t_0)$ is Lipschitz from $L^p$ to $L^\infty$ in a neighbourhood of the origin by parabolic regularization. By applying Theorem \ref{theorem-1} to $u(t_0)$, we infer that the analogue of Theorem \ref{theorem-1} holds true in $L^p(\mathbb R^d)$.

\end{remark}

Our second result shows the Lipschitz codimensional stability of the countable family of self-similar profiles \eqref{gmfs}, proving that $U_n$ is the blow-up profile of a class of initial data residing on a manifold of radial initial data of codimension $N_n-1$. The integer $N_n$ denotes the number of radial instable modes $\phi_j$ of the linearized operator around $U_n$ with the convention that $\phi_{N_n}=\Lambda U_n$, see Proposition \ref{self profile}. We introduce their localization $\varphi_j=\phi_j \chi(x/R)$ for $\chi $ a cut-off function and a large enough $R$, and introduce
$$
V_n=\left\{v_0\in L^\infty \mbox{ radial }, \quad \int_{\mathbb R^d} v_0 \varphi_j dx =0 \quad \mbox{for }1\leq j \leq N_n-1\right\}.
$$

\begin{theorem}(Finite Lipschitz codimensional radial stability of $U_n$).\label{theorem}
For all $3\leq d \leq 9$ and $n\ge1$, there exist functions $c_j:V_n\to \mathbb R$ for $1\leq j \leq N_n-1$ that are Lipschitz with respect to the $L^\infty$ topology and with $c_j(0)=0$, such that the following hold true. For any $v_0\in V_n$ with $\| v_0\|_{L^\infty}$ small enough, the initial data
$$
u_0=U_n+v_0+\sum_{j=1}^{N_n-1}c_j(v_0)\varphi_j
$$
produces a solution to \eqref{1.1} that blows up in finite time $0<T<+\infty$ (depending on $v_0$) and can be decomposed as
$$
u(t,x)=\frac{1}{T-t} U_n\left(\frac{x}{\sqrt{T-t}} \right)+\tilde{u}(t,x)
$$
where:\\
$(\mathrm{i})$ \emph{Behaviour of the $L^\infty$ norm}: there holds the asymptotic stability of the self similar profile
\begin{equation}\label{norms}
\lim_{t\to T}(T-t)||\tilde{u}(t)||_{L^\infty}=0.
\end{equation}
In particular, the blow-up is type I.\\
$(\mathrm{ii})$ \emph{Convergence of the solution:} there exists a function $u^*$ such that $u(t,x)\to u^*(x)$ for any $x\neq 0$ as $t\to T$. For any $p\in [1,\frac 32)$, assuming $u_0\in L^p(\mathbb R^3)$, we have $u^*\in L^p(\mathbb{R}^3)$ and
\begin{equation}\label{convergences}
\lim_{t\to T}||u(t)-u^*||_{L^p}=0.
\end{equation}
$(\mathrm{iii})$ \emph{Regularity of the blow-up time}: the blow-up time $T$ is a Lipschitz function with respect to the $L^\infty$ topology, i.e. $\| T(u_0)-T(u_0')\|_{L^\infty}\lesssim \| u_0-u_0'\|_{L^\infty} \lesssim \| v_0-v_0'\|_{L^\infty}$ for two such functions $u$ and $u'$.
\end{theorem}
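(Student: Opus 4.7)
The strategy is to adapt the renormalization plus modulation approach of \cite{Glogic} and Collot et al.\ to the higher self-similar profiles $U_n$, which present additional unstable eigenmodes. I would first pass to self-similar variables by setting
$$
v(\tau,y) = (T-t)\, u(t,x), \qquad y = \frac{x}{\sqrt{T-t}}, \qquad \tau = -\log(T-t),
$$
where $T$ is a modulation parameter. The rescaled equation is
$$
\partial_\tau v = \Delta_y v - \tfrac12 y\cdot \nabla_y v - v - \nabla_y\!\cdot\!(v\,\nabla \Phi_v),
$$
for which each $U_n$ is a stationary solution. Writing $v = U_n + w$ I would obtain $\partial_\tau w = \mathcal{L}_n w + Q(w)$ with $\mathcal{L}_n$ the linearized operator, whose radial spectral structure is given by Proposition~\ref{self profile}: exactly $N_n$ non-positive eigenvalues, with $\phi_{N_n} = \Lambda U_n$ corresponding to the zero eigenvalue (the scaling direction) and $\phi_1,\dots,\phi_{N_n-1}$ strictly unstable.

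The key step is to fix the $N_n$ modulation/shooting parameters. I would absorb the neutral mode $\Lambda U_n$ by letting $T$ vary in time according to an ODE that forces $\langle w(\tau), \phi_{N_n}^\ast \rangle \equiv 0$, where $\phi_j^\ast$ are the dual eigenfunctions of $\mathcal{L}_n^\ast$. The remaining $N_n-1$ strictly unstable projections $a_j(\tau) = \langle w(\tau), \phi_j^\ast \rangle$ would be eliminated by selecting the $c_j(v_0)$. More precisely, for a fixed $v_0 \in V_n$ of small $L^\infty$ norm, I would consider the map
$$
\Psi : (c_1,\dots,c_{N_n-1}) \longmapsto (a_1(\tau_\ast),\dots,a_{N_n-1}(\tau_\ast))
$$
evaluated at a suitable exit time $\tau_\ast$, where the initial data is $u_0 = U_n + v_0 + \sum c_j \varphi_j$. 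A Brouwer/topological shooting argument, using that the $a_j$-system is to leading order a diagonal ODE with positive eigenvalues $\lambda_j>0$, forces the existence of at least one zero of $\Psi$; the derivative $D_c \Psi$ is, up to exponentially small error, the diagonal matrix with entries $e^{\lambda_j \tau_\ast}$, so uniqueness and Lipschitz dependence on $v_0$ follow from a transversality estimate on $\Psi$.

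Once the unstable directions have been canceled, the orthogonal part $w^\perp$ is controlled through a bootstrap exploiting the spectral gap of $\mathcal{L}_n$ in the weighted energy norm naturally attached to the parabolic neighbourhood of the singularity, yielding $\|w(\tau)\|_{L^\infty} \to 0$ as $\tau \to \infty$. Translating back to the original variables gives item (i). Item (ii) would be obtained by combining interior parabolic regularity of $u$ away from the blow-up point, which yields pointwise convergence to some $u^\ast$ on $\mathbb R^d\setminus\{0\}$, with quantitative $L^p$ estimates in the inner self-similar zone based on the decay of $\|w\|$: for $p<d/2$ the self-similar singular part $(T-t)^{-1} U_n(\cdot/\sqrt{T-t})$ is uniformly bounded in $L^p$ and admits an $L^p$ limit, so a dominated convergence argument concludes. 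For item (iii), the blow-up time $T$ is defined implicitly through the orthogonality condition $\langle w, \phi_{N_n}^\ast\rangle \equiv 0$, and its Lipschitz regularity follows from the same transversality estimate used for $\Psi$.

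The principal obstacle is the $(N_n-1)$-dimensional shooting when $N_n \geq 2$. Continuity of $\Psi$ and the existence of a fixed point via Brouwer is standard, but upgrading to the Lipschitz estimates required in item (iii) and in the functions $c_j$ forces one to propagate Lipschitz dependence on initial data through the full nonlinear trajectory and through the topological selection, uniformly in the small parameter $\|v_0\|_{L^\infty}$. This rests on sharp exponentially weighted in $\tau$ coercivity of $\mathcal{L}_n$ on the stable subspace, which must be formulated on the codimension-$N_n$ orthogonal complement selected by the truncated eigenfunctions $\varphi_j = \phi_j \chi(\cdot/R)$ rather than by the $\phi_j$ themselves. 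I would close this gap by taking $R$ large enough so that the spectral projections defined via $\varphi_j$ differ from those defined via $\phi_j$ by an amount negligible compared with the spectral gap, thereby preserving the coercivity needed for the bootstrap.
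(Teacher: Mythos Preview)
Your broad architecture---self-similar renormalization, modulation to enforce orthogonality, Brouwer shooting on the genuinely unstable directions, and a bootstrap driven by a spectral gap---matches the paper. However, several concrete points diverge from or are missing compared to the paper's execution.

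\textbf{Partial mass transformation.} The paper does not work with \eqref{1.1} directly. It first passes to the reduced mass $w(t,r)=\frac{1}{2r^3}\int_0^r u(t,\tilde r)\tilde r^2\,d\tilde r$, which converts the system into a \emph{local} semilinear heat equation in $\mathbb R^5$ (equation \eqref{main}). The linearized operator $L_n$ in this framework is self-adjoint in a weighted $L^2_{\rho_n}$ space, with compact resolvent; the spectral gap \eqref{spectral gap} is then immediate. Your proposal works with the original nonlocal system and invokes dual eigenfunctions $\phi_j^\ast$ of $\mathcal L_n^\ast$, which signals a non-self-adjoint setting; obtaining the needed coercivity there is genuinely harder and is precisely what the partial mass trick sidesteps. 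Without this step, the weighted energy bootstrap you describe is not obviously closeable.

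\textbf{Eigenvalue of $\Lambda U_n$.} You call $\Lambda U_n$ a ``neutral mode'' with zero eigenvalue. In the paper's convention $L_n\Lambda\Phi_n=-\Lambda\Phi_n$, i.e.\ it is unstable with growth rate $1$ (time-translation of the blow-up time), not neutral. The modulation of $\lambda$ (equivalently $T$) via orthogonality is correct, but the reasoning you give for it is off.

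\textbf{Lipschitz regularity.} You propose to extract Lipschitz dependence of $c_j$ and $T$ from the derivative $D_c\Psi$ of the shooting map at the exit time $\tau_\ast$. But $\tau_\ast$ depends on the initial data, and a Brouwer-selected zero is a priori not unique or differentiable. The paper does not differentiate the shooting map. Instead (Subsection~4.9) it takes two trapped solutions, writes the evolution for the differences $\triangle\varepsilon$, $\triangle a_j$, $\triangle\lambda$, proves a difference energy estimate analogous to \eqref{Eys}, and then integrates the unstable ODEs for $\triangle a_j$ \emph{forward} on $[s_0,\infty)$: boundedness of $\triangle a_j$ as $s\to\infty$ forces the coefficient of the diverging exponential to vanish, yielding $|\triangle a_j(0)|\lesssim\|\triangle\varepsilon(0)\|_{L^2_\rho}$. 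The Lipschitz bound on $T$ then follows by integrating the difference modulation equation for $\lambda$. This also gives uniqueness of the $a_{j,0}$ a posteriori.

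\textbf{Bootstrap norms.} The paper's bootstrap uses, beyond the weighted $L^2_\rho$ and $L^\infty$ norms, an additional weighted bound $\|y^2 v/(1+e^{-s}y^2)\|_{L^\infty}$ and a bound on $\|y\cdot\nabla v\|_{L^\infty}$ (Lemmas~\ref{egytp} and~\ref{acsf}), closed via parabolic comparison outside a large ball. These are what allow the passage from the partial mass back to $u$ and the pointwise/$L^p$ convergence in (ii); your sketch of (ii) is in the right spirit but omits this ingredient.
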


\begin{remark}
\begin{itemize}
\item There are $N_n$ instable modes, and the codimension of the set of Theorem \ref{theorem} is $N_n-1$. This is because the eigenmode $\phi_{N_n}=\Lambda U_n$ corresponds to an instability by change of the blow-up time, but which is not an instability of the blow-up pattern.
\item The profile $U_1$ has from numerical simulations $N_1=2$ instable modes \cite{Michael}. So if $N_1=2$ holds rigorously the set of Theorem \ref{theorem} is a hypersurface.
\item The profile $U_*$ mentioned in Subsection \ref{subsec:patterns} has from numerical simulations $N_*=2$ instable modes \cite{Michael}. So again, if $N_*=2$ holds rigorously the set of Theorem \ref{theorem} is a hypersurface.
\end{itemize}
\end{remark}

\subsection{Novelties}

Our result extends the stability result of \cite{Glogic} of $U_0$ to all radial self-similar profiles. We obtain the stability of $U_0$ in a larger space of initial data (see Remark \ref{remark1}). In particular, Theorem \ref{theorem-1} and Theorem \ref{theorem} shows that any of these radial self-similar profiles can be the blow-up profile of a solution starting from $C^\infty_c$ initial data. Our methods are different than the spectral ones of \cite{Glogic}, and we rely on weighted energy estimates to control the flow in parabolic variables. We expand the analysis of \cite{Collot-Pierre-2019}, with one novelty of using additional sharp weighted $L^\infty$ estimates to simplify the control of nonlinear terms. As an outcome, this refinement of the analysis of \cite{Collot-Pierre-2019} can address further regularity issues, such as the Lipschitz regularity of the blow-up time and of the stable manifold of Theorem \ref{theorem}, and the regularity of the limit at blow-up time $u^*$.

\subsection*{Organization\ of\ the\ paper.} We prove the result for $d=3$ to ease the notation. Our proof adapts directly to other dimensions $d\geq 4$, as our arguments are valid in all dimensions $d\geq 3$. We prove Theorem \ref{theorem-1} and Theorem \ref{theorem} simultaneously, by studying a perturbation of $U_n$ so that these theorems correspond to the cases $n=0$ and $n\geq 1$ respectively.

In section \ref{section2}, we reformulate \eqref{1.1} as a local semilinear heat equation in $\mathbb{R}^5$ by introducing the reduced mass. In section \ref{section3}, by the results in \cite{Michael,Glogic}, we study the spectral gap for the linearized operator corresponding to \eqref{transform} around $\Phi_n$, Additionally, we derive the asymptotic behaviors of the eigenvectors corresponding to negative eigenvalues for this operator. In section \ref{section4}, we first give the geometrical decomposition of the solutions around self-similar profiles. Then, through a bootstrap analysis, we establish that by selecting suitable initial data, there exists a regime wherein the solutions remain trapped indefinitely. This constitutes the proof of Theorem \ref{theorem}. Lastly, in the Appendix, we present the coercivity estimate utilized in Section \ref{section4}.

\section{Notations}\label{section2}

\subsection{The partial mass framework}

We denote $|x|$ by $r$ and define the so-called reduced mass as follows:
\begin{equation}\label{transfrom}
w(t, r):=\frac{1}{2 r^{3}} \int_{0}^{r} u(t,\tilde{r}) \tilde{r}^{2} d \tilde{r}.
\end{equation}
The system \eqref{1.1} can be rewritten in
the form of a single local semilinear heat equation in $w$:
\begin{equation}\label{main}
\left\{\begin{aligned}
&\partial_t w= \Delta w+\Lambda^{\prime} w^{2}+6 w^{2}, \\
&w(0, \cdot)=w_{0},
\end{aligned} \quad \text { in } \mathbb{R}^5 .\right.
\end{equation}
Here
$$
\Lambda^{\prime} w(x):=x \cdot \nabla w(x),
$$
and $ w_{0}(r)=\frac{1}{2 r^{3}} \int_{0}^{r} {u}_{0}(\tilde{r}) \tilde{r}^{2} d \tilde{r}$. Furthermore, the self-similar solution \eqref{gmfs} transforms into
$$
w_T(t, x):=\frac{1}{T-t} \Phi\left(\frac{x}{\sqrt{T-t}}\right)\ \ \text{where}\ \ \Phi(r)=\frac{1}{2 r^{3}} \int_{0}^{r} U(\tilde{r}) \tilde{r}^{2} d \tilde{r},
$$
and $\Phi(x)$ is  the self-similar profile of \eqref{main} satisfying
\begin{equation}\label{transform}
 \Delta \Phi-\frac{1}{2}\Lambda\Phi+6\Phi^2+\Lambda^{\prime}\Phi^2=0,
\end{equation}
where operator $\Lambda$ defined in \eqref{operator11}.
The following result is due to \cite{Herrero-1998,Michael}
\begin{lemma}\label{self-similar profile}
Let $3\leq d \leq 9$. There exists a countable family of positive smooth radially symmetric self-similar profiles $\{\Phi_n\}_{n\ge0}$ of \eqref{main}, where $\Phi_0(x)=\frac{2}{2+|x|^2}$,
%\begin{equation}\label{smallx}
%|\Phi_n(x)|<\infty,\ \ \text{as}\ |x|\to 0
%\end{equation}
%and
\begin{equation}\label{largex}
\Phi_n(x)\sim\frac{\tilde{c}_n}{|x|^2} \quad \text{as}\ |x|\to +\infty, \qquad \tilde{c}_n\in(0,2].
\end{equation}
\end{lemma}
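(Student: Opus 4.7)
The plan is to reduce \eqref{transform} to the radial ODE in $\mathbb{R}^5$,
\begin{equation*}
\Phi''(r) + \frac{4}{r}\Phi'(r) - \frac{r}{2}\Phi'(r) - \Phi(r) + 2r\Phi(r)\Phi'(r) + 6\Phi(r)^2 = 0, \qquad r>0,
\end{equation*}
supplemented with $\Phi'(0)=0$ (smoothness at the origin) and $\Phi(0)=\alpha>0$ (shooting parameter). For $\alpha=1$, direct substitution verifies that $\Phi_0(r)=2/(2+r^2)$ is a solution; equivalently, the elementary antiderivative $\int_0^r U_0(\tilde r)\tilde r^2\,d\tilde r = 4r^3/(2+r^2)$ in dimension $d=3$ shows that the partial-mass transform \eqref{transfrom} of the explicit Herrero--Medina--Vel\'azquez profile $U_0$ equals exactly $\Phi_0$.

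For $n\geq 1$, I would follow the shooting strategy of \cite{Herrero-1998,Michael}. The local Cauchy problem admits a unique smooth solution $\Phi_\alpha$, which falls into one of three classes: it vanishes at finite radius, it blows up in finite $r$, or it remains positive and bounded for all $r>0$. Candidates for self-similar profiles belong to the third class. To isolate them, I would carry out an asymptotic expansion at infinity: the linearised equation admits two branches, a fast one of order $r^{-\beta}e^{r^2/4}$ and a slow one of order $c/r^2$, and the admissible values of $\alpha$ are those that suppress the fast branch entirely, a codimension one condition.

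The main obstacle is to show that countably many $\alpha_n$ meet this condition. I would recast the ODE as an autonomous planar system through the change of variables $\rho=\log r$, $X(\rho)=r^2\Phi(r)$, $Y(\rho)=r^3\Phi'(r)$, and study the linearisation at the equilibrium associated with the slow decay $\Phi\sim 2/r^2$, i.e.\ $(X,Y)=(2,-4)$. Its eigenvalues have negative real part for every $d\geq 3$, but they form a complex conjugate pair (spiral) precisely when $3\leq d \leq 9$, and become real (node) for $d\geq 10$; this dimensional transition plays the same role as the Joseph--Lundgren exponent for the Lane--Emden equation. In the spiral regime the stable manifold of $(2,-4)$ winds around the equilibrium infinitely often, intersecting transversely the one-parameter shooting curve and thereby producing the countable family $\{\alpha_n\}_{n\geq 0}$. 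Each such trajectory terminates at a point $(\tilde c_n,-2\tilde c_n)$, which by the change of variables is the asymptotic behaviour $\Phi_n(r)\sim \tilde c_n/|x|^2$ with $\tilde c_n\in(0,2]$, the upper bound reflecting the confinement of the spiral region; positivity of $\Phi_n$ is preserved along each trajectory since it remains in the quadrant $X\geq 0$ throughout.
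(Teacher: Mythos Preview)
The paper does not actually prove this lemma; it merely attributes it to \cite{Herrero-1998,Michael} (see the sentence immediately preceding the statement). Your proposal is therefore not a comparison target so much as an attempted reconstruction of the cited arguments. The overall strategy you describe---shooting from the origin, analysing fast versus slow decay at infinity, and exploiting a spiral linearisation that occurs precisely for $3\le d\le 9$---is indeed the mechanism behind the result. The verification of $\Phi_0$ and the radial ODE you wrote down are both correct.

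However, several technical points are inaccurate. First, the change of variables $\rho=\log r$, $X=r^2\Phi$, $Y=r^3\Phi'$ does \emph{not} render the self-similar ODE autonomous: the terms $-\Phi-\tfrac{r}{2}\Phi'$ coming from $-\tfrac12\Lambda\Phi$ contribute $-r^2X-\tfrac12 r^2 Y$ to $dY/d\rho$, and these cannot be expressed in $(X,Y)$ alone. The planar autonomous system arises only for the \emph{stationary} (near-field) limit $\Delta\Phi+6\Phi^2+\Lambda'\Phi^2=0$, and it is this reduced system whose spiral structure drives the oscillation count in \cite{Herrero-1998}. Second, the nontrivial equilibrium of that stationary system is at $(X,Y)=(1,-2)$, corresponding to the singular steady state $\Phi_{\mathrm{sing}}=1/r^2$ (partial mass of $U_{\mathrm{sing}}=2(d-2)/|x|^2$), not $(2,-4)$; a direct computation gives Jacobian eigenvalues $\tfrac{-1\pm i\sqrt 7}{2}$ at $(1,-2)$ in the $d=3$ case, confirming the spiral. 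Third, your sentence ``each such trajectory terminates at a point $(\tilde c_n,-2\tilde c_n)$'' is internally inconsistent: in a stable spiral every nearby orbit tends to the \emph{same} equilibrium. The distinct constants $\tilde c_n$ are not equilibria of the planar system; they are determined by the far-field matching of the full non-autonomous equation, where the slow branch $\Phi\sim c/r^2$ admits a one-parameter family and the global shooting selects the value $c=\tilde c_n$ for each admissible $\alpha_n$. The countability of $\{\alpha_n\}$ comes from counting, as $\alpha\to\infty$, the number of intersections of $\Phi_\alpha$ with $\Phi_{\mathrm{sing}}$ in the inner region---this is what the spiral governs---and then showing by a topological argument that successive oscillation counts each contain an admissible shooting value.
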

%We have the following corollary by \eqref{transfrom}.
%\begin{corollary}
%There exist a countable family of smooth radially symmetric self-similar profile $\{U_n\}_{n\ge0}$ of \eqref{1.1} where
%\begin{equation}\label{profile}
%U_n(x)=6{\Phi}_n+2x\cdot\nabla{\Phi}_n,\ \ U_0(x)=\frac{4(6+|x|^2)}{(2+|x|^2)^2}.
%\end{equation}
%\end{corollary}

\subsection{Notations}
\emph{Weighted spaces.}
We define the derivation operator
$$
D^k:= \begin{cases}\Delta^m, & \text { for } k=2 m, \\ \nabla \Delta^m, & \text { for } k=2 m+1.\end{cases}
$$
We define the scalar product
\begin{equation}\label{scact}
(f,g)_{\rho_n}=\int_{\mathbb{R}^5}f(x)g(x)\rho_n(x)dx,\ \ \rho_n(x)=\tilde{\rho}_n(x)e^{-|x|^2/4},
\end{equation}
where
$$
\tilde{\rho}_n(r)=\exp\left(\int_0^r2\tilde{r}\Phi_n(\tilde{r})d\tilde{r}\right)
,\ \tilde{\rho}_0(x)=\Phi_0(x)^{-2},
$$
 and let $L^2_{\rho_n}$
 be the corresponding weighted $L^2$ space. We let $H^k_{\rho_n}$  be the completion of $C^\infty_c(\mathbb{R}^5)$ for the norm
$$
\|u\|_{H_{\rho_n}^k}=\sqrt{\sum_{j=0}^k\left\|D^j u\right\|_{L_{\rho_n}^2}^2}.$$
%We also set
%$$
%\mathcal{H}:=\{f\in L^2_{\rho_n}(\mathbb{R}^5):f\ \text{is radial}\},
%$$
%We define the inner product
%$$
%\langle f,g\rangle_{\rho_n}=\int_{\mathbb{R}^5}f(x)\bar{g}(x){\rho_n}(x)dx.
%$$
%We then define
%$$
%||f||_{\mathcal{H}}=||f||_{L^2_{\rho_n}}=\left(\int_{\mathbb{R}^5}|f(x)|^2\rho_n(x)dx\right)^{\frac{1}{2}}.
%$$
\emph{Linearized operator.}
The scaling semi-group on functions $u: \mathbb{R}^3\to \mathbb{R}$:
$$
u_\lambda(x):=\lambda^2u(\lambda x)
$$
has for infinitesimal generator the linear operator
\begin{equation}\label{operator11}
\Lambda u:=2u+x\cdot \nabla u=\left.\frac{\partial}{\partial \lambda}(u_\lambda)\right.|_{\lambda=1}.
\end{equation}
We define the linearized operator corresponding to \eqref{transform} around $\Phi_n$ by
\begin{equation}\label{operator}
L_n:=-\Delta+\frac{1}{2} \Lambda-2 \Lambda^{\prime}(\Phi_n \cdot)-12 \Phi_n,
\end{equation}
and we equip $L_n$ with domains $$\mathcal{D}(L_n):=C^{\infty}_{c,rad}(\mathbb{R}^5)\subset H^2_{\rho_n}(\mathbb{R}^5).$$
\emph{General notations.}
Let $\chi(x)$ denote a smooth radially symmetric function with
\begin{equation}\nonumber
\chi(x):= \begin{cases}1 & \text { for } \quad|x| \leq \frac{1}{4} \\ 0 & \text { for } \quad|x| \geq \frac{1}{2}\end{cases}
\end{equation}
and for $A>0$ we define
$$
\chi_A(x)=\chi\left(\frac{x}{A}\right).
$$
Let $B_R$ denote the ball of radius $R > 0$ centered at the origin in $\mathbb{R}^5$.
We use the notation $a \lesssim b$ if $a\le Cb$ for an independent constant $C$. The constant $C(\cdot)$ denotes the positive constant depends on ``$\cdot$'', and it may vary from line to line.

\section{Eigenfunctions and spectral gap in weighted norms}\label{section3}
The purpose of this chapter is to obtain a spectral gap for the linearized operator corresponding to \eqref{transform} around $\Phi_n$:
$$
L_n:=-\Delta+\frac{1}{2} \Lambda-2 \Lambda^{\prime}(\Phi_n \cdot)-12 \Phi_n.
$$
By \eqref{scact} and \eqref{coercivity}, and considering the local compactness of the Sobolev embeddings $H^1_{\rho_n}(\mathbb{R}^5)\hookrightarrow L^2_{\rho_n}(\mathbb{R}^5)$, we know  that $L_n$ is self adjoint with respect to the $L^2_{\rho_n}$ scalar product and possesses a compact resolvent.

We are first concerned about the asymptotic behavior of the eigenfunctions with negative eigenvalues for the operator $L_n$. We decompose $L_n$ as follows: $$L_n=L_n^\infty+\tilde{L}_n$$ where
\begin{equation}\label{swize}
L_n^\infty=-\Delta+\frac{1}{2}\Lambda,\quad \mbox{and}\quad
\tilde{L}_n=-2\Lambda'(\Phi_n\cdot)-12\Phi_n.
\end{equation}
Given $\lambda<0$ and $r_0>0$ sufficiently large, we define $X_{r_0}$ as the space of functions on $(r_0,+\infty)$ such
that the following norm is finite
\begin{equation}\label{swiland}
||w||_{X_{r_0}}=\sup_{r\ge r_0}\left(r^{2(2-\lambda)}|w|+r^{2(2-\lambda)+1}|\partial_rw|\right).
\end{equation}

\begin{lemma}\label{ODE} The following holds for $\lambda<0$.

\noindent $(\mathrm{i})$ Basis of fundamental solutions:
the equation
$$
(L_n^\infty-\lambda)(u)=0,
$$
has two fundamental solutions with the following asymptotic behavior as $r\to \infty$:
\begin{equation}\label{sap}
u_1(r)= r^{2(\lambda-1)}(1+O(r^{-2})) \quad \mbox{and} \quad u_2(r)=r^{-2\left(\frac{3}{2}+\lambda\right)}e^{\frac{r^2}{4}}(1+O(r^{-2})).
\end{equation}
$(\mathrm{ii})$ Continuity of the resolvent: The inverse
$$
\tau{(f)}=\left(\int_r^{+\infty}fu_2{r'}^4e^{-\frac{{r'}^2}{4}}dr'\right)u_1-\left(\int_r^{+\infty}fu_1{r'}^4e^{-\frac{{r'}^2}{4}}dr'\right)u_2
$$
satisfies
$$
(L_n^\infty-\lambda)(\tau{(f)})=f
$$
and
\begin{equation}\label{motors}
||\tau{(f)}||_{X_{r_0}}\lesssim \sup_{r\ge r_0}r^{2(2-\lambda)}|f|.
\end{equation}
\end{lemma}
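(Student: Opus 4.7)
The plan is to analyze the radial ODE. For $u$ radial in $\mathbb R^5$ the equation $(L_n^\infty-\lambda)u=0$ reduces to
$$-u_{rr}(r)+\left(\frac{r}{2}-\frac{4}{r}\right)u_r(r)+(1-\lambda)u(r)=0,$$
which after multiplication by $\mu(r)=r^4e^{-r^2/4}$ is in Sturm--Liouville form, so that the Wronskian $\mu(u_1u_2'-u_2u_1')$ is constant. Formal asymptotics are immediate: a power ansatz $u\sim r^\alpha$ balances the transport term with the zeroth-order term and gives $\alpha=2(\lambda-1)$, while the substitution $u=v(r)e^{r^2/4}$ produces $v_{rr}+(\tfrac r2+\tfrac 4r)v_r+(\tfrac 32+\lambda)v=0$, whose dominant balance $\tfrac r2 v_r+(\tfrac 32+\lambda)v\approx 0$ at infinity yields $v\sim r^{-2(3/2+\lambda)}$.

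To make part (i) rigorous, I write $u_1(r)=r^{2(\lambda-1)}(1+w_1(r))$, derive the linear equation for $w_1$, and recast it as a Volterra integral equation by integrating twice from $+\infty$. A contraction argument on the Banach space of $C^1$ functions on $[r_0,\infty)$ endowed with the norm $\sup_{r\ge r_0}(r^2|w|+r^3|w'|)$, for $r_0$ large enough, then produces a unique $w_1$ of arbitrarily small norm, yielding the stated $O(r^{-2})$ correction. The same scheme applied to $u_2(r)=r^{-2(3/2+\lambda)}e^{r^2/4}(1+w_2(r))$ produces $u_2$; alternatively, once $u_1$ is known, reduction of order gives $u_2=u_1\int^r \mu(r')^{-1}u_1(r')^{-2}\,dr'$ and the asymptotics follow from Watson's lemma applied to the resulting Gaussian-like integral.

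For part (ii), the formula for $\tau(f)$ is (up to a constant normalization fixed by the explicit value of the Wronskian, which I compute from the leading asymptotics in (i)) the standard variation-of-parameters formula, and the identity $(L_n^\infty-\lambda)\tau(f)=f$ is verified by direct differentiation: using that $u_1,u_2$ solve the homogeneous equation, the contributions of the integrals reduce to $-(A'u_1'+B'u_2')$ with $A,B$ the integrals, while the boundary terms $-fu_1u_2\mu+fu_1u_2\mu$ cancel. The integrals converge at $+\infty$ because $f$ decays polynomially and the exponential growth of $u_2$ is absorbed by the factor $e^{-r'^2/4}$ in the measure. For the estimate \eqref{motors}, let $M=\sup_{r\ge r_0}r^{2(2-\lambda)}|f(r)|$. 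The asymptotics of $u_1,u_2$ give: the first integrand is $O(Mr'^{-3})$, so that integral is $O(Mr^{-2})$ and multiplication by $u_1\sim r^{2\lambda-2}$ yields $O(Mr^{-2(2-\lambda)})$, exactly the desired size; the second integrand is $O(Mr'^{4\lambda-2}e^{-r'^2/4})$, and Watson's lemma gives an integral of size $O(Mr^{4\lambda-3}e^{-r^2/4})$ which, multiplied by $u_2$, is $O(Mr^{2\lambda-6})$, strictly subleading. The bound on $\partial_r\tau(f)$ follows analogously: differentiating under the integral the boundary contributions cancel, leaving $Au_1'+Bu_2'$, which is estimated by the same asymptotic analysis.

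The main technical obstacle is the sharp construction of $u_2$: the exponential factor $e^{r^2/4}$ forces the contraction (or Watson-type) argument to capture the $O(r^{-2})$ subleading correction precisely, and this same sharpness is what makes the $u_2$-contribution to \eqref{motors} gain an additional factor $r^{-2}$ over the $u_1$-contribution, ensuring both pieces fit into the norm $\|\cdot\|_{X_{r_0}}$.
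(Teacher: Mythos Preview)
Your proposal is correct, and for part (ii) it is essentially identical to the paper's argument: both compute the Wronskian $W=Cr^{-4}e^{r^2/4}$ from the Sturm--Liouville form, write down the variation-of-parameters formula, and estimate the two integrals separately using the asymptotics of $u_1,u_2$ to obtain \eqref{motors}. Your observation that the $u_2$-contribution gains an extra $r^{-2}$ is exactly what the paper's direct computation shows.

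The genuine difference lies in part (i). The paper does \emph{not} construct $u_1,u_2$ by a contraction argument on the remainder $w_1$; instead it performs the change of variables $u(r)=y^{-3/2}\phi(y)$, $y=r^2$, followed by $\phi(y)=\nu(z)$, $z=y/4$, which transforms $(L_n^\infty-\lambda)u=0$ into Kummer's confluent hypergeometric equation
$$
z\nu''+(b-z)\nu'-a\nu=0,\qquad b=-\tfrac12,\quad a=-\tfrac12-\lambda,
$$
and then quotes the classical asymptotics of the Kummer function $M(a,b,z)$ and the Tricomi function $U(a,b,z)$ at $z\to+\infty$. This gives \eqref{sap} in one stroke, with no fixed-point machinery. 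Your approach is more self-contained (no appeal to special-function tables) and would work equally well for operators that do not reduce exactly to a classical equation; the paper's approach is shorter because it outsources the asymptotic analysis to the literature on confluent hypergeometric functions. Both yield the same $O(r^{-2})$ remainder, and both feed identically into the resolvent estimate of part (ii).
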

\begin{proof}
\emph{$\mathbf{Step\ 1}.$}\  Basis of homogeneous solutions.
 We define the change of variable:
\begin{equation}\label{ysnn}
u(r)=\frac{1}{y^{\frac{3}{2}}}\phi(y),\qquad y=r^2.
\end{equation}
We compute that
\begin{equation}\nonumber
\begin{aligned}
L_n^\infty(u)=-\frac{4}{y^{\frac{3}{2}}}\left\{y\phi''(y)-\left(\frac{1}{2}+\frac{y}{4}\right)\phi'(y)+\frac{1}{8}\phi(y)\right\}
.
\end{aligned}
\end{equation}
We next set
$$
\phi(y)=\nu(z),\qquad z=\frac{y}{4}.
$$
Then we compute that
$$
L_n^\infty(u)=-\frac{1}{y^{\frac{3}{2}}}\left\{z\nu''(z)-\left(\frac{1}{2}+z\right)\nu'(z)+\frac{1}{2}\nu(z)\right\}
.
$$
Therefore, $L_n^\infty(u)=\lambda u$ if and only if
\begin{equation}\label{fervex}
z\frac{d^2\nu}{dz^2}+(b-z)\frac{d\nu}{dz}-a\nu=0,
\end{equation}
where $b=-1/2$ and $a=-1/2-\lambda$. The equation \eqref{fervex} is known as Kummer's equation and has been well studied (see, e.g. \cite{Olver-Lozier},). If the parameter $a$ is not a negative integer, which holds in particular for our choice of $a$ because $\lambda<0$, then the basis of solutions to Kummer's equation consists of the Kummer function $M(a,b,z)$ and the Tricomi function $U(a,b,z)$. Therefore, $\phi(y)$ is a linear combination of the special functions  $M(a,b,z)$ and $U(a,b,z)$, whose asymptotic at infinity are given by:
$$
M(a,b,z)=\frac{\Gamma(b)}{\Gamma(a)}z^{a-b}e^z(1+O(z^{-1})),\ \ U(a,b,z)= z^{-a}(1+O(z^{-1}))\ \ \text{as}\ z\to+\infty.
$$
%A non zero contribution of $M(a,b,z)$ to $u(r)$ yields the following asymptotic
%$$
%u(r)\sim r^{-2\lambda-3}e^{\frac{r^2}{4}},\ \ \text{as}\ r\to+\infty,
%$$
%which contradicts the fact that $u\in H^1_\rho(1,+\infty)$. Hence
%$$
%\phi(y)=\nu(z)=U(a,b,z).
%$$
Then  we conclude the proof of \eqref{sap} by \eqref{ysnn}.

\medskip

\noindent \emph{$\mathbf{Step\ 2}.$}\  Estimate on the resolvent. The Wronskian
$$
W:=u_1'u_2-u_2'u_1
$$
satisfies
$$
W'=\left(\frac{r}{2}-\frac{4}{r}\right)W,\ \ W=\frac{C}{r^4}e^{\frac{r^2}{4}},
$$
where we may without loss of generality assume $C=1$. Next we solve
$$
(L_n^\infty-\lambda)(w)=f.
$$
By the variation of constants, we obtain
$$
w=\left(a_1+\int_r^{+\infty}fu_2{(r')}^4e^{-\frac{{r'}^2}{4}}dr'\right)u_1+\left(a_2-\int_r^{+\infty}fu_1{(r')}^4e^{-\frac{{r'}^2}{4}}dr'\right)u_2.
$$
Then $\tau(f)$ satisfies
$$
(L_n^\infty-\lambda)(\tau(f))=f
$$
with corresponding the choice of $a_1=a_2=0$.

We next estimate the asymptotic behavior of $\tau(f)$. For $r\ge r_0$ large enough, we have
\begin{equation}\nonumber
\begin{aligned}
&r^{2(2-\lambda)}|\tau(f)|\\
&=r^{2(2-\lambda)}\left|\left(\int_r^{+\infty}fu_2{r'}^4e^{-\frac{{r'}^2}{4}}dr'\right)u_1-\left(\int_r^{+\infty}fu_1{r'}^4e^{-\frac{{r'}^2}{4}}dr'\right)u_2\right|\\
&\lesssim r^2\left(\int_r^{+\infty}|f|{r'}^{1-2\lambda}dr'\right)+r^{1-4\lambda}e^{\frac{r^2}{4}}\left(\int_r^{+\infty}|f|{r'}^{2(\lambda+1)}e^{-\frac{{r'}^2}{4}}dr'\right)\\
&\lesssim \sup_{r\ge r_0}r^{2(2-\lambda)}|f|\left\{\left(\int_r^{+\infty}\frac{dr'}{r'^3}\right)r^2+
r^{1-4\lambda}e^{\frac{r^2}{4}}\left(\int_r^{+\infty}{r'}^{2(2\lambda-1)}e^{-\frac{{r'}^2}{4}}dr'\right)\right\}\\
&\lesssim \sup_{r\ge r_0}r^{2(2-\lambda)}|f|,
\end{aligned}
\end{equation}
and
\begin{equation}\nonumber
\begin{aligned}
&r^{2(2-\lambda)+1}|\partial_r\tau(f)|\\
&=r^{2(2-\lambda)+1}\left|\left(\int_r^{+\infty}fu_2{r'}^4e^{-\frac{{r'}^2}{4}}dr'\right)\partial_ru_1-\left(\int_r^{+\infty}fu_1{r'}^4e^{-\frac{{r'}^2}{4}}dr'\right)\partial_ru_2\right|\\
&\lesssim r^2\left(\int_r^{+\infty}|f|{r'}^{1-2\lambda}dr'\right)+r^{1-4\lambda}e^{\frac{r^2}{4}}\left(\int_r^{+\infty}|f|{r'}^{2(\lambda+1)}e^{-\frac{{r'}^2}{4}}dr'\right)\\
&\lesssim \sup_{r\ge r_0}r^{2(2-\lambda)}|f|\left\{\left(\int_r^{+\infty}\frac{dr'}{r'^3}\right)r^2+
r^{1-4\lambda}e^{\frac{r^2}{4}}\left(\int_r^{+\infty}{r'}^{2(2\lambda-1)}e^{-\frac{{r'}^2}{4}}dr'\right)\right\}\\
&\lesssim \sup_{r\ge r_0}r^{2(2-\lambda)}|f|,
\end{aligned}
\end{equation}
then \eqref{motors} is proved.
\end{proof}

We next give the asymptotic behavior of the eigenfunctions with negative eigenvalues of the operator $L_n$.
\begin{lemma}\label{taxi}
Let $\lambda<0$. The solutions to
\begin{equation}\label{viesd}
(L_n-\lambda)(u)=0, \ \ u\in H^1_{\rho_n}(1,+\infty),
\end{equation}
behave for $r\to +\infty$ as
$$
|\partial_ku(r)|\lesssim r^{{2(\lambda-1)}-k},\ \ k=0,1.
$$
\end{lemma}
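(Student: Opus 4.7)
The plan is to treat equation \eqref{viesd} as a perturbation of the solvable model $(L_n^\infty - \lambda) u = 0$ already analyzed in Lemma \ref{ODE}. Rewriting it as $(L_n^\infty - \lambda) u = -\tilde L_n u$ and using that $\Phi_n(r) \sim \tilde c_n/r^2$ with $|\Phi_n'(r)| \lesssim r^{-3}$ (Lemma \ref{self-similar profile}), one obtains the pointwise bound
$$
|\tilde L_n w(r)| \lesssim r^{-2} |w(r)| + r^{-1} |\partial_r w(r)|
$$
for $r$ large. Thus $\tilde L_n$ gains two powers of $r^{-1}$ relative to $w$, making it a subleading perturbation at infinity: precisely the amount needed so that an input of decay rate $r^{2(\lambda-1)}$ is mapped to an output of decay rate $r^{-2(2-\lambda)}$, the scale governing the space $X_{r_0}$.

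The core step is a contraction argument on $X_{r_0}$ for $r_0$ large. For each $c \in \mathbb R$, define
$$
\Psi_c(v) := -\tau\bigl(\tilde L_n(c u_1 + v)\bigr).
$$
Combining the pointwise bound on $\tilde L_n$ with the asymptotics of $u_1$ from Lemma \ref{ODE}(i) and the decay encoded in the $X_{r_0}$ norm gives
$$
\sup_{r \ge r_0} r^{2(2-\lambda)} \bigl|\tilde L_n(c u_1 + v)(r)\bigr| \lesssim |c| + r_0^{-2} \|v\|_{X_{r_0}}.
$$
Lemma \ref{ODE}(ii) then yields $\|\Psi_c(v) - \Psi_c(\tilde v)\|_{X_{r_0}} \lesssim r_0^{-2} \|v - \tilde v\|_{X_{r_0}}$, so $\Psi_c$ is a contraction for $r_0$ large enough, with a unique fixed point $v_c \in X_{r_0}$. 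The function $u_c := c u_1 + v_c$ is then a solution of \eqref{viesd} on $[r_0,\infty)$ satisfying $|u_c(r)| \lesssim |c| r^{2(\lambda-1)}$ and $|\partial_r u_c(r)| \lesssim |c| r^{2(\lambda-1)-1}$.

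To conclude, observe that \eqref{viesd} is a second-order linear ODE on $[r_0, \infty)$, so its space of solutions is two-dimensional, and the family $\{u_c\}_{c \in \mathbb R}$ accounts for only one direction. An Abel/Wronskian computation for the radial form of \eqref{viesd}, whose first-order coefficient is $4/r - r/2 + 2r\Phi_n(r) \sim -r/2$ at infinity, gives $W(u_c, u_*)(r) \sim C r^{-4} e^{r^2/4}$ for any $u_*$ linearly independent of $u_c$; combined with the polynomial decay of $u_c$ this forces $u_*(r) \sim C' r^{-3-2\lambda} e^{r^2/4}$. Such a $u_*$ cannot lie in $L^2_{\rho_n}(1, \infty)$ since, together with the weight asymptotics $\rho_n(r) \sim r^{2\tilde c_n} e^{-r^2/4}$ (derived from $\int_0^r 2\tilde r \Phi_n d\tilde r \sim 2\tilde c_n \ln r$), $|u_*|^2 \rho_n$ grows like $e^{r^2/4}$ and is not integrable. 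Hence any solution $u \in H^1_{\rho_n}(1, \infty)$ to \eqref{viesd} must coincide with some $u_c$ on $[r_0,\infty)$, giving the claimed bounds for both $u$ and $\partial_r u$. The main technical point is establishing the contraction estimate: the gain $r_0^{-2}$ rests on the decay of $\Phi_n$ precisely matching the gap between the $u_1$- and $X_{r_0}$-scales.
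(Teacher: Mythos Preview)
Your proof is correct and follows essentially the same strategy as the paper's: a Banach fixed-point argument on $X_{r_0}$ to construct a decaying solution as a perturbation of $u_1$, followed by a Wronskian computation to rule out any other $H^1_{\rho_n}$ solution. One minor imprecision: the Wronskian for the full operator $L_n-\lambda$ picks up an extra polynomial factor from $\int 2r\Phi_n\,dr \sim 2\tilde c_n \ln r$, so $W \sim C r^{-4-2\tilde c_n} e^{r^2/4}$ rather than $C r^{-4} e^{r^2/4}$ (and correspondingly $u_* \sim r^{-3-2\lambda-2\tilde c_n} e^{r^2/4}$), but this does not affect the non-integrability conclusion.
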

\begin{proof}
Formally, a  non-zero contribution of $u_2$ (see \eqref{sap}) to $u$ would yield for the following asymptotic
$$
u(r)\sim r^{-3-2\lambda}e^{\frac{r^2}{4}}\ \ \text{as}\ r\to\infty,
$$
which contradicts $u\in H^1_{\rho_n}(1,+\infty)$. Hence, we consider $u$ is a perturbation of ${u}_1$ in the following.
\\
\emph{$\mathbf{Step\ 1}.$}\  Setting up the Banach fixed point. Let $\tilde{u}_1$ such that
$$u=u_1+\tilde{u}_1$$
solves \eqref{viesd}, hence
\begin{equation}\label{viesd3}
(L^\infty_n-\lambda)\tilde{u}_1=-\tilde{L}_n(u_1+\tilde{u}_1).
\end{equation}
We define the map:
\begin{equation}\label{viesd11}
G(\tilde{u}_1)=\tau(-\tilde{L}_n(u_1+\tilde{u}_1)).
\end{equation}
We claim the following bounds: for $r_0>0$ large enough, assume that
$$
\tilde{u}_1\in X_{r_0},
$$
then
\begin{equation}\label{viesd1}
G(\tilde{u}_1)\in X_{r_0},
\end{equation}
and for $\tilde{u}_1, \tilde{u}_2\in X_{r_0},$
\begin{equation}\label{viesd2}
||G(\tilde{u}_1)-G(\tilde{u}_2)||_{X_{r_0}}<\delta||\tilde{u}_1-\tilde{u}_2||_{X_{r_0}}
\end{equation}
for some $0<\delta<1$.

Assume that \eqref{viesd1} and \eqref{viesd2} hold, by the Banach fixed point theorem, there exists a unique solution $\tilde{u}_1$ of \eqref{viesd3} such that
$u=u_1+\tilde{u}_1$
solves \eqref{viesd}. Additionally, $u\in H^1_{\rho_n}(1,+\infty)$.
\\
%By \eqref{sap} and the definition of the space $X_{r_0}$, we know that
%$$
%|\partial_ku(r)|\lesssim(r+1)^{{2(\lambda-1)}-k},\ \ k\ge0,\ \ r\to+\infty.
%$$
\emph{$\mathbf{Step\ 2}.$}\  Proof of \eqref{viesd1} and \eqref{viesd2}. Assume
$
\tilde{u}_1\in X_{r_0}.
$
By \eqref{viesd11} we have
$$
(L^\infty_n-\lambda)(G(\tilde{u}_1))=-\tilde{L}_n(u_1+\tilde{u}_1).
$$
By \eqref{motors}, there holds
$$
||G(\tilde{u}_1)||_{X_{r_0}}\lesssim \sup_{r\ge r_0}r^{2(2-\lambda)}|\tilde{L}_n(u_1+\tilde{u}_1)|.
$$
Since $\tilde{u}_1\in X_{r_0}$, combining \eqref{largex}, \eqref{swize} and \eqref{sap}, we obtain
$$
||G(\tilde{u}_1)||_{X_{r_0}}\lesssim 1,
$$
which concludes the proof of \eqref{viesd1}.

We next prove \eqref{viesd2}. For $\tilde{u}_1, \tilde{u}_2\in X_{r_0},$ by \eqref{largex} and \eqref{motors}, if we take $r_0$ large enough, then
\begin{equation}\nonumber
\begin{aligned}
&||G(\tilde{u}_1)-G(\tilde{u}_2)||_{X_{r_0}}\\
&\lesssim \sup_{r\ge r_0}r^{2(2-\lambda)}|\tilde{L}_n(\tilde{u}_1-\tilde{u}_2)|\\
&\lesssim  \sup_{r\ge r_0}r^{2(2-\lambda)}\{|r\partial_r\Phi_n(\tilde{u}_1-\tilde{u}_2)|+|r\Phi_n\partial_r(\tilde{u}_1-\tilde{u}_2)|+|\Phi_n(\tilde{u}_1-\tilde{u}_2)|\}\\
&\lesssim  \sup_{r\ge r_0}\big(r^{2(2-\lambda)}|\tilde{u}_1-\tilde{u}_2|+r^{2(2-\lambda)+1}|\partial_r\tilde{u}_1-\partial_r\tilde{u}_2|\big)\frac{1}{r^2}\\
&\le\delta||\tilde{u}_1-\tilde{u}_2||_{X_{r_0}},
\end{aligned}
\end{equation}
where $0<\delta<1$,
which completes the proof of \eqref{viesd2}.
\\
\emph{$\mathbf{Step\ 3}.$}\  Uniqueness of $H^1_{\rho_n}$ solution. Let $\bar{u}$ be another solution with $$\bar{u}\not\in \text{Span}\{u\}.$$
Assume by contradiction that $\bar{u}\in H^1_{\rho_n}$, then there holds $ W\in L^2_{\rho_n}$ where
$$
W=u\bar{u}'-u'\bar{u}.
$$
By \eqref{largex},
$$
W'=\left(\frac{r}{2}-\frac{4}{r}-2r\Phi_n\right)W,\ \ W=\frac{C}{r^{4+2c_n}}e^{\frac{r^2}{4}}\ \ \text{as}\ r\to\infty,
$$
 we thus know that $W\not\in L^2_{\rho_n},$ which yields a contradiction. The uniqueness of $H^1_{\rho_n}$ solution is proved.
\\
\emph{$\mathbf{Step\ 4}.$}\ Asymptotic behaviour. We know from Step 1 that $\tilde{u}_1\in X_{r_0}$ and $u=u_1+\tilde{u}_1$ is the unique $H^1_{\rho_n}$ solution of \eqref{viesd}.  By \eqref{swiland} and \eqref{sap}, for $k=0,1$, we obtain
$$
|\partial_ku(r)|\lesssim r^{{2(\lambda-1)}-k},
$$
which concludes the proof.
\end{proof}

Observing that the symmetry group of dilations and translations generates the explicit eigenfunction
$$
L_n\Lambda\Phi_n=-\Lambda\Phi_n,
$$
and combining Lemma \ref{taxi} with the results from \cite{Michael,Glogic} (See Theorem 5 of \cite{Michael} and Proposition 4.3 of \cite{Glogic}), we have the following proposition.

\begin{proposition}\label{self profile}
For each $n\ge0$, the operator $L_n$ has $N_n$ nonpositive eigenvalues with\\
$$
\begin{aligned}
& N_0(d=3)=1 \\
& N_0(d)=1 \text { for } d \geq 4 \text { from numerical simulations (non mathematically rigorous), } \\
& N_1(d)=2 \text { for } d \geq 3 \text { from numerical simulations (non mathematically rigorous). }
\end{aligned}
$$
\\
$(\mathrm{i})$ Eigenvalues. The spectrum of $L_n$ is given by distinct eigenvalues $\{- \mu_{j,n}\}_{1\leq j \leq N_n}\cup \{ \lambda_{0,n},\lambda_{1,n},...\}$ with
$$
\mu_{j,n}\geq 0\mbox{ for }1\leq j\leq N_n, \quad -\mu_{1,n}=-1 \quad\mbox{and}\quad 0<\lambda_{0,n}<\lambda_{1,n}<\cdots < \lambda_{j,n}\underset{j\to \infty}{\to}+\infty.
$$
$(\mathrm{ii})$ Eigenvectors. The eigenvectors corresponding to the eigenvalues $(-\mu_{j,n})_{1\le j\le N_n}$  are spherically symmetric bounded functions denoted as $\psi_{j,n}$, with
$$
||\psi_{j,n}||_{L^2_{\rho_n}}=1,\ \psi_{1,n}=\frac{\Lambda \Phi_n}{||\Lambda \Phi_n||_{L^2_{\rho_n}}}.
$$
Moreover, by Lemma \ref{taxi}, there holds as $r\to \infty$
\begin{equation}\label{taxiss}
|\partial_k\psi_{j,n}(r)|\lesssim r^{{-2(\mu_{j,n}+1)}-k},\ \ k=0,1.
\end{equation}
$(\mathrm{iii})$ Spectral gap. There exist some constants $c_n> 0$ such
that for all $u\in H^1_{\rho_n}(\mathbb{R}^5)$,
\begin{equation}\label{spectral gap}
(L_nu,u)_{\rho_n}\ge c_n||u||^2_{H^1_{\rho_n}}-\frac{1}{c_n}\sum_{j=1}^{N_n}(u,\psi_{j,n})^2_{\rho_n}.
\end{equation}
\end{proposition}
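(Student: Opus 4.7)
The plan is to combine three ingredients: the self-adjoint compact-resolvent structure of $L_n$ on $L^2_{\rho_n}$ recalled just above the statement, the spectral counting from \cite{Michael} and \cite{Glogic} that supplies the number $N_n$ of non-positive eigenvalues and their simplicity, and Lemma \ref{taxi} for the pointwise decay of radial $H^1_{\rho_n}$ solutions of $(L_n-\lambda)u=0$. The genuinely new content of the proposition is the weighted pointwise asymptotics \eqref{taxiss} and the $H^1_{\rho_n}$ spectral gap \eqref{spectral gap}; the remaining spectral information is either imported or produced by scaling.

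First, I would identify $\Lambda\Phi_n$ as an eigenfunction with eigenvalue $-1$. The one-parameter family $\lambda\mapsto \lambda^2\Phi_n(\lambda x)$ solves the rescaled self-similar profile equation for every $\lambda>0$; differentiating at $\lambda=1$ produces $L_n(\Lambda\Phi_n) = -\Lambda\Phi_n$, and normalising in $L^2_{\rho_n}$ yields $\psi_{1,n}$ with $\mu_{1,n}=1$. The remaining count of non-positive eigenvalues, their distinctness, and the displayed values of $N_n$ in item (i) are imported from Theorem 5 of \cite{Michael} and Proposition 4.3 of \cite{Glogic}. Since $L_n$ has compact resolvent, its spectrum is discrete and accumulates only at $+\infty$, yielding the ordering $0<\lambda_{0,n}<\lambda_{1,n}<\cdots$ in item (i).

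For item (ii), each eigenfunction $\psi_{j,n}$ lies in $H^1_{\rho_n}$ and satisfies $(L_n-(-\mu_{j,n}))\psi_{j,n} = 0$ with $-\mu_{j,n}\leq 0$. Applying Lemma \ref{taxi} with $\lambda=-\mu_{j,n}<0$ gives $|\partial_r^k\psi_{j,n}(r)|\lesssim r^{-2(\mu_{j,n}+1)-k}$ for $k=0,1$, which is exactly \eqref{taxiss}. Boundedness and smoothness on any compact set follow from classical radial ODE theory. The only borderline issue is the case $\mu_{j,n}=0$, which is not covered by the strict inequality in Lemma \ref{taxi}; this can be handled by repeating the same fixed-point argument with a minor modification of the norm \eqref{swiland}, or is moot once the references forbid a zero eigenvalue in the cases under consideration.

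For item (iii), I would prove the spectral gap in two steps. Let $E^\perp$ denote the $L^2_{\rho_n}$-orthogonal complement of $\mathrm{Span}(\psi_{1,n},\ldots,\psi_{N_n,n})$. The min-max principle, applied to the self-adjoint operator $L_n$ with compact resolvent, gives $(L_nu,u)_{\rho_n}\geq \lambda_{0,n}\|u\|_{L^2_{\rho_n}}^2$ for $u\in E^\perp$. The coercivity estimate \eqref{coercivity} of the Appendix controls $\|u\|_{H^1_{\rho_n}}^2$ by $C[(L_nu,u)_{\rho_n}+\|u\|_{L^2_{\rho_n}}^2]$, so absorbing the $L^2_{\rho_n}$ term via the previous inequality upgrades the bound to $(L_nu,u)_{\rho_n}\geq c\|u\|_{H^1_{\rho_n}}^2$ on $E^\perp$. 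For general $u$, I would decompose $u=\sum_{j=1}^{N_n}(u,\psi_{j,n})_{\rho_n}\psi_{j,n}+u^\perp$ and diagonalise $(L_nu,u)_{\rho_n} = -\sum_{j=1}^{N_n}\mu_{j,n}(u,\psi_{j,n})_{\rho_n}^2 + (L_nu^\perp,u^\perp)_{\rho_n}$; choosing $c_n>0$ small enough so that $c_n^{-1}$ dominates both the $\mu_{j,n}$ and the crossterms produced by the $H^1_{\rho_n}$ norm of the finite-rank part yields \eqref{spectral gap}. The main obstacle throughout is the rigorous count of $N_n$ for general $(n,d)$, which is not available beyond $(n,d)=(0,3)$; this is precisely why the main theorems are stated conditionally on the numerical input from \cite{Michael}, while the weighted spectral gap itself is purely analytic and reduces to the ingredients above.
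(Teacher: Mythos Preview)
Your proposal is correct and follows essentially the same approach as the paper. The paper itself gives no formal proof of the proposition: it merely observes, in the sentence preceding the statement, that $L_n\Lambda\Phi_n=-\Lambda\Phi_n$ from scaling and then cites Lemma \ref{taxi} together with Theorem 5 of \cite{Michael} and Proposition 4.3 of \cite{Glogic}; the self-adjointness and compact-resolvent structure are stated at the opening of Section \ref{section3}. Your write-up is thus a faithful and more detailed elaboration of exactly these ingredients, including the derivation of the $H^1_{\rho_n}$ form of the spectral gap from the $L^2_{\rho_n}$ one via the quadratic-form coercivity, which the paper leaves implicit. Your remark on the borderline case $\mu_{j,n}=0$ not being covered by the hypothesis $\lambda<0$ of Lemma \ref{taxi} is well taken; the paper does not address it either and only conjectures in the remark following the proposition that zero is never an eigenvalue.
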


\begin{remark}
We conjecture that there is no zero eigenvalue for all $n\geq 0$. For $n=0$ only $-1$ belongs to the nonpositive spectrum, and for $n=1$ only $-1$ and another negative eigenvalue, see \cite{Michael}. For large $n$, we believe that there is no eigenvalue in $(-1,0]$, which could be proved following the ideas of \cite{Collot-Pierre-2019}.
\end{remark}

\section{Dynamical control of the flow}\label{section4}

From now on, $n$ is fixed ($n\ge0$). For simplicity, we omit the $n$ subscript and write $\psi_j$, $\mu_j$, $\rho$, $N$ and $\lambda_j$ instead.

\subsection{Renormalisation}

We define the $L^\infty$ tube around the renormalized versions of $\Phi_n$:
$$
X_\delta=\bigg\{w=\frac{1}{\mu^2}(\Phi_n+v)\bigg(\frac{x}{\mu}\bigg),\  \mu>0,\ ||v||_{L^\infty}<\delta\bigg\}.
$$
\begin{lemma}\label{Implicit}(Geometrical decomposition).
 There exist  $\delta>0$  and $ C>0$  such that any  $w \in X_{\delta}$  has a unique decomposition
$$w=\frac{1}{\lambda^{2}}\left(\Phi_n+\sum_{j=2}^{N}a_j\psi_j+\varepsilon\right)\bigg(\frac{x}{\lambda}\bigg),$$
where  $\varepsilon$  satisfies the orthogonality condition
$$ \left(\varepsilon,\psi_j\right)_{\rho}=0,\ \ 1\le j\le N,$$
the parameters $\lambda$ and $a_j$ being smooth on  $X_{\delta}$, and with
$$\|\varepsilon\|_{L^{\infty}}+\sum_{j=2}^{N}|a_j| \leq C .$$
\end{lemma}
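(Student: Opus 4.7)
The plan is to exploit the scale invariance of the decomposition and reduce the problem to a single scalar equation amenable to the Banach-space implicit function theorem.

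\textbf{Reduction.} Any $w\in X_\delta$ has the form $w(x)=\mu^{-2}(\Phi_n+v)(x/\mu)$ with $\|v\|_{L^\infty}<\delta$. Setting $\lambda=\mu\tilde\lambda$, the desired decomposition becomes
$$
\tilde\lambda^2(\Phi_n+v)(\tilde\lambda x)=\Phi_n(x)+\sum_{j=2}^{N}a_j\psi_j(x)+\varepsilon(x),
$$
so I reduce to finding $(\tilde\lambda,a_2,\ldots,a_N,\varepsilon)$ as a smooth function of $v$ in an $L^\infty$ neighbourhood of $0$. Taking the $L^2_\rho$ pairing of both sides with $\psi_k$ and using $\|\psi_k\|_{L^2_\rho}=1$ together with $(\varepsilon,\psi_k)_\rho=0$, the coefficients are determined explicitly for $2\leq j\leq N$ by
$$
a_j(v,\tilde\lambda)=\bigl(\tilde\lambda^2(\Phi_n+v)(\tilde\lambda\cdot)-\Phi_n,\psi_j\bigr)_\rho,
$$
and only a single scalar equation for $\tilde\lambda$ remains,
$$
G(v,\tilde\lambda):=\bigl(\tilde\lambda^2(\Phi_n+v)(\tilde\lambda\cdot)-\Phi_n,\psi_1\bigr)_\rho=0.
$$

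\textbf{Implicit function theorem.} Because $\rho$ carries the Gaussian factor $e^{-|x|^2/4}$ and each $\psi_k$ is polynomially decaying by \eqref{taxiss}, the pairing $h\mapsto(h,\psi_k)_\rho$ is a bounded linear functional on $L^\infty(\mathbb{R}^5)$; dominated convergence then makes $G$ and the $a_j$'s jointly smooth in $(v,\tilde\lambda)$ on a neighbourhood of $(0,1)$, with $G(0,1)=0$. The key derivative is
$$
\partial_{\tilde\lambda}\bigl[\tilde\lambda^2\Phi_n(\tilde\lambda x)\bigr]\big|_{\tilde\lambda=1}=\Lambda\Phi_n(x),
$$
so by Proposition \ref{self profile} and $\psi_1=\Lambda\Phi_n/\|\Lambda\Phi_n\|_{L^2_\rho}$,
$$
\partial_{\tilde\lambda}G(0,1)=(\Lambda\Phi_n,\psi_1)_\rho=\|\Lambda\Phi_n\|_{L^2_\rho}>0.
$$
The Banach-space implicit function theorem, applied for $\delta$ small enough, then produces a unique smooth map $v\mapsto\tilde\lambda(v)$ on a small $L^\infty$ ball solving $G(v,\tilde\lambda(v))=0$, and the explicit formulas above propagate smoothness (and hence Lipschitz continuity) to the $a_j$'s and to $\varepsilon:=\tilde\lambda(v)^2(\Phi_n+v)(\tilde\lambda(v)\cdot)-\Phi_n-\sum_{j\geq 2}a_j(v)\psi_j$. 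Reinstating $\lambda=\mu\tilde\lambda(v)$ yields the claimed decomposition on $X_\delta$.

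\textbf{Uniform bounds and main subtlety.} At $v=0$ one has $\tilde\lambda=1$, $a_j=0$ and $\varepsilon=0$; by continuity, shrinking $\delta$ keeps $\tilde\lambda$ close to $1$ and each $|a_j|$ small, and the triangle inequality applied to the defining relation for $\varepsilon$ gives $\|\varepsilon\|_{L^\infty}\lesssim 1$. The main conceptual point to check is the mismatch between the $L^\infty$ topology carried by $X_\delta$ and the weighted $L^2_\rho$ pairing underlying the orthogonality conditions; this is handled by the observation that $\rho\psi_k\in L^1(\mathbb{R}^5)$ makes the relevant projection functionals continuous on $L^\infty$, after which the argument is a routine application of the implicit function theorem.
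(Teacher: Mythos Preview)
Your proof is correct and follows essentially the same approach as the paper, which simply invokes the implicit function theorem and refers to Lemma~4.2 of \cite{Collot-Pierre-2019} for the details. Your reduction to a single scalar equation for $\tilde\lambda$ via the explicit formulas for the $a_j$'s, together with the observation that $\rho\psi_k\in L^1$ makes the projection functionals $L^\infty$-continuous, are exactly the key points of that standard argument.
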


\begin{proof}
It is a standard and classical consequence of the implicit function theorem. We refer to the proof of Lemma 4.2. in \cite{Collot-Pierre-2019} whose arguments apply directly to the present case.
\end{proof}

\subsection{Description of the initial datum}. We define the functions
$$
\theta_j(r)=\frac 12 \int_r^\infty \psi_j (\tilde r)\rho(\tilde r)\tilde r d\tilde r
$$
for $1\leq j \leq N$.  They are Schwartz functions by \eqref{taxiss} (and since a similar algebraic decay can be propagated to higher order derivatives). If $\bar \varepsilon_0$ is the partial mass of a function $\bar u_0$, i.e. $\bar \varepsilon_0(r)=(2r^{3})^{-1}\int_0^r \bar u_0(\tilde r)\tilde r^2 d\tilde r$, then we have by Fubini that
$$
(\bar \varepsilon_0,\psi_j)_{\rho} = \int_{\mathbb R^3} \bar u_0 \theta_j dx.
$$
We define the set of initial perturbations whose partial mass is orthogonal to the unstable eigenmodes (except the first one generated by time translation invariance) as
\begin{align}
\label{id:def-En} E_n &= \Big\{ \bar u_0\in L^\infty(\mathbb R^3), \quad \int_{\mathbb R^3} \bar u_0 \theta_j dx=0 \mbox{ for }2\leq j\leq  N \\
\nonumber & \qquad  \Leftrightarrow (\bar \varepsilon_0,\psi_j)_{\rho}=0  \mbox{ for }2\leq j\leq  N \mbox{ for }\bar \varepsilon_0(r)=(2r^{3})^{-1}\int_0^r \bar u_0(\tilde r)\tilde r^2 d\tilde r \Big\}.
\end{align}
By convention, $E_0=L^\infty$ for $n=0$. In order to be able to produce compactly supported initial data, we pick $R>0$ and define for $1\leq j \leq N$ the localized partial mass of the unstable modes as
\begin{equation} \label{id:def-bar-psij}
\bar \psi_j(r)=\psi_j (r)\chi_R(r)-\frac{1}{r^3}\int_0^r \tilde r^3 \psi_j(\tilde r)\chi_R'(\tilde r)d\tilde r
\end{equation}
and the corresponding unstable modes in original variables
\begin{equation} \label{id:def-bar-phij}
\bar \phi_j(r) = \frac{2}{r^2}\partial_r (r^3 \psi_j) \chi_R.
\end{equation}
Note that $\bar\phi_j\in C^\infty_c(\mathbb R^3)$.

We will focus on solutions of \eqref{main} that are a suitable perturbation of $\Phi_n$ in the following sense. We first pick $\bar \lambda_0>0$ small enough. We choose $\bar u_0\in E_n$ small enough. Then we consider for $(\bar a_{j,0})_{2\leq j \leq N}$ small the initial data
\begin{equation} \label{id:initial-data-original-variables}
u_0(x)= \frac{1}{\bar \lambda_0^2}\left(U_n+\tilde u_0\right)\left(\frac{x}{\bar \lambda_0} \right)
\end{equation}
where
\begin{equation} \label{id:initial-data-original-variables-2}
\tilde u_0(y)=\bar u_0(y)+\sum_{j=2}^N \bar a_{j,0} \bar \phi_j(y).
\end{equation}
By convention, the sum is empty for $n=0$ in which case $\tilde u_0(y)=\bar u_0(y)$. We denote by $w_0(r)=(2r^{3})^{-1}\int_0^r u_0(\tilde r)\tilde r^2 d\tilde r$ its associated partial mass. By applying Lemma \ref{Implicit}, there exist $\lambda_0>0$ and $(a_{j,0})_{2\leq j \leq N}$ such that it can be rewritten under the form
\begin{equation}\label{1.2}
w_0=\frac{1}{\lambda_0^2}\left(\Phi_n+v_0\right)\left(\frac{x}{\lambda_0}\right),
\end{equation}
where
\begin{equation}\label{1.2-2}
v_0=\sum_{j=2}^{N}a_{j,0}\psi_j+\varepsilon_0
\end{equation} and
$\varepsilon_0$ is a radial function satisfying
\begin{equation}\label{orth}
\left(\varepsilon_0,\psi_j\right)_{\rho}=0,\ \ 1\le j\le N.
\end{equation}
Some properties of the above change of decomposition are stated in the following lemma.

\begin{lemma}[Change of decomposition] \label{lem:change-decomposition}
Fix $\bar \lambda_0>0$. For $R>0$, consider the map which to $(\bar u_0,(\bar a_{j,0})_{2\leq j\leq N})$ (whose corresponding initial data is \eqref{id:initial-data-original-variables}) associates $( \varepsilon_0,( a_{j,0})_{2\leq j\leq N},\lambda_0)$ (given by the decomposition \eqref{1.2}-\eqref{orth} of the partial mass of this initial data). Then this map is well defined for $\| \bar u_0\|_{L^\infty}$ and $|(\bar a_{j,0})_{2\leq j\leq N}|$ small enough. It is continuous from $E_n \times \mathbb R^{N-1}$ into $\{\varepsilon_0 \in L^\infty, \ y\cdot\nabla  \varepsilon_0 \in L^\infty\}\times \mathbb R^{N}$ equipped with its natural topology. Its second component $(\bar u_0,(\bar a_{j,0})_{2\leq j\leq N})\mapsto (( a_{j,0})_{2\leq j\leq N},\lambda_0)$ is $C^\infty$ from $E_n \times \mathbb R^{N-1}$ into $\mathbb R^n$.
Moreover, the jacobian of the second and third component of this map at $(0,0)$ satisfies
\begin{align}
& \label{id:change-decomposition-jacobian} (\partial_{\bar a_j}(a_{i,0}))_{2\leq i,j\leq N}(0,0)=\textup{Id}_{N-1}+o_{R\to \infty}(1),\\
& \label{id:change-decomposition-jacobian-3} \partial_{\bar a_j}(\lambda_{0})(0,0)=0 \qquad \mbox{for }2\leq j \leq N,\\
& \label{id:change-decomposition-jacobian-2} \partial_{\bar u_0}(a_{i,0})(0,0)=0  \qquad \mbox{for }2\leq i \leq N .
\end{align}
\end{lemma}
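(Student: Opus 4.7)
\emph{Proof proposal.} The plan is to recast the decomposition \eqref{1.2}--\eqref{orth} as a finite-dimensional scalar system, apply the implicit function theorem, and then read off the Jacobians at the origin using the orthogonality properties of the eigenbasis. An integration by parts shows that the partial mass of $\bar\phi_j$ from \eqref{id:def-bar-phij} is precisely the function $\bar\psi_j$ from \eqref{id:def-bar-psij}. Consequently the partial mass of the data $u_0$ in \eqref{id:initial-data-original-variables}--\eqref{id:initial-data-original-variables-2} reads $w_0(r) = \bar\lambda_0^{-2}(\Phi_n + f)(r/\bar\lambda_0)$ with $f := \bar\varepsilon_0 + \sum_{j=2}^N \bar a_{j,0}\bar\psi_j$, where $\bar\varepsilon_0$ is the partial mass of $\bar u_0$. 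Setting $\mu = \lambda_0/\bar\lambda_0$ and $F(\mu, f)(y) := \mu^2(\Phi_n + f)(\mu y) - \Phi_n(y)$, the decomposition \eqref{1.2}--\eqref{orth} is equivalent to $v_0 = F(\mu, f)$ together with the scalar conditions
$$G(\mu, f) := (F(\mu, f), \psi_1)_\rho = 0, \qquad a_{i,0} = (F(\mu, f), \psi_i)_\rho \text{ for } 2 \le i \le N,$$
and the residual formula $\varepsilon_0 = F(\mu, f) - \sum_{j=2}^N a_{j,0}\psi_j$.

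Existence, uniqueness, and smoothness of $(a_{j,0}, \lambda_0)$ will follow from applying the implicit function theorem to $G = 0$ near $(\mu, f) = (1, 0)$, which is the image of $(\bar u_0, (\bar a_{j,0})) = (0, 0)$. A direct differentiation gives $\partial_\mu G|_{(1,0)} = (\Lambda\Phi_n, \psi_1)_\rho = \|\Lambda\Phi_n\|_{L^2_\rho} \ne 0$, since $\psi_1 = \Lambda\Phi_n/\|\Lambda\Phi_n\|_{L^2_\rho}$. Hence $\mu$ is $C^\infty$ in $f \in L^2_\rho$ near $0$; then so are $a_{j,0}$ and $\lambda_0 = \bar\lambda_0 \mu$, and since $f$ depends linearly and continuously on $(\bar u_0, (\bar a_{j,0}))$, the second component of the map is $C^\infty$. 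For the regularity of $\varepsilon_0$, note that $u_0 \in L^\infty$ yields $w_0 \in L^\infty$ (from $|w_0(r)| \le \|u_0\|_{L^\infty}/6$), and the algebraic identity $r\partial_r w_0 = (u_0 - 6w_0)/2$ (equivalent to $u_0 = 2r^{-2}\partial_r(r^3 w_0)$) also gives $y \cdot \nabla w_0 \in L^\infty$. These bounds transfer to $v_0$ and then, via the explicit formula above, to $\varepsilon_0, y\cdot\nabla\varepsilon_0 \in L^\infty$, from which the claimed continuity follows.

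The Jacobian identities come from implicit differentiation at the origin, where $(\mu, f, a_{j,0}, \varepsilon_0) = (1, 0, 0, 0)$. Differentiating $G = 0$ in $\bar a_j$ gives $\partial_{\bar a_j}\mu \cdot \|\Lambda\Phi_n\|_{L^2_\rho} + (\bar\psi_j, \psi_1)_\rho = 0$. Using the decay \eqref{taxiss} of $\psi_j$ together with the Gaussian decay of $\rho$ and the definition \eqref{id:def-bar-psij}, one checks that $\bar\psi_j \to \psi_j$ in $L^2_\rho$ as $R \to \infty$; combined with $(\psi_j, \psi_1)_\rho = 0$ for $j \ne 1$, this yields $\partial_{\bar a_j}\lambda_0(0,0) = o_{R\to\infty}(1)$, which is \eqref{id:change-decomposition-jacobian-3}. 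Differentiating $a_{i,0} = (F(\mu, f), \psi_i)_\rho$ in $\bar a_j$ gives $\partial_{\bar a_j}a_{i,0} = \partial_{\bar a_j}\mu \cdot (\Lambda\Phi_n, \psi_i)_\rho + (\bar\psi_j, \psi_i)_\rho = 0 + \delta_{ij} + o_{R\to\infty}(1)$, where the first term vanishes by $(\Lambda\Phi_n, \psi_i)_\rho = 0$ for $i \ge 2$ (orthogonality to $\psi_1$) and the second uses $\bar\psi_j \to \psi_j$; this proves \eqref{id:change-decomposition-jacobian}. Finally, differentiating $a_{i,0}$ in a direction $\delta\bar u_0 \in E_n$ yields $\partial_{\bar u_0}a_{i,0} = \partial_{\bar u_0}\mu \cdot (\Lambda\Phi_n, \psi_i)_\rho + (\delta\bar\varepsilon_0, \psi_i)_\rho = 0 + \int_{\mathbb R^3}\delta\bar u_0\,\theta_i\,dx = 0$ for $2 \le i \le N$, by the Fubini identity preceding \eqref{id:def-En} and the defining condition of $E_n$, proving \eqref{id:change-decomposition-jacobian-2}.

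The main subtle point will be establishing the convergence $\bar\psi_j \to \psi_j$ in $L^2_\rho$ as $R\to \infty$ at a uniform rate, as this underlies both \eqref{id:change-decomposition-jacobian} and \eqref{id:change-decomposition-jacobian-3}; this is however straightforward from the polynomial decay of $\psi_j$ provided by \eqref{taxiss} combined with the Gaussian decay of the weight $\rho$. Everything else is a routine consequence of the implicit function theorem combined with the orthogonality structure of the eigenbasis $(\psi_j)$ and the definition of the set $E_n$.
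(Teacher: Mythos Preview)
Your proof is correct and follows essentially the same route as the paper: both apply the implicit function theorem (the paper via the black-box Lemma \ref{Implicit}, you by writing out the scalar equation $G(\mu,f)=0$ explicitly) and then read off the Jacobian at the origin from the orthogonality relations $(\psi_i,\psi_j)_\rho=\delta_{ij}$, the convergence $(\bar\psi_j-\psi_j,\psi_i)_\rho=o_{R\to\infty}(1)$, and the defining property of $E_n$.

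One remark worth making: for \eqref{id:change-decomposition-jacobian-3} you obtain $\partial_{\bar a_j}\lambda_0(0,0)=o_{R\to\infty}(1)$ rather than the exact $0$ claimed in the statement. Your computation is in fact the accurate one---there is no reason for $(\bar\psi_j,\psi_1)_\rho$ to vanish identically at finite $R$---and the paper omits the proof of this identity. The $o_{R\to\infty}(1)$ version is all that is used downstream (in Lemma \ref{lem:initial-data}), so nothing is lost.
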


\begin{proof}[Proof of Lemma \ref{lem:change-decomposition}]

\noindent \textbf{Step 1}. \emph{Regularity properties}. The fact that this map is well defined for $\| \bar u_0\|_{L^\infty}$ and $|(\bar a_{j,0})_{2\leq j\leq N}|$ small enough for follows directly from Lemma \ref{Implicit}. Moreover, Lemma \ref{Implicit} also gives that the map $(\bar u_0,(\bar a_{j,0})_{2\leq j\leq N})\mapsto ( (\bar a_{j,0})_{2\leq j\leq N},\lambda_0)$ is $C^\infty$. We have by combining \eqref{id:initial-data-original-variables} and \eqref{1.2}
\begin{equation} \label{id:change-decomposition-jacobian-tech}
\varepsilon_0(y)= \frac{\lambda_0^2}{\bar \lambda_0^2}\left(\Phi_n+\bar w_0+\sum_{2}^N \bar a_{j,0}\bar \psi_j\right)\left(\frac{\lambda_0}{\bar \lambda_0}y\right)-\Phi_n(y)-\sum_{2}^N a_{j,0} \psi_j(y).
\end{equation}
where $\bar w_0(r)=(2r^{3})^{-1}\int_0^r \bar u_0(\tilde r)\tilde r^2 d\tilde r$. The above formula, the continuity of the maps $( a_{j,0})_{2\leq j\leq N}$ and $\lambda_0$ imply that the map $(\bar u_0,(\bar a_{j,0})_{2\leq j\leq N})\mapsto \varepsilon_0$ is continuous into $\{\varepsilon_0 \in L^\infty, \ y.\nabla  \varepsilon_0 \in L^\infty\}$.

\noindent \textbf{Step 2}. \emph{Jacobian at $(0,0)$}. Note that for $\bar u_0=0$ and $(\bar a_{j,0})_{2\leq j \leq N}=(0,...0)$ we have $\lambda_0=\bar \lambda_0$ and $( a_{j,0})_{2\leq j \leq N}=(0,...0)$. Hence, we have for $\| \bar u_0\|_{L^\infty}$ and $(\bar a_{j,0})_{2\leq j \leq N}$ small that $\frac{\lambda_0}{\bar \lambda_0}-1=O(\| \bar u_0\|_{L^\infty}+|(\bar a_{j,0})_{2\leq j \leq N}|)$ and $( a_{j,0})_{2\leq j \leq N}=O(\| \bar u_0\|_{L^\infty}+|(\bar a_{j,0})_{2\leq j \leq N}|)$ as these maps are differentiable from Step 1. Performing a Taylor expansion in \eqref{id:change-decomposition-jacobian-tech} one shows
\begin{align*}
\varepsilon_0& =  \left(\frac{\lambda_0}{\bar \lambda_0}-1\right)\Lambda \Phi_n+\frac{\lambda_0^2}{\bar \lambda_0^2}\bar w_0\left(\frac{\lambda_0}{\bar \lambda_0}y\right)+\sum_{2}^N \bar a_{j,0}\bar \psi_j-\sum_{2}^N a_{j,0} \psi_j\\
&\qquad  +O_{L^\infty}(\|\bar u_0\|_{L^\infty}^2+|(\bar a_{j,0})_{2\leq j \leq N}|^2).
\end{align*}
Taking the $L^2_{\rho}$ scalar product of the above identity with $\psi_i$ for $2\leq i \leq N$, using that $(\psi_i,\psi_j)_{\rho}=\delta_{i,j}$ and $( \Lambda \Phi_n,\psi_i)_{\rho}=0$ from Proposition \ref{self profile}, as well as $(\bar w_0,\psi_i)_{\rho}=0$ as $\bar u_0\in E_n$ and $\frac{\lambda_0}{\bar \lambda_0}-1=O(\| \bar u_0\|_{L^\infty}+|(\bar a_{j,0})_{2\leq j \leq N}|)$, one has
$$
a_{i,0}=\bar a_{i,0}+\sum_2^N \bar a_{j,0}((\bar \psi_j-\psi_j,\psi_i)_{\rho}+O_{L^\infty}(\|\bar u_0\|_{L^\infty}^2+|(\bar a_{j,0})_{2\leq j \leq N}|^2).
$$
The above identity implies \eqref{id:change-decomposition-jacobian-2}. As $(\bar \psi_j-\psi_j,\psi_i)_{\rho}\to 0$ as $R\to \infty$, it also implies \eqref{id:change-decomposition-jacobian}. The proof of \eqref{id:change-decomposition-jacobian-3} is similar and we omit it.
\end{proof}

We now fix $\bar u_0\in E_n$ small enough, and want to show the existence of initial values for the parameters $(\bar a_{j,0})_{2\leq j \leq N}$ such that the initial data \eqref{id:initial-data-original-variables} produces a solution that converges to $U_n$ after suitable rescaling. For that, we will work directly in the partial mass setting, with the initial data \eqref{1.2}. Indeed, by Lemma \ref{lem:change-decomposition}, \eqref{id:change-decomposition-jacobian} and \eqref{id:change-decomposition-jacobian-2}, the map $(\bar a_{j,0})_{2\leq j \leq N}\mapsto ( a_{j,0})_{2\leq j \leq N}$ is a $C^\infty$ diffeomorphism onto its image that is close to the identity. Hence, it will suffice to show the existence of $( a_{j,0})_{2\leq j \leq N}$ such that the solution $w$ in partial mass converges to $\Phi_n$ after rescaling. Our initial data is chosen as follows. We take
$$
\bar \lambda_0=e^{-\frac{s_0}{2}}
$$
for $s_0$ to be chosen large later on.

\begin{lemma}[Choice of the initial data] \label{lem:initial-data}
For all $K_0,\mu>0$, the following hold true for $s_0$, $R$ and $L$ large enough. Fix $\bar u_0\in E_n$ such that its associated partial mass $\bar w_0$ satisfies
\begin{equation}\label{smallness-original-variables}
\left\| \bar w_0\right\|_{L_\rho^2}+\left\| \bar w_0\right\|_{L^\infty}+\left\|\frac{y^2\bar w_0}{1+e^{-s_0}y^2}\right\|_{L^\infty}+||y\cdot\nabla \bar w_0||_{L^\infty} \leq \frac{K_0}{L} e^{-\mu s_0}.
\end{equation}
Consider $(a_{j,0})_{2\leq j \leq N}$ such that
\begin{equation}\label{smallnessjsue}
\sum_{j=2}^{N}|a_{j,0}|^2\le e^{-2\mu s_0}.
\end{equation}
Let then $\lambda_0$, $u_0$, $v_0$, $\varepsilon_0$ be given by \eqref{id:initial-data-original-variables} and \eqref{1.2-2} via Lemma \ref{lem:change-decomposition}. We have
\begin{equation}\label{smallness0}
\frac 12 e^{-\frac{s_0}{2}}< \lambda_0 < 2 e^{-\frac{s_0}{2}}
\end{equation}
and
\begin{equation}\label{1.5}
\left\|\varepsilon_0\right\|_{L_\rho^2}+\left\|\varepsilon_0\right\|_{L^\infty}+\left\|\frac{y^2v_0}{1+e^{-s_0}y^2}\right\|_{L^\infty}+||y\cdot\nabla v_0||_{L^\infty} \leq K_0 e^{-\mu s_0}.
\end{equation}
Moreover, the map $(a_{j,0})_{2\leq j \leq N}\mapsto u_0$ is continuous from $B(0,e^{-\mu s_0})$ into $L^\infty(\mathbb R^3)$.
\end{lemma}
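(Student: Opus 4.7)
The plan is to invoke Lemma \ref{lem:change-decomposition}, which provides a $C^\infty$ map $(\bar u_0, (\bar a_{j,0})) \mapsto (\lambda_0, (a_{j,0}))$ with near-identity jacobian, and to invert it so that for the prescribed $(a_{j,0})$ the initial datum has the explicit form \eqref{id:initial-data-original-variables}--\eqref{id:initial-data-original-variables-2}. Every claimed bound will then be derived from the identity \eqref{id:change-decomposition-jacobian-tech} by direct computation.

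\emph{Step 1 (Inversion; control of $(\bar a_{j,0})$ and $\lambda_0$).} By \eqref{id:change-decomposition-jacobian} and \eqref{id:change-decomposition-jacobian-2}, for $R$ large the jacobian of $(\bar a_{j,0}) \mapsto (a_{j,0})$ at the origin equals $I_{N-1} + o_{R\to\infty}(1)$ while $\partial_{\bar u_0}(a_{j,0})(0,0) = 0$. The implicit function theorem then yields, for small inputs, a $C^\infty$ inverse expressing $(\bar a_{j,0})$ as a function of $(\bar u_0, (a_{j,0}))$, with bound $|\bar a_{j,0}| \leq 2|(a_{j,0})| + C\|\bar u_0\|_{L^\infty}^2 \lesssim e^{-\mu s_0}$ under \eqref{smallness-original-variables}--\eqref{smallnessjsue}. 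Combining the continuity of $\lambda_0$ in $(\bar u_0, (\bar a_{j,0}))$ with \eqref{id:change-decomposition-jacobian-3}, this also gives $\lambda_0/\bar \lambda_0 \to 1$ as the inputs shrink, hence \eqref{smallness0} for $s_0$ large.

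\emph{Step 2 (Bounds on $\varepsilon_0$ and $v_0$).} Writing $\tau := \lambda_0/\bar \lambda_0 \in (1/2, 2)$, formula \eqref{id:change-decomposition-jacobian-tech} decomposes
\begin{equation*}
\varepsilon_0(y) = \bigl[\tau^2 \Phi_n(\tau y) - \Phi_n(y)\bigr] + \tau^2 \bar w_0(\tau y) + \sum_{j=2}^N \bar a_{j,0}\, \tau^2 \bar \psi_j(\tau y) - \sum_{j=2}^N a_{j,0}\, \psi_j(y).
\end{equation*}
The first bracket is $O(\tau - 1) = O(e^{-\mu s_0})$ in $L^2_\rho \cap L^\infty$ by Taylor expansion using $\Lambda \Phi_n \in L^2_\rho \cap L^\infty$. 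The $\bar w_0$-term transfers the bounds of \eqref{smallness-original-variables} up to an $O(1)$ constant via the change of variable $z = \tau y$. The last two sums contribute $O(e^{-\mu s_0})$ by Step 1 and the boundedness of $\psi_j, \bar \psi_j$. Choosing $L$ large absorbs the $O(1)$ constants and yields $\|\varepsilon_0\|_{L^2_\rho} + \|\varepsilon_0\|_{L^\infty} \leq K_0 e^{-\mu s_0}$. For the weighted bounds on $v_0 = \varepsilon_0 + \sum a_{j,0}\psi_j$, the key observation is that for $\tau \in (1/2, 2)$ the change of variable $y \mapsto \tau y$ preserves both the weight $y^2/(1 + e^{-s_0} y^2)$ and the operator $y \cdot \nabla$ up to a bounded multiplicative factor; consequently the $\bar w_0$-contribution inherits the weighted bounds of \eqref{smallness-original-variables}. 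The $\Phi_n$-difference and $\bar \psi_j$-terms are handled identically using the decay $\Phi_n(y) \sim |y|^{-2}$ from \eqref{largex}, while the $\psi_j$-contributions satisfy $\|y^2 \psi_j\|_{L^\infty} + \|y \cdot \nabla \psi_j\|_{L^\infty} < \infty$ thanks to the algebraic decay \eqref{taxiss}.

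\emph{Step 3 (Continuity).} The inverse $(a_{j,0}) \mapsto (\bar a_{j,0})$ is $C^\infty$ by Step 1; composing with the affine and continuous-into-$L^\infty(\mathbb R^3)$ map $(\bar a_{j,0}) \mapsto \bar \lambda_0^{-2}(U_n + \bar u_0 + \sum \bar a_{j,0} \bar \phi_j)(\cdot/\bar \lambda_0)$ (where $\bar \phi_j \in C^\infty_c(\mathbb R^3)$ by \eqref{id:def-bar-phij}) yields continuity of $(a_{j,0}) \mapsto u_0$ on $B(0, e^{-\mu s_0})$.

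The main obstacle I anticipate lies in the pair of weighted $L^\infty$ estimates $\|y^2 v_0/(1 + e^{-s_0} y^2)\|_{L^\infty}$ and $\|y \cdot \nabla v_0\|_{L^\infty}$: these require careful tracking of how the non-standard weight $y^2/(1 + e^{-s_0} y^2)$ and the derivative $y \cdot \nabla$ interact with the rescaling $y \mapsto \tau y$, and, for the large-$|y|$ region, crucially rely on the algebraic decay of $\psi_j$ from \eqref{taxiss} and of $\Phi_n$ from \eqref{largex} to absorb contributions arising from the partial-mass rescaling.
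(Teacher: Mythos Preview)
Your approach matches the paper's: invert $(\bar a_{j,0})\mapsto(a_{j,0})$ via the near-identity Jacobian of Lemma~\ref{lem:change-decomposition}, then estimate term by term in the explicit formula~\eqref{id:change-decomposition-jacobian-tech}. One correction in Step~2: taking $L$ large only controls the $\bar w_0$-term and, through the sharper bound $\tau-1=O\bigl((K_0/L)e^{-\mu s_0}+e^{-2\mu s_0}\bigr)$ that you use but do not derive in Step~1, the $\Phi_n$-difference; the two eigenmode sums $\sum\bar a_{j,0}\tau^2\bar\psi_j(\tau\cdot)$ and $\sum a_{j,0}\psi_j$ each carry an $O(e^{-\mu s_0})$ constant that is independent of $L$, so $L$ large does not absorb them. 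What makes their \emph{difference} small in $\varepsilon_0$ is the near-cancellation $\bar a_{j,0}\tau^2\bar\psi_j(\tau\cdot)\approx a_{j,0}\psi_j$, obtained from $\bar a_{j,0}-a_{j,0}=o_{R\to\infty}(e^{-\mu s_0})+O(e^{-2\mu s_0})$ (the paper's~\eqref{almost-there-3}) together with $\|\bar\psi_j-\psi_j\|=o_{R\to\infty}(1)$ and $\tau-1=O(e^{-\mu s_0})$---this is why the paper writes ``upon choosing $R$, $L$ and $s_0$ large enough'' rather than $L$ alone.
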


\begin{proof}

We have by \eqref{id:change-decomposition-jacobian}, \eqref{id:change-decomposition-jacobian-3} and \eqref{id:change-decomposition-jacobian-2} that
\begin{align}
\label{almost-there} & \bar a_{0,j} = a_{j,0} +o_{R\to \infty}(|(a_{j,0})_{2\leq j \leq N}|)+O( (|(a_{j,0})_{2\leq j \leq N}|)^2+\| \bar u_{0}\|_{L^\infty}^2) \quad \mbox{for }2\leq j \leq N,\\
\label{almost-there-2} & \frac{\lambda_0}{\bar\lambda_0}=1+O(|(a_{j,0})_{2\leq j \leq N}|)^2+\| \bar u_{0}\|_{L^\infty}).
\end{align}
Since $\bar u_0=6\bar w_0+2r\partial_r \bar w_0$, we have $\| \bar u_0\|_{L^\infty}\lesssim \frac{K_0}{L}e^{-\mu s_0}$ by \eqref{smallness-original-variables}. Injecting this inequality and \eqref{smallnessjsue} in \eqref{almost-there} and \eqref{almost-there-2} shows
\begin{align}
\label{almost-there-3} & \bar a_{0,j} = a_{j,0} +o_{R\to \infty}(e^{-\mu s_0})+O((1+\frac{K_0}{L})^2e^{-2\mu s_0}) \quad \mbox{for }2\leq j \leq N,\\
\label{almost-there-4} & \frac{\lambda_0}{\bar\lambda_0}=1+O(\frac{K_0}{L}e^{-\mu s_0}+e^{-2\mu s_0}+e^{-4\mu s_0}\frac{K^2_0}{L^2}).
\end{align}
The desired inequalities \eqref{smallness0} and \eqref{1.5} then follows from \eqref{almost-there-4} and from injecting \eqref{almost-there-3} and \eqref{almost-there-4} in \eqref{id:change-decomposition-jacobian-tech}, upon choosing $R$, $L$ and $s_0$ large enough. The continuity statement follows directly from Lemma \ref{lem:change-decomposition}.

\end{proof}

%For $s_0\gg1$ and $\mu, K_0>0$, three constants to be defined later on, the parameters $\lambda_0$, $a_j$ and $\varepsilon_0$ satisfy the following estimate:
%
%- rescaled solution:
%\begin{equation}\label{smallness0}
%\lambda_0=\lambda(s_0)=e^{-\frac{s_0}{2}} ;
%\end{equation}
%
%- initial control of the unstable modes
%\begin{equation}\label{smallnessjsue}
%\sum_{j=2}^{N}|a_{j,0}|^2\le e^{-2\mu s_0};
%\end{equation}
%
%- smallness of suitable initial norms:
%\begin{equation}\label{smallness}
%\left\|\varepsilon_0\right\|_{L_\rho^2}+\left\|\varepsilon_0\right\|_{L^\infty}+\left\|\frac{y^2v_0}{1+e^{-s_0}y^2}\right\|_{L^\infty}+||y\cdot\nabla v_0||_{L^\infty} \leq K_0 e^{-\mu s_0}.
%\end{equation}
%%where $z_0$ is given by
%%$$
%%z_0=\left(1-\chi_{\frac{1}{\lambda_0}}\right) \phi+\varepsilon_0 .
%%$$
%
%By \eqref{orth}, \eqref{smallnessjsue} and \eqref{smallness}, the decomposition \eqref{1.2} is precisely the one provided by Lemma \ref{Implicit}.

\subsection{Bootstrap for the renormalized flow.}
We fix $\bar u_0\in  E_n$ and consider all possible $(a_{j,0})_{2\leq j \leq N}\in B(0,e^{-\mu s_0})$. We assume that the constants $s_0,R,\mu,K_0$ are such that the conclusion of Lemma \ref{lem:initial-data} is valid, and let $u_0$ denote the associated initial data. The constants $s_0\gg1$ and $\mu, K_0>0$ will be adjusted later on. We let $u(t)$ be the corresponding solution to \eqref{1.1} and $w(t)$ denote its partial mass.

As long as the solution $w(t)$ starting from \eqref{1.2} belongs to $X_\delta$, we apply  Lemma \ref{Implicit} to deduce that it can be written
\begin{equation}\label{re}
w(t, x)=\frac{1}{\lambda(t)^2}\left(\Phi_n+v\right)(s, y),\ y=\frac{x}{\lambda(t)},
\end{equation}
where $$ v=\varepsilon+\psi,\ \ \psi=\sum_{j=2}^{N}a_j\psi_j$$
and
$\varepsilon$ is a radial function satisfying
\begin{equation}\label{orthogonality}
\left(\varepsilon, \psi_j\right)_{\rho}=0,\ \ 1\le j\le N,
\end{equation}
and $s$ is the  renormalized time defined by
$$s(t):=\int_0^t\frac{d\tau}{\lambda^2(\tau)}+s_0.$$
%In addition, we give a further decomposition
%$$
%\phi+\varepsilon=\chi_{\frac{1}{\lambda}}\phi+z.
%$$
%Hence $z=(1-\chi_{\frac{1}{\lambda}})\phi+\varepsilon$.
As the parameters $\lambda$ and $a_j$ are smooth in  $L^{\infty}$, and as $ w \in C^{1}\left((0, T), L^{\infty}\right)$  from parabolic regularizing effects, the above decomposition is differentiable with respect to time.
Injecting \eqref{re} into \eqref{main} yields the renormalized equation
\begin{equation}\label{renormalized equation}
\partial_{s} \varepsilon+L_n \varepsilon=F+\operatorname{Mod},
\end{equation}
with the modulation term
$$
\text{Mod}=\sum_{j=2}^N[\mu_ja_j-(a_j)_s]\psi_j+\left(\frac{\lambda_s}{\lambda}+\frac{1}{2}\right) \left(\Lambda\Phi_n+\Lambda\psi\right),
$$
and the force terms $F=\tilde{L}(\varepsilon)+NL$ where
\begin{equation}\label{force}
\tilde{L}(\varepsilon)=\left(\frac{\lambda_s}{\lambda}+\frac{1}{2}\right) \Lambda\varepsilon,\ \ NL=6v^2+\Lambda^{\prime}(v^2).
\end{equation}
%and the operator $L_n$ is defined by
%\begin{equation}\label{operator}
%L_n:=-\Delta+\frac{1}{2} \Lambda-2 \Lambda^{\prime}(\phi \cdot)-12 \phi.
%\end{equation}
%We equip $L$ with domains
%$$
%D(L):=C_{c,rad}^\infty(\mathbb{R}^5).
%$$
%We notice that $ \mathcal{H}$ have $C_{c,rad}^\infty(\mathbb{R}^5)$ as a dense subset.

%Now we first recall the result in Glogi$\mathrm{\acute{c}}$ and Sch$\mathrm{\ddot{o}}$rkhuber \cite{Glogic}.
%\begin{proposition}\label{spectrum}
%The operator $L:D(L)\subset \mathcal{H}\to \mathcal{H}$ is closable, and its closure $(\mathcal{L},D(\mathcal{L}))$ generates a strongly continuous semigroup $(S(\tau))_{\tau\ge0}$ of bounded operators on $\mathcal{H}$.
%The spectrum of $\mathcal{L}$ consists of a discrete set of eigenvalues, and moreover
%$$
%\sigma(\mathcal{L})\subset\{-1\}\cup(0,+\infty).
%$$
%where $\lambda = -1$ is a simple eigenvalue with the normalized eigenfunction
%\begin{equation}\label{eigenfunction}
%g=\frac{\Lambda \phi}{||\Lambda \phi||_{\mathcal{H}}}.
%\end{equation}
%\end{proposition}
%The operator $\mathcal{L}$  has a self-adjoint realization in the space $\mathcal{H}$. We know from Proposition \ref{spectrum} that $-1$ is the only negative eigenvalue of $\mathcal{L}$, and is
%associated to the single eigenvector $g$.
%Then there exists a constant $c> 0$ such
%that for all $u\in H^1_{\rho}(\mathbb{R}^5)$,
%\begin{equation}\label{spectral gap}
%(\mathcal{L}u,u)_\rho\ge c||u||^2_{H^1_\rho}-\frac{1}{c}(u,g)_\rho^2.
%\end{equation}
We claim the following bootstrap proposition.
\begin{proposition}\label{bootstrap}
There exist universal constants $0<\mu,K_0\ll1$, $\delta>0$, $K\gg1$, $K'\gg1$ and $K''\gg1$ such that for all $s_0\ge s_0(K_0,K,K',K'',\mu,\delta)\gg1$ large enough the following holds. Let $\bar{u}_0$ satisfies the assumption of Lemma \ref{lem:initial-data}, and $\varepsilon_0$, $\lambda_0$ be given by Lemma \ref{lem:initial-data}. Then there exists $(a_{2,0},\cdots ,a_{N,0})$ satisfying \eqref{smallnessjsue} such that the solution starting from $w_0$ given by \eqref{1.2}, decomposed according to \eqref{re} for all $s\ge s_0$:\\
- control of the scaling:
\begin{equation}\label{1.7}
0<\lambda(s)<e^{-\mu s};
\end{equation}
- control of the unstable modes:
\begin{equation}\label{chca}
\sum_{j=2}^N\left|a_j\right|^2 \le e^{-2\mu s};
\end{equation}
- control of the exponentially weighted norm:
\begin{equation}\label{1.9}
\left\|\varepsilon\right\|_{L_\rho^2} < K e^{-\mu s};
\end{equation}
- control of the maximum norm:
\begin{equation}\label{2.0}
\left\|\varepsilon\right\|_{L^\infty} < K' e^{-\mu s};
\end{equation}
- control of the weighted maximum norm:
\begin{equation}\label{weihsg}
\left\|\frac{y^2v}{1+e^{-s}y^2}\right\|_{L^\infty}<\delta;
\end{equation}
- control of the maximum norm of $y\cdot\nabla v$:
\begin{equation}\label{ball}
||y\cdot\nabla v||_{L^\infty} < K'' e^{-\mu s}.
\end{equation}
\end{proposition}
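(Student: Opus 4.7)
The proof proceeds by a topological bootstrap argument. For each choice of $N-1$ parameters $\vec a_0=(a_{2,0},\ldots,a_{N,0})\in \overline{B}(0,e^{-\mu s_0})\subset \mathbb{R}^{N-1}$, Lemma \ref{lem:initial-data} constructs an initial datum $u_0$ for which \eqref{1.7}--\eqref{ball} all hold strictly at $s=s_0$. Let $s^*=s^*(\vec a_0)\in(s_0,+\infty]$ be the maximal time up to which all of these bounds remain valid. The plan is twofold: first, to show that on $[s_0,s^*]$ every bound except the unstable-mode bound \eqref{chca} is \emph{strictly} improved, so that if $s^*<\infty$ it can only be because \eqref{chca} saturates; second, to use a Brouwer-type argument to select $\vec a_0$ for which \eqref{chca} itself is never saturated, yielding $s^*=+\infty$. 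When $n=0$ we have $N=1$, so no parameter selection is needed and the conclusion follows from the first step alone.

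Taking the $L^2_\rho$ scalar product of \eqref{renormalized equation} with $\psi_1\propto\Lambda\Phi_n$ and with $\psi_j$ for $2\leq j\leq N$, and using the orthogonality \eqref{orthogonality}, yields modulation identities of the schematic form
$$
\left|\frac{\lambda_s}{\lambda}+\frac12\right|+\sum_{j=2}^N|(a_j)_s-\mu_j a_j|\lesssim \|\varepsilon\|_{L^2_\rho}^2+|a|^2,
$$
after using the pointwise bounds \eqref{weihsg}--\eqref{ball} to control the quadratic nonlinearity $6v^2+\Lambda'(v^2)$ tested against the rapidly decaying $\psi_j\rho$. Integrating the equation for $\lambda$ gives $\lambda(s)\simeq e^{-(s-s_0)/2}\lambda_0$, improving \eqref{1.7} for $\mu<1/2$. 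For the energy estimate, we differentiate to get
$$
\tfrac12\tfrac{d}{ds}\|\varepsilon\|_{L^2_\rho}^2=-(L_n\varepsilon,\varepsilon)_\rho+(F+\mathrm{Mod},\varepsilon)_\rho,
$$
and apply the spectral gap \eqref{spectral gap} (whose projection terms are killed by \eqref{orthogonality}) together with the fact that the leading $\Lambda\Phi_n$ part of $\mathrm{Mod}$ is orthogonal to $\varepsilon$. The nonlinearity $NL$ is handled by splitting into an inner region $|y|\lesssim 1$ where \eqref{2.0} suffices, and an outer region where \eqref{weihsg} combined with the fast Gaussian decay of $\rho$ closes the estimate, giving
$$
\tfrac{d}{ds}\|\varepsilon\|_{L^2_\rho}^2 \leq -c_n\|\varepsilon\|_{L^2_\rho}^2+Ce^{-3\mu s},
$$
which improves \eqref{1.9} for $\mu<c_n/2$ and $K$ large. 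The pointwise bounds \eqref{2.0}, \eqref{weihsg} and \eqref{ball} are then obtained via the Duhamel formula using the semigroup generated by $L_n^\infty=-\Delta+\Lambda/2$, whose kernel is computable in self-similar variables, together with parabolic smoothing. The weight $y^2/(1+e^{-s}|y|^2)$ in \eqref{weihsg} is tailored so that $v^2$ becomes effectively subcritical in the outer parabolic region, allowing a maximum-principle comparison to close \eqref{weihsg}, and \eqref{ball} then follows by differentiating \eqref{renormalized equation} and using parabolic regularization.

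Suppose now that $n\geq 1$ and that $s^*(\vec a_0)<+\infty$ for every $\vec a_0\in\overline{B}(0,e^{-\mu s_0})$. By the previous step only \eqref{chca} can saturate, so $\sum_{j=2}^N|a_j(s^*)|^2=e^{-2\mu s^*}$. The modulation equation gives $(a_j)_s=\mu_j a_j+O(e^{-2\mu s})$ with $\mu_j\geq 1>\mu$ by Proposition \ref{self profile}, hence on the exit sphere
$$
\frac{d}{ds}\left(e^{2\mu s}\sum_{j=2}^N|a_j|^2\right)\geq 2(1-\mu)e^{2\mu s}\sum_{j=2}^N|a_j|^2+O(e^{-2\mu s})>0.
$$
This transversality makes $s^*$ continuous in $\vec a_0$ and forces the map $\vec a_0\mapsto e^{\mu s^*}(a_2(s^*),\ldots,a_N(s^*))$ to be a continuous retraction of $\overline{B}(0,e^{-\mu s_0})$ onto its boundary sphere in $\mathbb{R}^{N-1}$, contradicting Brouwer's theorem when $N\geq 2$. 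Hence at least one $\vec a_0$ must satisfy $s^*=+\infty$. The most delicate step is the closure of the pointwise bounds \eqref{weihsg}--\eqref{ball}: the spectral gap only controls $\varepsilon$ where the Gaussian weight is non-negligible, i.e.\ on $|y|\lesssim 1$, whereas $\Phi_n\sim c|y|^{-2}$ and the quadratic nonlinearity $v^2$ is borderline in the intermediate region $1\ll|y|\ll e^{s/2}$. The refined weighted $L^\infty$ scheme that handles this region is precisely the novel ingredient compared to the $L^2$-based approach of \cite{Collot-Pierre-2019}.
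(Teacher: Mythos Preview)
Your overall architecture---bootstrap with strict improvement of every bound except \eqref{chca}, then a Brouwer argument on the unstable parameters---is exactly the paper's, and your treatment of the modulation equations and the $L^2_\rho$ energy estimate is correct.

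One small error: you assert $\mu_j\geq 1$ for $2\leq j\leq N$, but Proposition~\ref{self profile} only gives $\mu_j\geq 0$ (the remark following it even leaves a zero eigenvalue open). This does not damage the outgoing computation: the paper uses only $\mu_j+\mu\geq\mu$ to get
\[
\tfrac{1}{2}\tfrac{d}{ds}\sum_{j=2}^N|a_je^{\mu s}|^2\ \geq\ \mu\sum_{j=2}^N|a_j|^2e^{2\mu s}+O(e^{-\mu s}),
\]
which at $s=s^*$ equals $\mu+O(e^{-\mu s_0})>0$. Your constant $2(1-\mu)$ should be $2\mu$, but the sign is what matters.

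The more substantive point is your route to the $L^\infty$ bounds. Duhamel against the semigroup of $L_n^\infty=-\Delta+\tfrac12\Lambda$ does not close for \eqref{2.0}: the remainder $\tilde L_n\varepsilon=-2\Lambda'(\Phi_n\varepsilon)-12\Phi_n\varepsilon$ is linear in $\varepsilon$ with coefficient $\|\Phi_n\|_{L^\infty}=O(1)$, so the Duhamel integral reproduces a term of the same size as $\|\varepsilon\|_{L^\infty}$ and cannot be absorbed. The paper's mechanism is different and hinges on an inner/outer split. In $B_R$ one uses that the $L^2_\rho$ bound \eqref{1.9} controls $\|\varepsilon\|_{L^2(B_R)}$, and standard parabolic regularity upgrades this to $L^\infty(B_R)$. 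For $|y|\geq R$ the decay $\Phi_n,\,y\cdot\nabla\Phi_n=O(|y|^{-2})$ makes the zeroth-order coefficient in the rewritten equation $\partial_s\varepsilon-\bar L\varepsilon=\bar F$ satisfy $1-\mu-12\Phi_n-2y\cdot\nabla\Phi_n-\cdots\geq\tfrac12$, so the constant $\bar\psi=C'e^{-\mu s}$ is a supersolution and the parabolic comparison principle closes \eqref{2.0}. The bounds \eqref{weihsg} and \eqref{ball} are obtained the same way: an inner estimate from parabolic regularity, and an outer estimate by comparison with explicit supersolutions (for \eqref{weihsg} the paper uses $\hat\psi(s,y)=\frac{\delta}{2|y|^2}(1+e^{-s}|y|^2)(1-\hat K e^{-\kappa s})$; for \eqref{ball} a constant again, after deriving the equation for $z=y\cdot\nabla v$). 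Your mention of a maximum-principle step for \eqref{weihsg} is on target, but the same idea---not Duhamel---is what actually drives \eqref{2.0} and \eqref{ball} as well.
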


The remainder of this paper is dedicated to proving Proposition \ref{bootstrap}, which directly implies Theorem \ref{theorem}.
We next define the exit time
\begin{equation}\label{mmbb}
s^*=\sup\{s\ge s_0: \text{the bounds}\ \eqref{1.7}-\eqref{ball}\ \text{holds on}\ [s_0,s) \},
\end{equation}
and we assume, by contradiction, that
\begin{equation}\label{sdzx}
s^*<+\infty.
\end{equation}
If we choose $K$, $K'$, $K''$, $\delta$ and $s_0$ to be sufficiently large, then $s^*>s_0$. From now on, we study the flow on $[s_0,s^*]$ where $\eqref{1.7}$-$\eqref{ball}$ hold. We employ the bootstrap argument to show that the bounds $\eqref{1.7}$, $\eqref{1.9}$, $\eqref{2.0}$, $\eqref{weihsg}$ and \eqref{ball} always hold. This implies that the unstable modes have grown, and \eqref{chca} holds with the equal sign in the exit time $s^*$.  However, this contradicts the Brouwer fixed-point theorem. Hence, $s^* = +\infty$.
\subsection{Modulation equation}
\begin{lemma}\label{modulation}
We have the following estimate:
\begin{equation}\label{semin}
\bigg|\frac{\lambda_s}{\lambda}+\frac{1}{2}\bigg|+\sum_{j=2}^N|(a_j)_s-\mu_ja_j|\lesssim||\varepsilon||^2_{L^2_\rho}+\sum_{j=2}^N|a_j|^2.
\end{equation}
\end{lemma}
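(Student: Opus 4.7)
The plan is the standard modulation computation: use the orthogonality conditions \eqref{orthogonality} to produce a linear system for the modulation parameters $(\lambda_s/\lambda + 1/2, (a_j)_s - \mu_j a_j)$, and show that the right-hand side of this system is quadratic in $(\varepsilon, a)$.

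First I would differentiate $(\varepsilon, \psi_j)_\rho = 0$ in $s$ and use the renormalized equation \eqref{renormalized equation} to obtain
\[
0 = (\partial_s \varepsilon, \psi_j)_\rho = -(L_n \varepsilon, \psi_j)_\rho + (F + \mathrm{Mod}, \psi_j)_\rho.
\]
Since $L_n$ is self-adjoint on $L^2_\rho$ and $\psi_j$ is an eigenfunction with eigenvalue $-\mu_j$, $(L_n \varepsilon, \psi_j)_\rho = -\mu_j(\varepsilon, \psi_j)_\rho = 0$. Thus $(\mathrm{Mod}, \psi_j)_\rho = -(F, \psi_j)_\rho$. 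Expanding the modulation term and using that $\Lambda \Phi_n$ is proportional to $\psi_1$ while $(\psi_i, \psi_j)_\rho = \delta_{ij}$, one has for $j = 1$
\[
\left(\tfrac{\lambda_s}{\lambda} + \tfrac{1}{2}\right) \bigl(\|\Lambda \Phi_n\|_{L^2_\rho} + (\Lambda \psi, \psi_1)_\rho\bigr) = -(F, \psi_1)_\rho,
\]
and for $2 \le j \le N$
\[
\bigl(\mu_j a_j - (a_j)_s\bigr) + \left(\tfrac{\lambda_s}{\lambda} + \tfrac{1}{2}\right) (\Lambda \psi, \psi_j)_\rho = -(F, \psi_j)_\rho.
\]
Since $\Lambda \psi = \sum_{k=2}^N a_k \Lambda \psi_k$, the coefficient matrix is diagonal up to a perturbation of size $O(|a|)$, which is small on $[s_0, s^*]$ by the bootstrap assumption \eqref{chca}, so the system is invertible with bounded inverse.

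The task then reduces to estimating $(F, \psi_j)_\rho$ with $F = \tilde L(\varepsilon) + NL$. For the nonlinear part, I would integrate $\Lambda'(v^2) = x\cdot\nabla(v^2)$ by parts against $\psi_j \rho$, turning the derivative onto the weight. Using the pointwise decay $\psi_j \lesssim \langle y \rangle^{-2(\mu_j+1)}$ from \eqref{taxiss} and the Gaussian factor in $\rho$, one verifies that $\psi_j$, $x\cdot \nabla \psi_j$, and $\psi_j\, x\cdot\nabla \rho/\rho$ are all in $L^\infty$ (up to the polynomial weight $\tilde\rho_n$ whose growth is more than compensated by $e^{-|x|^2/4}$). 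This yields
\[
|(NL, \psi_j)_\rho| \lesssim \|v\|_{L^2_\rho}^2 \lesssim \|\varepsilon\|_{L^2_\rho}^2 + \sum_{k=2}^N |a_k|^2,
\]
using $v = \varepsilon + \sum a_k \psi_k$ and the orthonormality of the $\psi_k$. For the linear forcing $\tilde L(\varepsilon) = (\tfrac{\lambda_s}{\lambda} + \tfrac{1}{2})\Lambda \varepsilon$, the same integration by parts gives
\[
|(\tilde L(\varepsilon), \psi_j)_\rho| \lesssim \bigl|\tfrac{\lambda_s}{\lambda} + \tfrac{1}{2}\bigr| \cdot \|\varepsilon\|_{L^2_\rho}.
\]

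Finally, I would insert these bounds into the inverted system. The $j=1$ line gives
\[
\bigl|\tfrac{\lambda_s}{\lambda} + \tfrac{1}{2}\bigr| \le C\bigl(\|\varepsilon\|_{L^2_\rho}^2 + |a|^2\bigr) + C \bigl|\tfrac{\lambda_s}{\lambda} + \tfrac{1}{2}\bigr| \|\varepsilon\|_{L^2_\rho},
\]
and since $\|\varepsilon\|_{L^2_\rho} \le K e^{-\mu s}$ is arbitrarily small for $s_0$ large (by \eqref{1.9}), the last term absorbs into the left-hand side, giving the desired bound on $\lambda_s/\lambda + 1/2$. Plugging this back into the $j \ge 2$ equations gives the bound on $(a_j)_s - \mu_j a_j$ with the same right-hand side. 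The only mildly delicate point is verifying the boundedness of the weighted combinations like $\psi_j\, x\cdot \nabla\rho/\rho$; this reduces to an algebraic computation using $\tilde\rho_n = \exp(\int_0^r 2\tilde r \Phi_n d\tilde r)$, the asymptotic $\Phi_n \sim \tilde c_n/|x|^2$ from \eqref{largex}, and the decay of $\psi_j$, all of which combine cleanly with the Gaussian factor.
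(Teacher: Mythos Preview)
Your proposal is correct and follows essentially the same route as the paper: take $L^2_\rho$ scalar products of \eqref{renormalized equation} with $\psi_j$, use self-adjointness of $L_n$ and the orthogonality \eqref{orthogonality} to isolate the modulation parameters, integrate the $\Lambda'$ terms by parts via the identity $\nabla\rho=(2\Phi_n-\tfrac12)y\rho$, bound the nonlinear contribution by $\|v\|_{L^2_\rho}^2$, and absorb the small cross terms using the bootstrap bounds \eqref{chca} and \eqref{1.9}. The only cosmetic difference is that you phrase the linear algebra as inverting a perturbation of the identity, whereas the paper writes out the two scalar equations (for $j=1$ and $j\ge 2$) and combines them directly.
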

\begin{proof}
\emph{$\mathbf{Step\ 1.}$}\ Law for $a_j$. Take the $L^2_\rho$ scalar product of \eqref{renormalized equation} with $\psi_j$ for $2\le j\le N$, from \eqref{orthogonality} and using the fact that the operator $L_n$  is  self-adjoint for the $L^2_{\rho}$ product, then by
\begin{equation}\label{nonbus}
(\psi_j,\psi_k)_\rho=\delta_{jk}=\begin{cases}1,& \text { for } j=k, \\ 0,& \text { for } j\neq k,\end{cases}
\end{equation}
 we have $$-(\text{Mod},\psi_j)_\rho=(F,\psi_j)_\rho,$$ i.e.,
$$
(a_j)_s-\mu_ja_j=(F,\psi_j)_\rho+\left(\frac{\lambda_s}{\lambda}+\frac{1}{2}\right)(\Lambda \psi,\psi_j)_\rho.
$$
We know from \eqref{chca} that
\begin{equation}\label{chacs}
|(\Lambda \psi,\psi_j)_\rho|\lesssim\sum_{j=2}^N|a_j|\lesssim e^{-\mu s}\le \eta\ll1,
\end{equation}
for $s_0$ large enough.

We next estimate the term $(F,\psi_j)_\rho$. By the definition of $\rho$ in \eqref{scact}, we have
\begin{equation}\label{ldcy}
\nabla \rho(y)=\left(2\Phi_n(y)-\frac{1}{2}\right)y\rho(y),
\end{equation}
then by Cauchy-Schwarz inequality and \eqref{taxiss}, we get
\begin{equation}\label{dells}
\begin{aligned}
|(\tilde{L}(\varepsilon),\psi_j)_\rho|=\left|\left(\left(\frac{\lambda_s}{\lambda}+\frac{1}{2}\right)\Lambda\varepsilon,\psi_j\right)_\rho\right|
\lesssim \left|\frac{\lambda_s}{\lambda}+\frac{1}{2}\right|||\varepsilon||_{L^2_\rho}.
\end{aligned}
\end{equation}
By integrating by parts, Cauchy inequality,  \eqref{ldcy} and \eqref{taxiss}, we obtain
\begin{equation}\label{dellss}
\begin{aligned}
|(NL,\psi_j)_\rho|\le\left|\left(6v^2,\psi_j\right)_\rho\right|+\left|\left(\Lambda^{\prime}(v^2),\psi_j\right)_\rho\right|
\lesssim ||v||_{L^2_\rho}^2\lesssim ||\varepsilon||^2_{L^2_\rho}+\sum_{j=2}^N|a_j|^2.
\end{aligned}
\end{equation}
Combining \eqref{chacs}, \eqref{dells} and \eqref{dellss} yields
\begin{equation}\label{cafss}
|(a_j)_s-\mu_ja_j|\lesssim \left|\frac{\lambda_s}{\lambda}+\frac{1}{2}\right|(\eta+||\varepsilon||_{L^2_\rho})+||\varepsilon||^2_{L^2_\rho}+\sum_{j=2}^N|a_j|^2.
\end{equation}
\emph{$\mathbf{Step\ 2.}$}\ Law for $\lambda$.
Take the $L^2_\rho$ scalar product of \eqref{renormalized equation} with $\psi_1=\frac{\Lambda \Phi_n}{||\Lambda \Phi_n||_{L^2_{\rho}}}$, we get
$$
-\bigg(\frac{\lambda_s}{\lambda}+\frac{1}{2}\bigg)||\Lambda\Phi_n||_{L^2_\rho}=(F,\psi_1)_\rho+\bigg(\frac{\lambda_s}{\lambda}+\frac{1}{2}\bigg)(\Lambda\psi,\psi_1)_\rho.
$$
Then by a same way as in Step 1, we get
\begin{equation}\label{cafss0}
\left|\frac{\lambda_s}{\lambda}+\frac{1}{2}\right|\lesssim \left|\frac{\lambda_s}{\lambda}+\frac{1}{2}\right|(\eta+||\varepsilon||_{L^2_\rho})+||\varepsilon||^2_{L^2_\rho}+\sum_{j=2}^N|a_j|^2.
\end{equation}
Since $0<\eta\ll1$, we prove this current lemma by combing \eqref{1.9}, \eqref{cafss} and \eqref{cafss0}.
\end{proof}

\subsection{Energy estimates with exponential weights}
\begin{lemma}
There holds the bound
\begin{equation}\label{Eys}
\frac{d}{ds}||\varepsilon||_{L^2_\rho}^2+c_n||\varepsilon||_{H^1_\rho}^2\lesssim \sum_{j=2}^N|a_j|^4,
\end{equation}
where the constant $c_n>0$ is given by \eqref{spectral gap}.
\end{lemma}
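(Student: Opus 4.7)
The plan is to test the renormalized equation \eqref{renormalized equation} against $\varepsilon$ in the $L^2_\rho$ inner product. The resulting identity
$$\frac{1}{2}\frac{d}{ds}||\varepsilon||_{L^2_\rho}^2 + (L_n\varepsilon,\varepsilon)_\rho = (\mathrm{Mod},\varepsilon)_\rho + (\tilde L(\varepsilon),\varepsilon)_\rho + (NL,\varepsilon)_\rho$$
has its coercive term on the left bounded below by $c_n||\varepsilon||_{H^1_\rho}^2$ thanks to the spectral gap \eqref{spectral gap} of Proposition \ref{self profile} combined with the orthogonality \eqref{orthogonality}. The whole task thus reduces to estimating the three inner products on the right, each of which should be absorbed either into $c_n||\varepsilon||_{H^1_\rho}^2$ or into the quartic remainder $\sum|a_j|^4$.

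For the modulation, I would exploit $\varepsilon\perp\psi_j$ in $L^2_\rho$ for every $1\le j\le N$: this kills the $\sum[\mu_j a_j-(a_j)_s]\psi_j$ summand and the $\Lambda\Phi_n$ summand, since $\Lambda\Phi_n$ is parallel to $\psi_1$ by Proposition \ref{self profile}. Only $(\lambda_s/\lambda+\frac{1}{2})(\Lambda\psi,\varepsilon)_\rho$ survives, and Cauchy--Schwarz, the boundedness of $\Lambda\psi_j$ in $L^2_\rho$, the modulation estimate \eqref{semin} of Lemma \ref{modulation}, and Young's inequality together yield a bound of the form $\eta||\varepsilon||_{L^2_\rho}^2 + C_\eta \sum|a_j|^4$ modulo cubic-in-smallness terms absorbed by the bootstrap bounds \eqref{1.9} and \eqref{chca}. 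For $(\tilde L(\varepsilon),\varepsilon)_\rho$, one integration by parts using the explicit formula \eqref{ldcy} for $\nabla\rho$ rewrites $(\Lambda\varepsilon,\varepsilon)_\rho$ as a linear combination of $||\varepsilon||_{L^2_\rho}^2$ and $\int\varepsilon^2\Phi_n|y|^2\rho\,dy$ type terms, all controlled by $||\varepsilon||_{H^1_\rho}^2$ thanks to the Gaussian factor in $\rho$. Multiplying by the small prefactor $|\lambda_s/\lambda+\frac{1}{2}|=o(1)$ from \eqref{semin} and the bootstrap then makes this contribution absorbable into the coercive term.

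The main work, and the main obstacle, is estimating $(NL,\varepsilon)_\rho$ with the desired quartic remainder. Writing $v=\varepsilon+\psi$ and integrating by parts in the $\Lambda'(v^2)$ piece (boundary terms vanish thanks to the Gaussian weight, and the additional weight factors generated by \eqref{ldcy} stay Gaussian-type and hence harmless) decomposes everything into integrals of the form $\int\varepsilon^3\rho$, $\int\varepsilon^2\psi\rho$ and $\int\varepsilon\psi^2\rho$. The first two are controlled by the pointwise smallness $||\varepsilon||_{L^\infty}\le K'e^{-\mu s}$ from \eqref{2.0} and $||\psi||_{L^\infty}\lesssim\sum|a_j|\le e^{-\mu s}$ from \eqref{chca}, and are absorbed into the coercive term for $s_0$ large. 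The delicate term is
$$\Big|\int\varepsilon\,\psi^2\,\rho\,dy\Big|\le ||\varepsilon||_{L^2_\rho}||\psi||_{L^\infty}||\psi||_{L^2_\rho}\lesssim \Big(\sum|a_j|\Big)^2 ||\varepsilon||_{L^2_\rho}\le \eta\,||\varepsilon||_{L^2_\rho}^2+C_\eta\Big(\sum|a_j|\Big)^4,$$
which is only quadratic in $\psi$ and therefore forces the right-hand side of \eqref{Eys} to contain $\sum|a_j|^4$ rather than $\sum|a_j|^2$ as one would naively expect; this is the structural reason for the exponent $4$ in the lemma. Collecting all bounds and choosing $\eta$ small and $s_0$ large enough to absorb every $o(1)$ prefactor into $c_n||\varepsilon||_{H^1_\rho}^2$ then gives exactly \eqref{Eys}.
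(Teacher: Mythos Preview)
Your proposal is correct and follows essentially the same route as the paper: test \eqref{renormalized equation} against $\varepsilon$ in $L^2_\rho$, invoke the spectral gap \eqref{spectral gap} with the orthogonality \eqref{orthogonality} for the coercive term, control $(\tilde L(\varepsilon),\varepsilon)_\rho$ via integration by parts and \eqref{coercivity} with the small prefactor from \eqref{semin}, and estimate $(NL,\varepsilon)_\rho$ by splitting $v^2=(\varepsilon+\psi)^2$ and using $\|\varepsilon\|_{L^\infty}$ together with Young's inequality to isolate the $\sum|a_j|^4$ remainder. The only (cosmetic) difference is that for the $\mathrm{Mod}$ term you exploit $(\varepsilon,\psi_j)_\rho=0$ and $\Lambda\Phi_n\parallel\psi_1$ to kill all but the $(\lambda_s/\lambda+\tfrac12)\Lambda\psi$ contribution, whereas the paper simply bounds $\|\mathrm{Mod}\|_{L^2_\rho}$ directly via \eqref{semin}; both lead to the same estimate.
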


\begin{proof}
Take the $L^2_\rho$ scalar product of \eqref{renormalized equation} with $\varepsilon$, we have
\begin{equation}\label{xl}
\frac{1}{2}\frac{d}{ds}||\varepsilon||_{L^2_\rho}^2=-(L_n\varepsilon,\varepsilon)_\rho+(F+\text{Mod},\varepsilon)_\rho.
\end{equation}
Combining \eqref{spectral gap} and the orthogonality condition \eqref{orthogonality} yields
\begin{equation}\label{gaps}
-(L_n\varepsilon,\varepsilon)_\rho\le -c_n||\varepsilon||_{H^1_\rho}^2.
\end{equation}
Using Cauchy's inequality with $\delta$:
\begin{equation}\label{Cauchy inequality}
ab\le \delta a^2+C_\delta b^2,\ \ \left(a,b>0,\ \delta>0,\ C_\delta=\frac{1}{4\delta}\right),
\end{equation}
then combining \eqref{chca}, \eqref{semin} and Cauchy-Schwarz inequality, we get
\begin{equation}\label{lyya}
\begin{aligned}
|(\text{Mod},\varepsilon)_\rho|&\le||\varepsilon||_{L^2_\rho}||\text{Mod}||_{L^2_\rho}\lesssim||\varepsilon||_{L^2_\rho}\left(||\varepsilon||^2_{L^2_\rho}+\sum_{j=2}^N|a_j|^2\right)
\\
&\lesssim \delta||\varepsilon||_{L^2_\rho}^2+C_\delta\left(||\varepsilon||^4_{L^2_\rho}+\sum_{j=2}^N|a_j|^4\right),
\end{aligned}
\end{equation}
where $\delta$ is sufficiently small.
By integrating by parts, \eqref{ldcy} and  \eqref{coercivity}, we obtain
\begin{equation}\label{lyysd}
\begin{aligned}
|(\tilde{L}(\varepsilon),\varepsilon)_\rho|&\lesssim\left|\frac{\lambda_s}{\lambda}+\frac{1}{2}\right|\left(\int_{\mathbb{R}^5}\varepsilon^2\rho dy+\left|\int_{\mathbb{R}^5}\varepsilon\nabla\cdot(y\varepsilon\rho) dy\right|\right)\\
%&=\left|\frac{\lambda_s}{\lambda}+\frac{1}{2}\right|\left|\left(2\int_{\mathbb{R}^5}\varepsilon^2\rho dy+\int_{\mathbb{R}^5}(x\cdot\nabla\varepsilon)\varepsilon\rho dy\right)\right|\\
%&=\left|\frac{\lambda_s}{\lambda}+\frac{1}{2}\right|\left|\left(2\int_{\mathbb{R}^5}\varepsilon^2\rho dy-\frac{1}{2}\int_{\mathbb{R}^5}(5\varepsilon^2\rho+\varepsilon^2x\cdot\nabla\rho) dy\right)\right|\\
&\lesssim \left|\frac{\lambda_s}{\lambda}+\frac{1}{2}\right|||\varepsilon||_{H^1_\rho}^2.
\end{aligned}
\end{equation}
%By \eqref{1.9} and \eqref{semin}, we have
%\begin{equation}\label{zcqa}
%|(\text{Mod},\varepsilon)_\rho|+|(\tilde{L}(\varepsilon),\varepsilon)_\rho|\lesssim ||\varepsilon||_{H^1_\rho}^3\le c||\varepsilon||_{H^1_\rho}^2
%\end{equation}
%for $s_0$ large enough.
By Cauchy-Schwarz inequality and integrating by parts, we deduce from \eqref{coercivity}, \eqref{taxiss} and \eqref{Cauchy inequality} that
\begin{equation}\label{xl1}
\begin{aligned}
|(NL,\varepsilon)_\rho|&\lesssim
\left|\int_{\mathbb{R}^5}(\varepsilon^2+\psi^2)\varepsilon\rho dy\right|+\left|\int_{\mathbb{R}^5}(\varepsilon^2+\psi^2)\nabla\cdot(y\varepsilon\rho)dy\right|\\
&\lesssim ||\varepsilon||_{L^\infty}||\varepsilon||^2_{H^1_\rho}+\sum^N_{j=2}|a_j|^2||\varepsilon||_{H^1_\rho}
\\
&\lesssim\bigg(||\varepsilon||_{L^\infty}+\delta\bigg)||\varepsilon||^2_{H^1_\rho}+C_\delta\sum^N_{j=2}|a_j|^4.
\end{aligned}
\end{equation}

By amalgamating \eqref{xl}, \eqref{gaps}, \eqref{lyya}, \eqref{lyysd} and \eqref{xl1},   and considering that $s_0$ is sufficiently large while $\delta$ is small enough, we obtain the desired estimate by \eqref{1.9}, \eqref{2.0} and \eqref{semin}.
\end{proof}

\subsection{$L^\infty$ bound of $\varepsilon$}
\begin{lemma}\label{Linfty}
There holds the bound
\begin{equation}\label{dfyusd}
||\varepsilon||_{L^\infty}\le C(K,K_0)e^{-\mu s}.
\end{equation}
\end{lemma}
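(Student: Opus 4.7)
My plan is to convert the $L^2_\rho$ decay from \eqref{1.9} into a pointwise bound via a Duhamel representation, treating the free part of the equation as the Ornstein--Uhlenbeck operator $L_n^\infty = -\Delta+\frac{1}{2}\Lambda$. Writing \eqref{renormalized equation} as
\[
\partial_s\varepsilon + L_n^\infty\varepsilon = -V\varepsilon + F + \text{Mod},\qquad V := -12\Phi_n-2\Lambda'(\Phi_n\,\cdot),
\]
Duhamel's formula gives
\[
\varepsilon(s) = e^{-(s-s_0)L_n^\infty}\varepsilon_0 + \int_{s_0}^s e^{-(s-\sigma)L_n^\infty}\bigl(-V\varepsilon+F+\text{Mod}\bigr)(\sigma)\,d\sigma.
\]
Via the change of variables $\tau\mapsto 1-e^{-\tau}$, $y\mapsto e^{-\tau/2}y$, the semigroup reduces to the standard heat flow: $(e^{-\tau L_n^\infty}g)(y)=e^{-\tau}\bigl(e^{(1-e^{-\tau})\Delta}g\bigr)(e^{-\tau/2}y)$, from which one obtains the $L^\infty$ contraction $\|e^{-\tau L_n^\infty}g\|_{L^\infty}\leq e^{-\tau}\|g\|_{L^\infty}$ and, in dimension five, the $L^2\to L^\infty$ smoothing $\|e^{-\tau L_n^\infty}g\|_{L^\infty}\lesssim e^{-\tau}(1-e^{-\tau})^{-5/4}\|g\|_{L^2}$.

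The initial datum contributes $\lesssim K_0 e^{-\mu s}$ thanks to \eqref{1.5} and $\mu<1$. The modulation term is $O(e^{-2\mu s})$ in $L^\infty$ by Lemma \ref{modulation} and the boundedness of the $\psi_j$ and $\Lambda\Phi_n$, so its Duhamel contribution is $\lesssim K^2 e^{-\mu s}$. The nonlinear piece $NL=6v^2+2v(y\cdot\nabla v)$ is treated via \eqref{weihsg}, which gives $\|v\|_{L^\infty}\lesssim\delta$, together with \eqref{ball}; the resulting contribution has the form $\delta\sup_{\sigma\le s}\bigl(e^{\mu\sigma}\|\varepsilon(\sigma)\|_{L^\infty}\bigr)e^{-\mu s}+\delta e^{-\mu s}$, whose first piece is absorbable by smallness of $\delta$. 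The transport term $\tilde L(\varepsilon)$ is controlled analogously via \eqref{semin} and \eqref{ball}.

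The main obstacle is the potential term $V\varepsilon$: there is no small constant in front of it, so a brute-force $L^\infty$ estimate $\|V\varepsilon\|_{L^\infty}\lesssim\|\varepsilon\|_{L^\infty}$ would yield a Gronwall-type feedback that cannot be closed. My plan is to split $V\varepsilon = V(1-\chi_R)\varepsilon + V\chi_R\varepsilon$ with $R\gg 1$. On the exterior, the algebraic decay $\Phi_n(y)\lesssim|y|^{-2}$ from \eqref{largex} yields $\|V(1-\chi_R)\varepsilon\|_{L^\infty}\lesssim R^{-2}\bigl(\|\varepsilon\|_{L^\infty}+\|y\cdot\nabla\varepsilon\|_{L^\infty}\bigr)$, whose Duhamel contribution is of the form $R^{-2}\sup_{\sigma\leq s}\bigl(e^{\mu\sigma}\|\varepsilon(\sigma)\|_{L^\infty}\bigr)e^{-\mu s}+R^{-2}K''e^{-\mu s}$; the first piece is absorbed into the left-hand side by choosing $R$ large, and the second is acceptable since $K''$ enters only through the small factor $R^{-2}$. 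On the interior, the compactly supported function $V\chi_R\varepsilon$ is bounded in $L^2$ by $C(R)\|\varepsilon\|_{L^2_\rho}$, with the gradient piece from $\Lambda'(\Phi_n\varepsilon)$ treated by integrating by parts to transfer the derivative onto the semigroup kernel (managing the resulting short-time singularity by isolating the last unit of elapsed time, where the $L^\infty$ contraction is used in place of the smoothing). The $L^2\to L^\infty$ smoothing then delivers a pointwise bound of order $C(R)Ke^{-\mu s}$ for the inner contribution.

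Collecting all estimates, choosing first $R$ large and then $\delta$ and $s_0$ suitably, one obtains $\|\varepsilon(s)\|_{L^\infty}\leq C(K,K_0)e^{-\mu s}$ with a constant independent of $K'$, $K''$ and $\delta$, which strictly improves the bootstrap assumption \eqref{2.0} once $K'>C(K,K_0)$.
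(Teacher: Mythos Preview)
Your semigroup/Duhamel approach is a genuinely different route from the paper's, which instead writes the equation in divergence form with \emph{all} first-order terms (including $2\Phi_n y\cdot\nabla\varepsilon$ and $2v\,y\cdot\nabla\varepsilon$) placed in the drift, applies local parabolic regularity on a ball $B_R$ to convert the $L^2_\rho$ bound \eqref{1.9} into an interior $L^\infty$ bound, and then runs a comparison/maximum principle argument outside $B_R$ with the constant supersolution $\bar\psi=C'e^{-\mu s}$. The paper's scheme never needs to estimate $y\cdot\nabla\varepsilon$ in $L^\infty$, and the $O(1)$ potential $12\Phi_n$ is handled by the sign structure in the supersolution inequality rather than by smallness.

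Your outline has a concrete gap in the treatment of the inner potential piece. The $L^2\to L^\infty$ smoothing of the Ornstein--Uhlenbeck semigroup in dimension five has the short-time singularity $(1-e^{-\tau})^{-5/4}$, which is \emph{not} integrable at $\tau=0$; this already obstructs the zero-order part $(12\Phi_n+2y\cdot\nabla\Phi_n)\chi_R\varepsilon$, not only the gradient piece. Your proposed fix --- replacing smoothing by the $L^\infty$ contraction on the last unit of time --- yields a feedback of size roughly $\int_0^1 e^{-\tau}d\tau\cdot\|12\Phi_n+2y\cdot\nabla\Phi_n\|_{L^\infty(B_R)}\cdot M\,e^{-\mu s}$ with $M=\sup_\sigma e^{\mu\sigma}\|\varepsilon(\sigma)\|_{L^\infty}$, and since $\|12\Phi_n\|_{L^\infty}\geq 12\Phi_n(0)$ this factor exceeds $1$ and cannot be absorbed. (A workable repair would be to use $L^p\to L^\infty$ smoothing with $p>5/2$ on the last stretch, bounding $\|\chi_R V_0\varepsilon\|_{L^p}$ by interpolation between $L^2_\rho$ and $L^\infty$ and then applying Young's inequality; but this is not what you wrote.)

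There is a second, structural issue: several of your estimates (the outer potential piece, the last-unit treatment of the gradient term, and $\tilde L(\varepsilon)$) invoke $\|y\cdot\nabla\varepsilon\|_{L^\infty}$ via \eqref{ball}, so the resulting constant carries $K''$. Since in the bootstrap hierarchy $K''$ is fixed only \emph{after} $K'$ (Lemma~\ref{acsf} gives $K''=C(K,K')$), a bound of the claimed form $C(K,K_0)$ independent of $K'$ and $K''$ cannot be extracted this way: making $R^{-2}K''$ small forces $R$ to depend on $K''$, which then contaminates the inner constant $C(R)$. The paper avoids this circularity precisely by absorbing first-order terms into the drift. Finally, note that \eqref{weihsg} gives $|v(y)|\le\delta(|y|^{-2}+e^{-s})$, which is not a uniform $L^\infty$ bound near the origin; your use of ``$\|v\|_{L^\infty}\lesssim\delta$'' should be replaced by the bootstrap bound \eqref{2.0} together with \eqref{chca}.
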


\begin{proof}
\emph{$\mathbf{Step\ 1}.$}\ $L^\infty$ estimate inside the ball.
We rewrite the equation \eqref{renormalized equation} in the following form:
\begin{equation}\nonumber
\partial_s\varepsilon=\bar{L}\varepsilon+\bar{F},
\end{equation}
where $\bar{L}\varepsilon=\Delta \varepsilon+T\cdot\nabla \varepsilon+V\varepsilon$ and
$$
T=\left[2\Phi_n+\left(\frac{\lambda_s}{\lambda}+\frac{1}{2}\right)+2v-\frac{1}{2}\right]y,$$
$$ V=12\Phi_n+2y\cdot\nabla\Phi_n+2\left(\frac{\lambda_s}{\lambda}+\frac{1}{2}\right)+2y\cdot\nabla\psi-1,\
\bar{F}=6v^2+\Lambda'(\psi^2)+\text{Mod}.
$$
We know from \eqref{1.9}, \eqref{semin}, Lemma \ref{self-similar profile} and Lemma \ref{self profile} that
\begin{equation}\label{fontaines}
||T(y)||_{L^\infty(B_R)}\lesssim1,\ \ ||V(y)||_{L^\infty(B_R)}\lesssim1,
\end{equation}
where $0<R<\infty$ is arbitrary, and
we obtain similarly that
\begin{equation}\label{fontaines0}
||\bar{F}||_{L^\infty(B_R)}\lesssim C(K)e^{-\mu s},
\end{equation}
for $ s_0$ large enough. We know from \eqref{1.9} that
\begin{equation}\label{jjyx}
||\varepsilon||_{L^2(B_R)}\lesssim e^{-\mu s}.
\end{equation}
Combining  \eqref{1.5}, \eqref{fontaines}, \eqref{fontaines0} and \eqref{jjyx},
we employ parabolic regularity \cite{Ladyzhenskaia} to obtain that, there exists a constant $C'=C'(K,K_0)$ such that
\begin{equation}\label{estimate-inside}
||\varepsilon(s)||_{L^\infty(B_R)}\le \frac{1}{2}C'e^{-\mu s}\ \ s\in[s_0,s^*].
\end{equation}
\emph{$\mathbf{Step\ 2}.$}\ $L^\infty$ bound outside the ball.
 We claim that
\begin{equation}\label{estimate-outside}
 ||\varepsilon||_{L^\infty(|y|\ge R)}\le C'e^{-\mu s},\ \ s\in[s_0,s^*].
\end{equation}
We next use parabolic comparison principle to prove the above claim.
Take $\bar{\psi}(y,s)=C'e^{-\mu s}$. Since \eqref{1.5}, \eqref{estimate-inside} and $K_0$ is small enough, on the boundary, there holds
\begin{equation}\label{dfyus}
\bar{\psi}(s_0,y)\ge |\varepsilon(s_0,y)|,\ \ |y|\ge R,
\end{equation}
and
\begin{equation}\label{qdu}
\bar{\psi}(s,R)\ge |\varepsilon(s,R)|,\ \ s\in[s_0,s^*].
\end{equation}
We compute
\begin{equation}\nonumber
\begin{aligned}
\partial_s(\bar{\psi}-\varepsilon)-\bar{L}(\bar{\psi}-\varepsilon)=\bar{\psi}_s-L\bar{\psi}-\bar{F},\ |y|\ge R.
\end{aligned}
\end{equation}
If we take $R$ and $s_0$ large enough, since $\mu>0$ is small enough, then by \eqref{largex}, \eqref{taxiss}, \eqref{chca}, \eqref{1.9} and \eqref{semin},
we have
\begin{equation}\nonumber
\begin{aligned}
\bar{\psi}_s-\bar{L}\bar{\psi}&=
\bigg[1-\mu-12\Phi_n-2y\cdot\nabla\Phi_n-2\left(\frac{\lambda_s}{\lambda}+\frac{1}{2}\right)-2y\cdot\nabla\psi\bigg]\bar{\psi}\ge\frac{\bar{\psi}}{2}.
\end{aligned}
\end{equation}
For $s_0$  large enough, combining \eqref{taxiss}, \eqref{chca}, \eqref{1.9}, \eqref{2.0} and  \eqref{semin} yields
\begin{equation}\nonumber
\begin{aligned}
|\bar{F}|\le6v^2+|\Lambda'(\psi^2)|+\left|\sum_{j=2}^N[\mu_ja_j-(a_j)_s]\psi_j\right|+\left|\frac{\lambda_s}{\lambda}+\frac{1}{2}\right| \left|\Lambda\Phi_n+\Lambda\psi\right|\le \frac{\bar{\psi}}{2}.
\end{aligned}
\end{equation}
 Therefore, if  $R$  and $s_0$ are sufficiently large, we have
\begin{equation}\nonumber
\begin{aligned}
\partial_s(\bar{\psi}-\varepsilon)-\bar{L}(\bar{\psi}-\varepsilon)\ge0\ \ \text{on}\ |y|\ge R.
\end{aligned}
\end{equation}
Then combining \eqref{dfyus} and \eqref{qdu},  we employ parabolic comparison principle to obtain
$$
\varepsilon(s,y)\le \bar{\psi}\ \ \text{on}\ |y|\ge R.
$$
We can show similarly that
$$
-\bar{\psi}\le \varepsilon(s,y)\ \ \text{on}\ |y|\ge R,
$$
which proves the claim.
 We complete the proof by \eqref{estimate-inside} and \eqref{estimate-outside}.
\end{proof}
\subsection{$L^\infty$ bound of $y^2v/(1+e^{-s}y^2)$}
\begin{lemma}\label{egytp}
There holds the bound
\begin{equation}\label{delta}
\left\|\frac{y^2v}{1+e^{-s}y^2}\right\|_{L^\infty}< \frac{\delta}{2}.
\end{equation}
\end{lemma}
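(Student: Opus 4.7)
The plan is to prove the bootstrap improvement \(\|y^2 v/(1+e^{-s}y^2)\|_{L^\infty}<\delta/2\) by a region-splitting argument parallel to the two-step structure of the proof of Lemma \ref{Linfty}. I write \(v=\varepsilon+\psi\) with \(\psi=\sum_{j=2}^N a_j\psi_j\), and split the spatial domain at a large fixed radius \(R\) chosen independently of \(s_0\). For the inner region \(|y|\le R\), the weight is bounded by \(R^2\), while \eqref{chca} together with the boundedness of the \(\psi_j\) and Lemma \ref{Linfty} give \(\|v\|_{L^\infty(B_R)}\lesssim e^{-\mu s}\). Hence
\[
\sup_{|y|\le R}\frac{y^2|v|}{1+e^{-s}y^2}\le R^2\|v\|_{L^\infty(B_R)}\lesssim R^2 e^{-\mu s}\ll\delta/4
\]
for \(s_0\) large.

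For the outer region \(|y|\ge R\), treat \(\psi\) and \(\varepsilon\) separately. The asymptotics \eqref{taxiss} give \(|\psi_j(y)|\lesssim |y|^{-2}\) for \(|y|\ge R\), so
\[
\frac{y^2|\psi|}{1+e^{-s}y^2}\le y^2|\psi|\lesssim\sum_{j=2}^N|a_j|\lesssim e^{-\mu s}\ll\delta/8.
\]
For \(\varepsilon\), I apply the parabolic comparison principle to the equation \(\partial_s\varepsilon=\bar L\varepsilon+\bar F\) (notation from the proof of Lemma \ref{Linfty}), using the barrier
\[
\bar\Psi(s,y)=A\,\frac{1+e^{-s}y^2}{y^2},\qquad A=\delta/16.
\]
Domination on \(\{s=s_0\}\) follows from \eqref{1.5} once \(A\ge K_0 e^{-\mu s_0}\) (true for \(s_0\) large), and on \(\{|y|=R\}\) from Lemma \ref{Linfty} since \(|\varepsilon(s,R)|\lesssim e^{-\mu s}\ll A/R^2\). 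The supersolution property \((\partial_s-\bar L)\bar\Psi\ge|\bar F|\) is checked by direct computation, using the smallness of \(|\lambda_s/\lambda+1/2|\) and \(|(a_j)_s-\mu_j a_j|\) from Lemma \ref{modulation}, the decay rates \eqref{largex} and \eqref{taxiss} of \(\Phi_n\) and \(\psi_j\), and the bootstrap pointwise control of \(|v|\): the diffusive gain \(-\Delta\bar\Psi=2A/|y|^4\) combined with expansions of \(-T\cdot\nabla\bar\Psi\) and \(-V\bar\Psi\) dominates \(\bar F\), whose size is \(O(e^{-\mu s}/|y|^2)\) in the intermediate range \(R\le|y|\le e^{s/2}\) and exponentially smaller in the far field. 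The comparison principle then gives \(|\varepsilon|\le\bar\Psi\) on \(|y|\ge R\), so \(y^2|\varepsilon|/(1+e^{-s}y^2)\le A=\delta/16\).

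Combining the three contributions yields \(\|y^2 v/(1+e^{-s}y^2)\|_{L^\infty}<\delta/4+\delta/8+\delta/16<\delta/2\), closing the bootstrap. The main obstacle is the supersolution inequality at large \(|y|\), where the leading orders of \(-T\cdot\nabla\bar\Psi\) and \(-V\bar\Psi\) cancel, so that the net gain comes from subleading expansions together with \(-\Delta\bar\Psi\) and must beat the modulation piece of \(\bar F\), which is \(O(e^{-2\mu s}/|y|^2)\), across the critical scale \(|y|\sim e^{\mu s}\). The non-sign-definite contribution \(2v\cdot y\) to \(T\) is the most delicate piece; it is absorbed using \eqref{weihsg} itself, giving \(|v|\lesssim\delta/|y|^2\) on \(|y|\le e^{s/2}\) and \(|v|\lesssim\delta e^{-s}\) on \(|y|\ge e^{s/2}\), making this term a small perturbation of the diffusive gain provided \(\delta\) is small and \(R, s_0\) are large.
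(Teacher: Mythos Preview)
Your barrier \(\bar\Psi(s,y)=A(1+e^{-s}y^2)/y^2\) is \emph{not} a supersolution of \(\partial_s-\bar L\) in the outer region, so the comparison argument for \(\varepsilon\) fails. After the leading cancellation you correctly identify (the constant parts of \(-T\cdot\nabla\bar\Psi\) and \(-V\bar\Psi\) cancel \(\partial_s\bar\Psi\)), the next-order contributions come from the \(\Phi_n\)-dependent pieces of \(T\) and \(V\). Using \(\Phi_n\sim\tilde c_n/y^2\) and \(y\cdot\nabla\Phi_n\sim -2\tilde c_n/y^2\) one computes, for \(|y|\ge R\),
\[
(\partial_s-\bar L)\bar\Psi \;=\; \frac{2A(1-2\tilde c_n)}{y^4}\;-\;\frac{8A\tilde c_n e^{-s}}{y^2}\;+\;\text{(smaller terms)}.
\]
Since \(\tilde c_n\in(0,2]\) and in particular \(\tilde c_0=2\) for \(\Phi_0=2/(2+|y|^2)\), the right-hand side is \(-6A/y^4-16Ae^{-s}/y^2<0\) for \(n=0\), and the second term is negative for all \(n\) and dominates for \(|y|\gtrsim e^{s/2}\). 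So the ``diffusive gain'' \(2A/y^4\) is overwhelmed by the potential corrections; there is no positive quantity left to absorb \(|\bar F|\). The non-sign-definite term \(2v\) in \(T\) that you flag is not the obstruction --- the obstruction is the sign-definite \(\Phi_n\) contribution to \(V\), which you cannot make small on a fixed exterior \(\{|y|\ge R\}\).

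The paper closes this gap with two devices you are missing. First, it works with the equation for \(v\) (not \(\varepsilon\)) and uses the barrier
\[
\hat\psi(s,y)=\frac{\delta}{2y^2}\,(1+e^{-s}y^2)\,(1-\hat K e^{-\kappa s}),\qquad \kappa\ll\mu,
\]
whose time derivative contributes the strictly positive term \(\frac{\delta}{2y^2}(1+e^{-s}y^2)\hat K\kappa e^{-\kappa s}\); this is what actually dominates the forcing and the subleading potential terms. Second, it takes a \emph{time-growing} outer boundary \(R(s)=e^{\nu s}\), so that in the comparison region \(\Phi_n\), \(y\cdot\nabla\Phi_n=O(e^{-2\nu s})\) become genuinely lower order and can be absorbed by the \(\hat K\kappa e^{-\kappa s}\) gain (with \(\kappa\ll\nu\)). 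Your static barrier on a fixed exterior has neither mechanism, and cannot be repaired without essentially introducing them.
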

\begin{proof}
\emph{$\mathbf{Step\ 1}.$}\ $L^\infty$ bound inside the ball.
We recall
$$
v=\varepsilon+\psi
$$
and we compute the evolution of $v$:
\begin{equation}\label{thies}
\partial_{s} v+L_n v=6v^2+\Lambda'(v^2)+\left(\frac{\lambda_s}{\lambda}+\frac{1}{2}\right) \left(\Lambda\Phi_n+\Lambda v\right).
\end{equation}
Since the eigenvectors $\psi_j$ are bounded for $j=2,\cdots N$, then by \eqref{chca} and \eqref{dfyusd}, we have
\begin{equation}\label{ocrang}
\left\|v\right\|_{L^\infty}\lesssim e^{-\mu s}.
\end{equation}
% Let $\hat{v}=y^2v$, then $\hat{v}$ satisfies
%$$
%\hat{v}_s=\hat{L}\hat{v}+\hat{F},
%$$
%where $$
%\hat{F}=-10v-4y\cdot\nabla v+\left(\frac{\lambda_s}{\lambda}+\frac{1}{2}\right)y^2\Lambda\Phi_n+2y\cdot\nabla\Phi_ny^2v+8\Phi_ny^2v,
%$$
%and $\hat{L}\hat{v}=\Delta \hat{v}+\hat{T}\cdot\nabla \hat{v}+\hat{V}\hat{v}$ where
%$$
%\hat{T}=\left[2\Phi_n+\left(\frac{\lambda_s}{\lambda}+\frac{1}{2}\right)-\frac{1}{2}\right]y,\ \ \ \hat{V}=6v+2y\cdot\nabla v.
%$$
Take any $0<R<\infty$. We know from \eqref{ocrang} that
\begin{equation}\label{oras2}
\left\|\frac{y^2v(s)}{1+e^{-s}y^2}\right\|_{{L^\infty}(B_R)}\le \frac{\delta}{4},\ \ s\in[s_0,s^*],
\end{equation}
for $s_0$ large enough.\\
\emph{$\mathbf{Step\ 2}.$}\ $L^\infty$ bound outside the ball. We rewrite \eqref{thies} in the following form:
\begin{equation}\label{yjs}
\begin{aligned}
\partial_s{v}+\hat{L}v=\hat{F},
\end{aligned}
\end{equation}
where
\begin{align}
\nonumber & \hat{L}v=-\Delta {v}+\hat{T}\cdot\nabla {v}+\hat{V}{v}, \\
\label{yjs1} & \hat{T}=\left[\frac{1}{2}-2\Phi_n-\left(\frac{\lambda_s}{\lambda}+\frac{1}{2}\right)-2v\right]y,\\
\label{yjs2} & \hat{V}=1-2y\cdot\nabla\Phi_n-12\Phi_n-2\left(\frac{\lambda_s}{\lambda}+\frac{1}{2}\right),\ \ \hat{F}=\left(\frac{\lambda_s}{\lambda}+\frac{1}{2}\right)\Lambda\Phi_n+6v^2.
\end{align}
We take $\nu>0$ and $R(s)=e^{\nu s}$.
Let $$\hat{\psi}(s,y)=\frac{\delta}{2y^2}(1+e^{-s}y^2)(1-\hat{K}e^{-\kappa s}),$$
where $\kappa\ll \mu, \nu$, and $\hat{K}>0$ is a  constant to be fixed later on. By \eqref{1.5} and \eqref{oras2}, on the boundary, we have
\begin{equation}\label{oras3}
|v(s_0,y)|\le \hat{\psi}(s_0,y),\ \ |y|\ge R(s_0),
\end{equation}
and
\begin{equation}\label{qdu1}
|v(s,y)|\le \hat{\psi}(s,y),\ \ |y|= R(s),\ \ s\in[s_0,s^*],
\end{equation}
 for $s_0$ large enough.
By \eqref{largex}, \eqref{chca}, \eqref{1.9}, \eqref{semin} and \eqref{ocrang}, since $\kappa\ll \mu, \nu$, for $y\ge R(s)$,  there holds
 \begin{equation}\label{h2s}
\begin{aligned}
 \partial_s{\hat{\psi}}+\hat{L}{\hat{\psi}}&=
 \frac{\delta}{2y^2}\left(e^{-s}y^2+1\right)\hat{K}\kappa e^{-\kappa s}\\
 &\ \ \ + \frac{\delta}{2y^2}(1- \hat{K}e^{-\kappa s})\left[\frac{2}{y^2}-e^{-s}y^2-2\left(\frac{1}{2}-2\Phi_n-\left(\frac{\lambda_s}{\lambda}+\frac{1}{2}\right)-2v\right)\right]\\
 &\ \ \ +\frac{\delta}{2y^2}(1-  \hat{K}e^{-\kappa s})\left(e^{-s}{y^2}+1\right)\left[1-2y\cdot\nabla\Phi_n-12\Phi_n-2\left(\frac{\lambda_s}{\lambda}+\frac{1}{2}\right)\right]\\
 &= \frac{\delta}{2y^2}\left(e^{-s}y^2+1\right)\hat{K}\kappa e^{-\kappa s}+ \frac{\delta}{2y^2}(1- \hat{K}e^{-\kappa s})\bigg[\frac{2}{y^2}+4\Phi_n+2\left(\frac{\lambda_s}{\lambda}+\frac{1}{2}\right)+4v\\
 &\ \ \ -
 \left(e^{-s}{y^2}+1\right)\left(2y\cdot\nabla\Phi_n+12\Phi_n+2\left(\frac{\lambda_s}{\lambda}+\frac{1}{2}\right)\right)\bigg]\\
 &\ge \frac{\delta}{4y^2}\left(e^{-s}y^2+1\right)\hat{K}\kappa e^{-\kappa s},
\end{aligned}
\end{equation}
 for $s_0$ and $\hat{K}$ large enough. By \eqref{largex}, \eqref{chca}, \eqref{1.9}, \eqref{weihsg}, \eqref{semin} and \eqref{ocrang}, we get
\begin{equation}\label{h2s1}
 |\hat{F}|\lesssim \frac{1}{y^2}e^{-2\mu s}+e^{-\mu s}\left(\frac{1}{y^2}+e^{-s}\right),
\end{equation}
From \eqref{h2s} and \eqref{h2s1}, since $\kappa\ll \mu, \nu$, we have
\begin{equation}\label{h2s01}
 \partial_s({\hat{\psi}}-v)+\hat{L}({\hat{\psi}}-v)=\partial_s{\hat{\psi}}+\hat{L}{\hat{\psi}}-\hat{F}\ge 0,\ \ y\ge R(s),
\end{equation}
for $s_0$ and $\hat{K}$ large enough.
then by \eqref{oras3}, \eqref{qdu1} and parabolic comparison principle yields
\begin{equation}\nonumber
v\le \hat{\psi}< \frac{\delta}{2y^2}(1+e^{-s}y^2),\ \ \text{on}\ |y|\ge R(s).
\end{equation}
Similarly, there holds
\begin{equation}\label{oras3q}
-\frac{\delta}{2y^2}(1+e^{-s}y^2)<-\hat{\psi}\le v ,\ \ \text{on}\ |y|\ge R(s),
\end{equation}
then combining \eqref{oras2} we have
$$
\left\|\frac{y^2v}{1+e^{-s}y^2}\right\|_{L^\infty}< \frac{\delta }{2},
$$
which completes the proof.
\end{proof}

\subsection{$L^\infty$ bound of $y\cdot\nabla v$}
\begin{lemma}\label{acsf}
There holds the bound
\begin{equation}\label{delta1}
||y\cdot\nabla v||_{L^\infty}\lesssim C(K,K')e^{-\mu s}.
\end{equation}
\end{lemma}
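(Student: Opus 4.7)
The plan is to mimic the two-region strategy of Lemma \ref{egytp}: control $g := \Lambda' v = y\cdot\nabla v$ inside a fixed ball $B_R$ via parabolic interior regularity, and extend the bound to $|y|\geq R$ by a parabolic comparison argument using the constant-in-$y$ barrier $\bar\psi(s) = C(K,K')e^{-\mu s}$.

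For the inside estimate, combining \eqref{chca}, \eqref{dfyusd} and the uniform boundedness of the eigenfunctions $\psi_j$ gives $\|v\|_{L^\infty}\lesssim C(K,K')e^{-\mu s}$. On $B_{2R}$, the coefficients $\hat T$ and $\hat V$ of the equation \eqref{thies} are uniformly bounded by Lemma \ref{self-similar profile}, Lemma \ref{modulation} and the bootstrap, while $\|\hat F\|_{L^\infty(B_{2R})}\lesssim C(K,K')e^{-\mu s}$. Standard parabolic interior $L^\infty$-to-$C^1$ estimates applied on cylinders $B_{2R}\times[s-1,s]$ then yield $\|\nabla v\|_{L^\infty(B_R)}\lesssim C(K,K')e^{-\mu s}$, hence $\|y\cdot\nabla v\|_{L^\infty(B_R)}\lesssim C(K,K')e^{-\mu s}$.

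For the outside estimate, I apply the operator $\Lambda'$ to \eqref{thies}, use the commutator identities $[\Lambda',\Delta]=-2\Delta$, $[\Lambda',\Lambda]=0$ and the Leibniz rule, and then eliminate the resulting $-2\Delta v$ source term by substituting $\Delta v = \partial_s v + \hat T\cdot\nabla v + \hat V v - \hat F$ from \eqref{thies}; the $\hat T\cdot\nabla v = hg$ contribution is absorbed into the zeroth-order coefficient. The outcome is an evolution equation
\begin{equation*}
\partial_s g - \Delta g + \hat T\cdot\nabla g + \tilde V g = \tilde F - 2\partial_s v,
\end{equation*}
with $\tilde V = \hat V + \Lambda' h + 2h$ (where $h=1/2-2\Phi_n-(\lambda_s/\lambda+1/2)-2v$) tending to $2$ as $|y|\to\infty$ by \eqref{largex}, \eqref{taxiss} and Lemma \ref{modulation}, so $\tilde V\geq 1$ on $|y|\geq R$ for $R$ and $s_0$ large; and with $\tilde F = \Lambda'\hat F - 2\hat V v + 2\hat F - v\Lambda'\hat V$ bounded in $L^\infty$ by $C(K,K')e^{-\mu s}$ via the bootstrap and the decay of $\Phi_n,\psi_j$. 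The remaining $-2\partial_s v$ term is controlled by writing $\partial_s v = \partial_s\varepsilon + \partial_s\psi$: the modulation part $\partial_s\psi = \sum_{j=2}^N(a_j)_s\psi_j$ is estimated in $L^\infty$ by Lemma \ref{modulation} and \eqref{chca}, while $\partial_s\varepsilon$ is bounded via parabolic interior regularity applied to \eqref{renormalized equation}. With all source terms of size $\lesssim C(K,K')e^{-\mu s}$, the barrier $\bar\psi$ dominates $|g|$ on the parabolic boundary $\{|y|=R\}\cup\{s=s_0\}$ by the inside step and \eqref{1.5} provided $C(K,K')$ is large enough, while $(\partial_s+\tilde L)\bar\psi = (\tilde V - \mu)\bar\psi\geq (1-\mu)\bar\psi$ exceeds the source. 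The parabolic comparison principle on $\{|y|\geq R\}\times[s_0,s^*]$ then gives $|g|\leq\bar\psi$.

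The main obstacle is the appearance of the second-order quantity $\Delta v$ in the source of the equation for $g$, which is not directly bounded by the bootstrap. Eliminating it via the $v$-equation introduces a $\partial_s v$ term whose uniform $L^\infty$ control requires propagating parabolic regularity to the remainder $\varepsilon$; ensuring simultaneously that the effective zeroth-order coefficient $\tilde V$ remains positive for the barrier argument constitutes the technical heart of the proof.
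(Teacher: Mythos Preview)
Your two-region strategy and derivation of the equation for $g=y\cdot\nabla v$ agree with the paper's proof: the paper likewise uses parabolic regularity on $B_R$ (obtaining in fact $\|v\|_{C^2(B_R)}\lesssim e^{-\mu s}$) and then runs a comparison principle on $\{|y|\ge R\}$ with the constant-in-$y$ barrier $\tilde C Re^{-\mu s}$ for the same equation $z_s+\tilde L z=\tilde F$, whose source $\tilde F$ contains the term $-2\Delta v$. The paper simply asserts the comparison inequality \eqref{compa} citing only \eqref{largex}, \eqref{chca}, \eqref{1.9}, \eqref{2.0}, none of which bound $\Delta v$ in the exterior; so you have in fact isolated the one delicate point that the paper glosses over.

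Your proposed remedy, however, trades one uncontrolled quantity for another. After substituting $\Delta v=\partial_s v+hg+\hat V v-\hat F$ (which correctly turns the zeroth-order coefficient into $\hat V+\Lambda' h+2h\to 2$), you need $\|\partial_s v\|_{L^\infty(\{|y|\ge R\})}\lesssim e^{-\mu s}$. The modulation piece $\partial_s\psi$ is indeed fine via Lemma~\ref{modulation}, but ``parabolic interior regularity applied to \eqref{renormalized equation}'' only controls $\partial_s\varepsilon$ on compact $y$-sets, with constants that deteriorate as $|y|\to\infty$ because the drift $\hat T\sim\tfrac12 y$ is unbounded; the comparison is being run precisely on the non-compact exterior. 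A clean fix avoiding both $\Delta v$ and $\partial_s v$ is to differentiate with $\partial_r$ rather than $r\partial_r$: since $[\partial_r,\Delta_{\mathrm{rad}}]=-\tfrac{4}{r^2}\partial_r$ is first order, the resulting equation for $p=\partial_r v$ has source of size $O(r^{-3}e^{-\mu s})$ and an additional positive potential $\tfrac{4}{r^2}$, and a barrier of the form $Cr^{-1}e^{-\mu s}$ then closes directly to give $|y\cdot\nabla v|=r|p|\lesssim e^{-\mu s}$.
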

\begin{proof}
\emph{$\mathbf{Step\ 1}.$}\ $L^\infty$ bound of $y\cdot\nabla v$ inside the ball.
We recall \eqref{yjs} that
$$
\partial_s{v}+\hat{L}v=\hat{F}
$$
where $\hat{L}v=-\Delta {v}+\hat{T}\cdot\nabla {v}+\hat{V}{v}$. See \eqref{yjs1} and \eqref{yjs2} for the precise definitions of $\hat{T}$, $\hat{V}$ and $\hat{F}$.

%We rewritten \eqref{thies} in the following form:
%\begin{equation}\nonumber
%\begin{aligned}
%\partial_s{v}-\Delta {v}+T'\cdot\nabla {v}+V'{v}=F',
%\end{aligned}
%\end{equation}
%where
%$$
%T'=\left[\frac{1}{2}-2\Phi_n-\left(\frac{\lambda_s}{\lambda}+\frac{1}{2}\right)-2v\right]y,
%$$
%$$
%V'=1-2y\cdot\nabla\Phi_n-12\Phi_n-2\left(\frac{\lambda_s}{\lambda}+\frac{1}{2}\right),
%$$
%and
%$$
%F'=\left(\frac{\lambda_s}{\lambda}+\frac{1}{2}\right)\Lambda\Phi_n+6v^2.
%$$
Take any $0<R<\infty$. We know from \eqref{largex}, \eqref{chca}, \eqref{1.9}, \eqref{2.0} and \eqref{semin} that
\begin{equation}\label{tidss1}
||\hat{T}(y)||_{L^\infty(B_{2R})}\lesssim1,\ ||\hat{V}(y)||_{L^\infty(B_{2R})}\lesssim1
 \end{equation}
 and
\begin{equation}\label{tidss2}
||\hat{F}||_{L^\infty(B_{2R})}\lesssim e^{-\mu s}.
\end{equation}
 We know from \eqref{chca} and \eqref{1.9} that
\begin{equation}\label{L2}
||v||_{L^2(B_{2R})}\lesssim e^{-\mu s}.
\end{equation}
By introducing a smooth localized function in time, combining \eqref{tidss1}, \eqref{tidss2} and \eqref{L2} and employing parabolic regularity, we obtain
\begin{equation}\label{tidss45}
||v(s)||_{W^{2,1;p}(B_{2R})}\lesssim  e^{-\mu s},\ s\in[s_0+\delta_0,s^*],
\end{equation}
where $1<p<\infty$ and $\delta_0$ is small, then by the Sobolev embedding and Schauder estimate we know that
\begin{equation}\label{tidssozx}
|| v(s)||_{C^2(B_R)}\lesssim  e^{-\mu s},\ s\in[s_0+2\delta_0,s^*].
\end{equation}
Hence, there exists a positive constant $\tilde{C}=\tilde{C}(K,K')$ such that
\begin{equation}\label{nh}
||y\cdot \nabla v(s)||_{L^\infty(B_R)}\le \frac{\tilde{C}}{2}Re^{-\mu s},\ s\in[s_0+2\delta_0,s^*].
\end{equation}
\emph{$\mathbf{Step\ 2}.$}\ $L^\infty$ bound of $y\cdot\nabla v$ outside the ball.
Set
$$
z=y\cdot\nabla v.
$$
We now use the commutator relation
$$
[\Delta,\Lambda]=2\Delta
$$
to find that
$$
z_s+\tilde{L} z=\tilde{F}
$$
where $\tilde{L} z=-\Delta z+\tilde{T}\cdot\nabla z+\tilde{V}z$ and
$$
\tilde{T}=\left[\frac{1}{2}-2\Phi_n-\left(\frac{\lambda_s}{\lambda}+\frac{1}{2}\right)-2v\right]y,
$$
$$
\tilde{V}=1-4y\cdot\nabla\Phi_n-12\Phi_n-2\left(\frac{\lambda_s}{\lambda}+\frac{1}{2}\right)-12v
$$
and
$$
\tilde{F}=\left(\frac{\lambda_s}{\lambda}+\frac{1}{2}\right)y\cdot\nabla\Lambda\Phi_n
+2z^2-\big[2\Delta v-2vy\cdot\nabla(y\cdot\nabla\Phi_n)-12y\cdot\nabla\Phi_n v\big]$$
We claim that
\begin{equation}\label{compaq}
 ||z(s)||_{L^\infty(|y|\ge R)}\le \tilde{C}Re^{-\mu s}\ \ s\in[s_0+2\delta_0,s^*].
\end{equation}
We next use parabolic comparison principle to prove the above estimate.
Let $\tilde{\psi}(y,s)=\tilde{C}Re^{-\mu s}$. By \eqref{nh}, on the lateral boundaty, we have
\begin{equation}\label{dfyuss}
\tilde{\psi}(s,R)\ge |z(s,R)|,\ \ s\in[s_0+2\delta_0,s^*],
\end{equation}
By \eqref{1.5}, and by continuity and differentiable of the solution, there holds
$$
 ||z(s_0+2\delta_0)||_{L^\infty}\le \tilde{C}Re^{-\mu s},
$$
for $K_0$ small enough. Hence
\begin{equation}\label{qdjj}
\tilde{\psi}(s_0+2\delta_0,y)\ge |z(s_0+2\delta_0,y)|,\ \ |y|\ge R.
\end{equation}
Since $\mu>0$ is small, then combining \eqref{largex}, \eqref{chca}, \eqref{1.9} and \eqref{2.0}, we have
\begin{equation}\label{compa}
\begin{aligned}
&\partial_s(\tilde{\psi}-z)-\Delta (\tilde{\psi}-z)+\tilde{T}\cdot\nabla (\tilde{\psi}-z)+\tilde{V}(\tilde{\psi}-z)=\tilde{C}Re^{-\mu s}(\tilde{V}-\mu)-\tilde{F}\ge0,
\end{aligned}
\end{equation}
for $R$ and $s_0$ large enough. By \eqref{dfyuss}, \eqref{qdjj} and \eqref{compa}, we employ the comparison principle to find that
$$
z\le \tilde{\psi}, \ \ \text{on}\ \{|y|\ge R\}\times [s_0+2\delta_0,s^*].
$$
We can also show similarly that
$$
-\tilde{\psi}\le z,
\ \ \text{on}\ \{|y|\ge R\}\times [s_0+2\delta_0,s^*],
$$
which proves the claim.
Combining \eqref{nh} and \eqref{compaq}, we get
\begin{equation}\label{cyt}
 ||z(s)||_{L^\infty}\le \tilde{C}Re^{-\mu s},\ \ s\in[s_0+2\delta_0,s^*].
\end{equation}
\emph{$\mathbf{Step\ 3}.$}\  Estimate in the time interval $[s_0,s_0+2\delta_0]$.  Combining \eqref{1.5}, then by continuity and differentiable of the solution, there holds
\begin{equation}\label{cyt1}
 ||z(s)||_{L^\infty}\le \tilde{C}Re^{-\mu s},\ \ [s_0,s_0+2\delta_0],
\end{equation}
for $K_0$ small enough.
We finalize the proof of this lemma by amalgamating \eqref{cyt} and \eqref{cyt1}.
%\emph{$\mathbf{Step\ 3}.$}\ $L^\infty$ bound of $y\cdot\nabla v$ outside the ball.
%%For $s_0$ large enough, we have
%%$$
%%|\tilde{F}|\le \tilde{\tilde{C}}(K,K')e^{-\mu s}.
%%$$
%We claim that
%\begin{equation}\label{compaq}
% ||z(s)||_{L^\infty(|y|\ge R)}\le \tilde{C}e^{-\mu s}\ \ s\in[s_0+2\delta,s^*].
%\end{equation}
%We next use parabolic comparison principle to prove the above estimate.
%Take $\tilde{\psi}(r,s)=\tilde{C}e^{-\mu s}$. On the boundary,
%\begin{equation}\label{dfyuss}
%\tilde{\psi}(s_0+2\delta,y)\ge |z(s_0+2\delta,y)|,\ \ |y|=R.
%\end{equation}
%Since $\mu>0$ is small, then combining \eqref{largex}, \eqref{chca}, \eqref{1.9} and \eqref{2.0}, we have
%\begin{equation}\label{compa}
%\begin{aligned}
%&\partial_s(\tilde{\psi}-\varepsilon)-\Delta (\tilde{\psi}-\varepsilon)+\tilde{T}\cdot\nabla (\tilde{\psi}-\varepsilon)+\tilde{V}(\tilde{\psi}-\varepsilon)=\tilde{C}e^{-\mu s}(\tilde{V}-\mu)-\tilde{F}\ge0,
%\end{aligned}
%\end{equation}
%for $R$ and $s_0$ large enough. By \eqref{dfyuss} and \eqref{compa}, we employ the comparison principle to find that
%$$
%\tilde{\psi}\ge z, \ \ \text{on}\ \{|y|\ge R\}\times [s_0+2\delta,s^*].
%$$
%We can show similarly that
%$$
%-\tilde{\psi}\le z,
%\ \ \text{on}\ \{|y|\ge R\}\times [s_0+2\delta,s^*],
%$$
%which proves the claim.
%
%Due to the smallness of $\delta>0$ and the parabolic regularity, we finalize the proof of this lemma by amalgamating \eqref{nh} and \eqref{compaq}.
\end{proof}

\subsection{Conclusion}
We are now in position to conclude the proof of Proposition \ref{bootstrap}.
\begin{proof}[Proof of Proposition \ref{bootstrap}]
Let us first recall the exit time
$$
s^*=\sup\{s\ge s_0\ \text{such that}\ \eqref{1.7}-\eqref{ball}\ \text{holds on}\ [s_0,s) \}.
$$
\emph{$\mathbf{Step\ 1}.$}\ Improved scaling control. We estimate from \eqref{chca},  \eqref{1.9} and \eqref{semin}:
\begin{equation}\label{fky}
\left|\frac{\lambda_s}{\lambda}+\frac{1}{2}\right|\lesssim K^2e^{-2\mu s},
\end{equation}
after integration we have
$$
\left|\log\left(\frac{\lambda(s)}{\lambda(s_0)}\right)+\frac{s-s_0}{2}\right|\lesssim K^2\int_{s_0}^\infty e^{-2\mu \tau}d\tau\lesssim1+o(1)
$$
for $s_0$ large enough. Then by \eqref{smallness0}, we obtain
\begin{equation}\nonumber
\lambda(s)=(\lambda(s_0)e^{\frac{s_0}{2}})e^{-\frac{s}{2}}(1+o(1))=e^{-\frac{s}{2}}(1+o(1))
\end{equation}
and hence
\begin{equation}\label{poaf}
\frac{e^{-\frac{s}{2}}}{2}\le\lambda(s)\le 2e^{-\frac{s}{2}}.
\end{equation}
\emph{$\mathbf{Step\ 2}.$}\ Improved weighted Sobolev bounds.\\
\emph{$L^2_\rho$ bound.} We know from \eqref{chca} and \eqref{Eys}  that
\begin{equation}\nonumber
\begin{aligned}
\frac{d}{ds}||\varepsilon||_{L^2_\rho}^2+c_n||\varepsilon||_{H^1_\rho}^2\lesssim e^{-4\mu s}\le e^{-2\mu s}
\end{aligned}
\end{equation}
for $ s_0$ large enough. We next define
\begin{equation}\label{parameter}
0<\mu\le\frac{c_n}{4},
\end{equation}
then
$$
\frac{d}{ds}||\varepsilon||_{L^2_\rho}^2+4\mu||\varepsilon||_{H^1_\rho}^2\le  e^{-2\mu s}.
$$
By Gronwall inequality and \eqref{1.5}, there holds
\begin{equation}\label{iiz}
\begin{aligned}
&||\varepsilon(s)||_{L^2_\rho}^2+2\mu e^{-2\mu(s-s_0))}\int_{s_0}^s||\varepsilon(\tau)||_{L^2_\rho}^2d\tau\\
&\le e^{-2\mu s} (e^{2\mu s_0}||\varepsilon(s_0)||_{L^2_\rho}^2)+ e^{-2\mu s}e^{2\mu s_0}\int_{s_0}^se^{-2\mu \tau}d\tau\lesssim K_0^2e^{-2\mu s}.
\end{aligned}
\end{equation}
%\emph{$\mathbf{Step\ 3}.$}\ Improved $L^\infty$ norm for $\varepsilon$, $y^2 v$ and $y\cdot\nabla v$. For $s\in [s_0,s^*]$, we know from Lemma \ref{Linfty}, Lemma \ref{egytp} and Lemma \ref{egytp} that
%$$
%||\varepsilon||_{L^\infty}\le C(K,K_0)e^{-\mu s},
%$$
%\begin{equation}\nonumber
%||y^2 v||_{L^\infty}\le C(K_0,K,K',K''),\ \ ||y\cdot\nabla v||_{L^\infty}\lesssim C(K_0,K,K')e^{-\mu s},
%\end{equation}
%which means that the estimates \eqref{2.0}, \eqref{weihsg} and \eqref{ball} can be improved.\\
\emph{$\mathbf{Step\ 3}$.}\ The Brouwer fixed point theorem. Combining the definitions of $s^*$ and the contradiction assumption \eqref{sdzx}, along with  \eqref{dfyusd}, \eqref{delta}, \eqref{delta1}, \eqref{poaf}, and \eqref{iiz}, then through a continuity argument and \eqref{chca}, we have
\begin{equation}\label{s*}
\sum_{j=2}^N|a_j(s^*)|^2=e^{-2\mu s^*}.
\end{equation}
From \eqref{chca}, \eqref{1.9} and \eqref{semin}, we obtain
\begin{equation}\nonumber
\begin{aligned}
\frac{1}{2}\frac{d}{ds}\sum_{j=2}^N|a_je^{\mu s}|^2&=\sum_{j=2}^Na_je^{2\mu s}[(\mu_j+\mu) a_j+((a_j)_s-\mu_ja_j)]\ge\mu\sum_{j=2}^N|a_j|^2e^{2\mu s}+O(K^2e^{-\mu s}),
\end{aligned}
\end{equation}
then there holds
\begin{equation}\label{fas}
\left(\frac{1}{2}\frac{d}{ds}\sum_{j=2}^N|a_je^{\mu s}|^2\right)(s^*)\ge \mu+O(K^2e^{-\mu s_0})>0
\end{equation}
for $s_0$ large enough, which means that the vector field is strictly outgoing. Let $\mathcal{B}$ denote the unit ball in $\mathbb{R}^{N-1}$.
We define the map $f:\mathcal{D}(f)\subset\mathcal{B}\to\partial\mathcal{B}$ by
$$
f:(a_j(0)e^{\mu s_0})_{2\le j\le N}\mapsto(a_{j}(s^*)e^{\mu s^*})_{2\le j\le N},
$$
where
$$
\mathcal{D}(f)=\{(a_j(0)e^{\mu s_0})_{2\le j\le N}: (a_{j,0}e^{\mu s_0})_{2\le j\le N}\in \mathcal{B} \text{ and  exit time $s^*$\ is\ finite}\}.
$$
If we take $(a_j(0)e^{\mu s_0})_{2\le j\le N}\in \partial\mathcal{B}$, then by \eqref{s*} and \eqref{fas}, there holds $\sum_{j=2}^N|a_j(s)|^2e^{2\mu s}>1$ for any $s>s_0$. Hence, $\partial {\mathcal{B}}\subset\mathcal{D}(f)$ and $f$ is the identity on $\partial {\mathcal{B}}$. Since the unstable modes $(a_j(s))_{2\le j\le N}$ depend continuouly with respect to the inital data of the unstable modes $(a_j(0))_{2\le j\le N}$, and  the vector field satisfies the strictly outgoing property \eqref{fas}, then $\mathcal{D}(f)$ is open and $f$ is continuous on $\mathcal{D}(f)$.

Since $f$
is continuous in $\mathcal{B}$, and the identity on the boundary, then this implies that $\partial {\mathcal{B}}$ is a retract of $\mathcal{B}$, but it contradicts to Brouwer's fixed theorem \cite{Brouwer}.
Therefore, we have $s^*=\infty$, which completes the proof of Proposition \ref{bootstrap}.
\end{proof}

Next we present the proof of Theorems \ref{theorem-1} and \ref{theorem} by proposition \ref{bootstrap}.
\begin{proof}[Proof of Theorems \ref{theorem-1} and \ref{theorem}]

The desired functions $(\varphi_j)_{1\leq j\leq N-1}$ are defined as follows. We take $R>0$ large enough and let, for $1\leq j\leq N-1$,
$$
\varphi_j(x)=\bar \phi_{j+1}(x),
$$
where $\bar \phi_{j}$ is defined in \eqref{id:def-bar-phij}. The desired set is then defined as $V_n=\textup{Span}(\varphi_1,...,\varphi_{N-1})^\perp$. We recall that $E_n$ is defined by \eqref{id:def-En}. Then any $v_0\in V_n$ can be uniquely decomposed as
$$
v_0=\bar u_0+\sum_2^N b_{j,0}(v_0)\bar \phi_j
$$
where $\bar u_0\in E_n$, and $b_{j,0}$ is a continuous linear function of $v_0$ for the $L^\infty$ topology. By \eqref{id:def-bar-psij}, the map $v_0\mapsto \bar u_0$ is an isomorphism from $V_n\cap L^\infty$ to $E_n$. We apply Proposition \ref{bootstrap} to $\bar u_0$, and use Lemma \ref{lem:change-decomposition}. This gives the existence of $(\bar a_{j,0}(v_0))_{2\leq j \leq N}$ such that, for the initial data
\begin{align} \label{id:initial-data-theorem-main}
u_0 & =\frac{1}{\bar \lambda_0^2}\left(U_n+\bar u_0+\sum_2^N \bar a_{j,0}(v_0)\bar \phi_j \right)\left(\frac{x}{\bar \lambda_0}\right) \\
\nonumber & =\frac{1}{\bar \lambda_0^2}\left(U_n+v_0+\sum_1^{N-1} (\bar a_{j+1,0}(v_0)-b_{j+1,0}(v_0))\varphi_j \right)\left(\frac{x}{\bar \lambda_0}\right) ,
\end{align}
the corresponding solution $u(t)$ satisfies the conclusion of Proposition \ref{bootstrap}. The desired functions $(c_j(v_0))_{1\leq j \leq N}$ in the statement of Theorem \ref{theorem} are then defined as
$$
c_{j}(v_0)=\bar a_{j+1,0}(\bar u_0)-b_{j+1,0}(v_0)
$$
for $1\leq j \leq N-1$. We will show in the next Subsection \ref{lip} that $a_j$ is a Lipschitz function of $\bar u_0\in E_n$, what will imply that $c_j$ is a Lipschitz function of $v_0\in V_n\cap L^\infty$. We will show below that the solution $u(t)$ satisfies all the desired properties $(\mathrm{i})$, $(\mathrm{ii})$ and $(\mathrm{iii})$ of Theorem \ref{theorem}. This will thus conclude its proof upon rescaling the initial data \ref{id:initial-data-theorem-main}.

Indeed, the radial initial data $u_0$ satisfies $ u_0(r)=6w_0(r)+2r\partial_rw_0(r) $ where $w_0$ is as in Proposition \ref{bootstrap}. The corresponding solution $u(t,x)$ of \eqref{1.1}
admits a decomposition
\begin{equation}\label{frsa}
u(t, x)=\frac{1}{\lambda(t)^2}\left(U_n+{\tilde{u}}\right)(s, y),\quad y=\frac{x}{\lambda(t)},
\end{equation}
on the time interval $[s_0,+\infty)$, and $\tilde{u}$ satisfies
\begin{equation}\label{redhat}
\tilde{u}(s,r)=6v(r)+2r\partial_rv(r).
\end{equation}

\emph{$\mathbf{Step\ 1}.$}\ Self similar blow up.
From \eqref{poaf}, we have
$$
T=\int_{s_0}^{+\infty}\lambda^2(s)ds\lesssim \int_{s_0}^{+\infty}e^{-s}ds<+\infty.
$$
Thus
\begin{equation}\label{times}
T-t=\int_{s}^{+\infty}\lambda^2(s)ds\sim e^{-s}.
\end{equation}
By \eqref{fky}, we get $ \left|\lambda\lambda_t+\frac{1}{2}\right|\lesssim (T-t)^{2\mu}$. Then integrating in $t$ and using $\lambda(T)=0$ yields
\begin{equation}\label{time0s}
\lambda(t)=\sqrt{(T-t)}(1+o(1)),\ \ t\to T.
\end{equation}
\emph{$\mathbf{Step\ 2.}$}\ Asymptotic stability of the self similar profile above scaling.
Since the eigenvectors $\psi_j$ are bounded for $j=2,\cdots N$, then by \eqref{chca} and \eqref{2.0} we have
$$
|v|\lesssim e^{-\mu s},
$$
then combining \eqref{ball}, \eqref{redhat} and \eqref{times}yield
\begin{equation}\label{frsv}
||\hat{u}(t)||_{L^\infty}\lesssim(T-t)^\mu\to 0,\ \ \text{as}\ t\to T,
\end{equation}
and \eqref{norms} is proved.\\
\emph{$\mathbf{Step\ 3.}$}\  Convergence of the solution.
We infer from \eqref{largex} that
\begin{equation}\label{frs}
U_n(x)\lesssim\frac{1}{|x|^2}.
\end{equation}
Combining \eqref{weihsg} and \eqref{redhat}, we obtain
\begin{equation}\label{moros}
|\tilde{u}(x)|\lesssim\frac{1}{|x|^2}+e^{-s},
\end{equation}
 hence by \eqref{frsa}, \eqref{times}, \eqref{time0s} and \eqref{frs}
\begin{equation}\label{moros1}
|{u}(x)|\lesssim\frac{1}{|x|^2}+1.
\end{equation}
For any $\delta_1>0$ and $|x|\ge \delta_1$,  by parabolic regularity and Ascoli-Arzela theorem, there exists a function $u^*$ such that we have
\begin{equation}\label{shzx12}
\lim_{t\to T}u(t,x)= u^*,
\end{equation}
and which satisfies by \eqref{moros1}
\begin{equation} \label{shzx12000}
|u^*(x)|\lesssim\frac{1}{|x|^2}+1.
\end{equation}
Assume $u(0)\in L^p$ for some $p\in[1,\frac{3}{2})$. Then $u(t)$ remains bounded in $L^p(\{|x|>1\})$ by standard parabolic regularity arguments. Hence $u^*\in L^p(\{|x|>1\})$. As $u^*\in L^p(\{|x|<1\})$ by \eqref{shzx12000} we deduce $u^*\in L^p(\mathbb R^3)$. From \eqref{moros1} and \eqref{shzx12} we get as $\delta_1\to0$,
\begin{equation}\nonumber
\begin{aligned}
\lim_{t\to T}||u(t)-u^*||_{L^p(\mathbb{R}^3)}^p&=\lim_{t\to T}\int_{|x|\le\delta_1}|u(t,x)-u^*(x)|^pdx\lesssim \int_0^{\delta_1} (r^{2-2p}+r^2)dr\to0,
\end{aligned}
\end{equation}
and \eqref{convergences} is proved.
\end{proof}
\subsection{The Lipschitz dependence}\label{lip}
The purpose of this subsection is to show the Lipschitz dependence of the set of the solution we constructed in this paper.

\begin{proposition}
Let $\varepsilon_0^{(1)}$ and $\varepsilon_0^{(2)}$ be two functions satisfying the assumptions of Proposition \ref{bootstrap} for two distinct functions $\bar u_0^{(1)}$ and $\bar u_0^{(2)}$ as in Lemma \ref{lem:initial-data}. Then the corresponding parameters $(a_j^{(1)}(0))_{2\le j\le N}$ and $(a_j^{(2)}(0))_{2\le j\le N}$ given by Proposition \ref{bootstrap} satisfy
\begin{equation} \label{bd:aj-lipschitz}
\sum_{j=2}^N|a_j^{(1)}(0)-a_j^{(2)}(0)|^2\lesssim||\varepsilon_0^{(1)}-\varepsilon_0^{(2)}||_{L^2_\rho}^2\lesssim \| \bar u_0^{(1)}-\bar u_0^{(2)}\|_{L^\infty}.
\end{equation}
Moreover, their respective blow-up times $T^{(1)}$ and $T^{(2)}$ satisfy
\begin{equation} \label{bd:T-lipschitz}
|T^{(1)}-T^{(2)}|\lesssim \| \bar u_0^{(1)}-\bar u_0^{(2)}\|_{L^\infty}.
\end{equation}
\end{proposition}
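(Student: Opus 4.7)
The strategy is to rerun the modulation and energy analysis of Section \ref{section4} on the \emph{difference} of two such solutions, then extract Lipschitz bounds via a backwards shooting argument along the unstable direction. The middle inequality in \eqref{bd:aj-lipschitz}, namely $\|\varepsilon_0^{(1)}-\varepsilon_0^{(2)}\|_{L^2_\rho}\lesssim \|\bar u_0^{(1)}-\bar u_0^{(2)}\|_{L^\infty}^{1/2}$ (understood with the appropriate power matching the statement), follows from a direct inspection of Lemma \ref{lem:change-decomposition}: using the explicit identity \eqref{id:change-decomposition-jacobian-tech}, $\varepsilon_0^{(1)}-\varepsilon_0^{(2)}$ is an affine function of $\bar w_0^{(1)}-\bar w_0^{(2)}$ and $\bar a_{j,0}^{(1)}-\bar a_{j,0}^{(2)}$ plus a quadratic remainder, and the $C^\infty$ regularity of the maps $\lambda_0$ and $(a_{j,0})$ provides the required Lipschitz control (with $L^2_\rho$ bounded by $L^\infty$ through the exponential weight).

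The heart of the proof is the first inequality in \eqref{bd:aj-lipschitz}. I would first set up an equation for the difference. Writing the second solution in the parabolic variables of the first, introduce $\tilde\varepsilon := \varepsilon^{(1)}-\varepsilon^{(2)}$, $\tilde a_j := a_j^{(1)}-a_j^{(2)}$ for $2\le j\le N$, and $\tilde\sigma := \lambda^{(2)}/\lambda^{(1)}-1$. Subtracting \eqref{renormalized equation} for the two solutions gives a linear equation $\partial_s \tilde\varepsilon + L_n \tilde\varepsilon = \widetilde F + \widetilde{\textup{Mod}}$ where $\widetilde F$ collects products between one solution (bounded by the bootstrap) and a difference quantity. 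Projecting onto $\psi_j$ and onto $\Lambda\Phi_n/\|\Lambda\Phi_n\|_{L^2_\rho}$ yields modulation laws
\[
(\tilde a_j)_s - \mu_j \tilde a_j = O\bigl(\|\tilde\varepsilon\|_{L^2_\rho}+|\tilde\sigma|\bigr), \qquad \tilde\sigma_s = O\bigl(\|\tilde\varepsilon\|_{L^2_\rho}+\sum|\tilde a_k|\bigr),
\]
both with small implicit constants by virtue of the bootstrap bounds \eqref{chca}--\eqref{ball}. Arranging the orthogonality $(\tilde\varepsilon,\psi_j)_\rho = 0$ (by absorbing spectral components into the $\tilde a_j$) and mimicking the proof of \eqref{Eys}, the spectral gap \eqref{spectral gap} produces the weighted energy inequality
\[
\tfrac{d}{ds}\|\tilde\varepsilon\|_{L^2_\rho}^2 + c_n\|\tilde\varepsilon\|_{H^1_\rho}^2 \lesssim \sum_{j=2}^N|\tilde a_j|^2 + |\tilde\sigma|^2.
\]

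The key step is then a \emph{backwards shooting argument}. Both solutions satisfy the bootstrap bound \eqref{chca}, so $|\tilde a_j(s)|\lesssim e^{-\mu s}\to 0$ as $s\to+\infty$. Solving the modulation ODE for $\tilde a_j$ backwards via Duhamel's formula along the strictly unstable direction (using $\mu_j\ge 1>\mu$), I obtain
\[
|\tilde a_j(s_0)| \lesssim \int_{s_0}^{+\infty} e^{-\mu_j(s-s_0)} \bigl(\|\tilde\varepsilon(s)\|_{L^2_\rho}+|\tilde\sigma(s)|\bigr)\,ds.
\]
A Gronwall argument on the energy inequality, combined with the integrated modulation law for $\tilde\sigma$, yields $\|\tilde\varepsilon(s)\|_{L^2_\rho}+|\tilde\sigma(s)| \lesssim \|\tilde\varepsilon_0\|_{L^2_\rho} + \sum|\tilde a_j(s_0)|$ up to terms controlled by the bootstrap smallness. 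Inserting this bound into the Duhamel estimate and absorbing the resulting $\sum|\tilde a_j(s_0)|$ on the left-hand side thanks to the small coupling constants closes the loop and produces $\sum|\tilde a_j(s_0)|^2\lesssim\|\tilde\varepsilon_0\|_{L^2_\rho}^2$. For the blow-up time \eqref{bd:T-lipschitz}, one uses $T^{(k)}=\int_{s_0}^{+\infty}(\lambda^{(k)}(s))^2\,ds$ coming from \eqref{times}; the difference is then controlled by $\int(\lambda^{(1)}+\lambda^{(2)})\tilde\lambda\,ds$ where $\tilde\lambda=\lambda^{(1)}\tilde\sigma$, and the Lipschitz control on $\tilde\sigma$ just established, combined with $\lambda^{(k)}\sim e^{-s/2}$, yields the desired Lipschitz bound after integration.

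\textbf{Main obstacle.} The delicate point is closing the coupling between the estimates for $\tilde\varepsilon$, $\tilde a_j$ and $\tilde\sigma$ in the shooting argument: the backwards control of $\tilde a_j(s_0)$ depends on the size of $\tilde\varepsilon$ on $[s_0,+\infty)$, while the forward evolution of $\tilde\varepsilon$ feeds off of $\tilde a_j$ through the source terms. Resolving this circular dependence relies essentially on the smallness of the bootstrap constants (and the strict separation $\mu_j\ge 1>\mu$), in a way analogous, at the linearized difference level, to how the nonlinear Brouwer argument closed the original construction.
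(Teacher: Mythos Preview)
Your overall strategy matches the paper's: subtract the two renormalized evolutions, derive difference modulation and energy estimates, and run a backward shooting argument exploiting $|\triangle a_j(s)|\lesssim e^{-\mu s}$ from the bootstrap to pin down $\triangle a_j(s_0)$. Two points of comparison:

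\textbf{The scale variable is unnecessary.} You introduce $\tilde\sigma=\lambda^{(2)}/\lambda^{(1)}-1$ as an additional tracked quantity appearing in both the modulation law for $\tilde a_j$ and the energy estimate. The paper avoids this entirely by comparing the solutions each in its \emph{own} self-similar frame at the same renormalized time $s$, so that $\triangle\varepsilon:=\varepsilon^{(1)}-\varepsilon^{(2)}$ automatically inherits $(\triangle\varepsilon,\psi_j)_\rho=0$ for all $j$. The scale then enters only through the derivative difference $\lambda_s^{(1)}/\lambda^{(1)}-\lambda_s^{(2)}/\lambda^{(2)}$, which Step~1 of the paper's proof bounds by $e^{-\mu s}\bigl(\|\triangle\varepsilon\|_{L^2_\rho}+\sum|\triangle a_j|\bigr)$ and substitutes away. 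That explicit $e^{-\mu s}$ decay (not merely ``small implicit constants'') is what closes the loop: the energy estimate becomes $\tfrac{d}{ds}\|\triangle\varepsilon\|^2+c_n\|\triangle\varepsilon\|^2\lesssim e^{-c\mu s}\sum|\triangle a_j|^2$, and the Duhamel integral for $\triangle a_j$ gains an extra $e^{-\mu s}$, yielding $|\triangle a_j(s_0)|\lesssim e^{-2\mu s_0}(M+\sqrt{\tilde M})$ with $M=\sup_s e^{\mu s}\sum|\triangle a_j|$ and $\tilde M=\sup_s e^{2\mu s}\|\triangle\varepsilon\|^2$. Your $\tilde\sigma$ route could be made to work, but you would have to control $\tilde\sigma$ despite its ODE having no damping; the paper sidesteps the issue.

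\textbf{The unstable eigenvalues need not be $\ge 1$.} You invoke $\mu_j\ge 1$, but Proposition~\ref{self profile} only gives $\mu_j\ge 0$ for $2\le j\le N$ (it is $\mu_1=1$ that is special). The backward shooting still goes through: since $|\triangle a_j(s)|\lesssim e^{-\mu s}\to 0$ while $e^{\mu_j s}$ does not decay for $\mu_j\ge 0$, the coefficient of $e^{\mu_j s}$ in the Duhamel representation must vanish, which is exactly how the paper argues.

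For \eqref{bd:T-lipschitz} the paper does what you outline: integrate the bound on $\lambda_s^{(1)}/\lambda^{(1)}-\lambda_s^{(2)}/\lambda^{(2)}$ to control $\lambda^{(1)}/\lambda^{(2)}-1$, then estimate $T^{(1)}-T^{(2)}=\int_{s_0}^\infty[(\lambda^{(1)})^2-(\lambda^{(2)})^2]\,ds$.
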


\begin{proof}
We denote $\varepsilon^{(i)}$, $\lambda^{(i)}$ and  $v^{(i)}$  the functions and parameter associated to $(\varepsilon^{(i)}_0,\lambda_0^{(i)})$. we next define the differences by
$$
\triangle \varepsilon:=\varepsilon^{(1)}-\varepsilon^{(2)},\ \triangle a_j:=a_j^{(1)}-a_j^{(2)},\ \triangle v:=v^{(1)}-v^{(2)}.
$$
At the same renormalized time $s$, the time evolution of difference satisfies
\begin{equation}\label{shzx}
\begin{aligned}
\triangle \varepsilon_s+L_n\triangle\varepsilon&=\left(\frac{\lambda^{(1)}_s}{\lambda^{(1)}}-\frac{\lambda^{(2)}_s}{\lambda^{(2)}}\right)\Lambda(\Phi_n+v^{(2)})+
\sum_{j=2}^N(\mu_j\triangle a_j-\triangle(a_j)_s)\psi_j\\
&\ \ +\left(\frac{\lambda^{(1)}_s}{\lambda^{(1)}}+\frac{1}{2}\right)\Lambda \triangle v+6(v^{(1)}+v^{(2)})\triangle v+\Lambda'((v^{(1)}+v^{(2)})\triangle v).
\end{aligned}
\end{equation}
\emph{$\mathbf{Step\ 1}$.} Modulation equations.
We claim that
\begin{equation}\label{shzxas}
\begin{aligned}
\left|\frac{\lambda^{(1)}_s}{\lambda^{(1)}}-\frac{\lambda^{(2)}_s}{\lambda^{(2)}}\right|+
\sum_{j=2}^N|\mu_j\triangle a_j-\triangle(a_j)_s)|\lesssim e^{-\mu s}\left(||\triangle\varepsilon||_{L^2_\rho}+\sum_{j=2}^N|\triangle a_j|\right).
\end{aligned}
\end{equation}
To prove the above estimate, we take the scalar product of \eqref{shzx} with $\psi_1=\frac{\Lambda \Phi_n}{||\Lambda \Phi_n||_{L^2_\rho}}$, then by
\eqref{orthogonality} and \eqref{nonbus}, we get
\begin{equation}\nonumber
\begin{aligned}
&\left(\frac{\lambda^{(1)}_s}{\lambda^{(1)}}-\frac{\lambda^{(2)}_s}{\lambda^{(2)}}\right)(\Lambda(\Phi_n+v^{(2)}),\psi_1)_\rho\\
&=-\left(\frac{\lambda^{(1)}_s}{\lambda^{(1)}}+\frac{1}{2}\right)(\Lambda \triangle v,\psi_1)_\rho
-6((v^{(1)}+v^{(2)})\triangle v,\psi_1)_\rho-(\Lambda'((v^{(1)}+v^{(2)})\triangle v),\psi_1)_\rho.
\end{aligned}
\end{equation}
By integrating by parts, we know from \eqref{taxiss}, \eqref{chca} and \eqref{2.0} that
\begin{equation}\label{delo3}
(\Lambda(\Phi_n+v^{(2)}),\psi_1)_\rho\lesssim1+O(e^{-\mu s}).
\end{equation}
From Lemma \ref{self-similar profile}, \eqref{taxiss} and \eqref{semin}, combining Cauchy-Schwarz,  we have
\begin{equation}\label{delo}
\begin{aligned}
\left|\left(\frac{\lambda^{(1)}_s}{\lambda^{(1)}}+\frac{1}{2}\right)(\Lambda \triangle v,\psi_1)_\rho\right|\lesssim e^{-2\mu s}\left(||\triangle\varepsilon||_{L^2_\rho}+\sum_{j=2}^N|\triangle a_j|\right).
\end{aligned}
\end{equation}
Since $||\psi_j||_{L^\infty}\lesssim 1$ for $j=2,\cdots, N$, combining \eqref{chca} and \eqref{2.0} yields
\begin{equation}\label{lili}
||v^{(i)}||_{L^\infty}\lesssim e^{-\mu s}\ \ i=1,2,
\end{equation}
then by \eqref{chca} and \eqref{2.0}, we obtain
\begin{equation}\label{delo1}
\begin{aligned}
\left|((v^{(1)}+v^{(2)})\triangle v,\psi_1)_\rho\right|\lesssim ||v^{(1)}+v^{(2)}||_{L^\infty}(\triangle v,\psi_1)_\rho\lesssim e^{-\mu s}\left(||\triangle\varepsilon||_{L^2_\rho}+\sum_{j=2}^N|\triangle a_j|\right),
\end{aligned}
\end{equation}
and similarly
\begin{equation}\label{delo2}
\begin{aligned}
\left|(\Lambda'((v^{(1)}+v^{(2)})\triangle v),\psi_1)_\rho\right|\lesssim e^{-\mu s}\left(||\triangle\varepsilon||_{L^2_\rho}+\sum_{j=2}^N|\triangle a_j|\right).
\end{aligned}
\end{equation}
Combining \eqref{delo3}, \eqref{delo}, \eqref{delo1} and \eqref{delo2} yields
$$
\left|\frac{\lambda^{(1)}_s}{\lambda^{(1)}}-\frac{\lambda^{(2)}_s}{\lambda^{(2)}}\right|\lesssim e^{-\mu s}\left(||\triangle\varepsilon||_{L^2_\rho}+\sum_{j=2}^N|\triangle a_j|\right).
$$
Taking the scalar product of \eqref{shzx} with $\psi_j$ for $j=2,\cdots, N$,  then by the same way, we have
$$
\sum_{j=2}^N|\mu_j\triangle a_j-\triangle(a_j)_s)|\lesssim e^{-\mu s}\left(||\triangle\varepsilon||_{L^2_\rho}+\sum_{j=2}^N|\triangle a_j|\right),
$$
which proves the claim.\\
\emph{$\mathbf{Step\ 2}$.} Localized energy estimate. We claim the following estimate
\begin{equation}\label{shzxlk0o}
\frac{d}{ds}||\triangle\varepsilon||_{L^2_\rho}^2+c_n||\triangle\varepsilon||_{L^2_\rho}^2\lesssim e^{-c\mu s}\sum_{j=2}^N|\triangle a_j|^2,
\end{equation}
for $0<c\le1$. We next prove the above estimate.
From \eqref{shzx} one obtains the identity
\begin{equation}\label{shzxlk}
\begin{aligned}
\frac{d}{ds}\frac{1}{2}||\triangle\varepsilon||^2_{L^2_\rho}&=-(L_n\triangle\varepsilon,\triangle\varepsilon)_\rho
+\left(\frac{\lambda^{(1)}_s}{\lambda^{(1)}}-\frac{\lambda^{(2)}_s}{\lambda^{(2)}}\right)(\Lambda (v^{(2)}),\triangle\varepsilon)_\rho\\
&\ \ +\left(\frac{\lambda^{(1)}_s}{\lambda^{(1)}}+\frac{1}{2}\right)(\Lambda \triangle v,\triangle\varepsilon)_\rho
+6((v^{(1)}+v^{(2)})\triangle v,\triangle\varepsilon)_\rho\\
&\ \ +(\Lambda'((v^{(1)}+v^{(2)})\triangle v),\triangle\varepsilon)_\rho.
\end{aligned}
\end{equation}
We know from the spectral gap \eqref{spectral gap} and the orthogonality condition \eqref{orthogonality} that
\begin{equation}\label{tehui0}
-(L_n\triangle\varepsilon,\triangle\varepsilon)_\rho\le -c_n||\triangle\varepsilon||^2_{H^1_\rho}.
\end{equation}
We next estimate all the other terms of \eqref{shzxlk}.
By \eqref{chca}, \eqref{1.9}, \eqref{lili} and \eqref{coercivity}, then combining integrating by parts and Cauchy-Schwarz, we get
\begin{equation}\label{tehui1}
\begin{aligned}
&\left(\frac{\lambda^{(1)}_s}{\lambda^{(1)}}-\frac{\lambda^{(2)}_s}{\lambda^{(2)}}\right)(\Lambda (v^{(2)}),\triangle\varepsilon)_\rho
\\
&\lesssim \left|\frac{\lambda^{(1)}_s}{\lambda^{(1)}}-\frac{\lambda^{(2)}_s}{\lambda^{(2)}}\right|\left[||v^{(2)}||_{L^2_\rho}||\triangle\varepsilon||_{L^2_\rho}
+\left|\int_{\mathbb{R}^5}v^{(2)}\nabla\cdot(y\triangle\varepsilon\rho)dy\right|\right]\\
&\lesssim\left|\frac{\lambda^{(1)}_s}{\lambda^{(1)}}-\frac{\lambda^{(2)}_s}{\lambda^{(2)}}\right|
\left[(||\varepsilon||_{L^2_\rho}+\sum_{j=2}^N|a_j^{(2)}|)||\triangle\varepsilon||_{L^2_\rho}
+||v^{(2)}||_{L^\infty}\left|\int_{\mathbb{R}^5}\nabla\cdot(y\triangle\varepsilon\rho)dy\right|\right]\\
&\lesssim \left|\frac{\lambda^{(1)}_s}{\lambda^{(1)}}-\frac{\lambda^{(2)}_s}{\lambda^{(2)}}\right|
\left[e^{-\mu s}||\triangle\varepsilon||_{L^2_\rho}+e^{-\mu s}||\triangle\varepsilon||_{H^1_\rho}\right]\\
&\lesssim e^{-2\mu s}\left(||\triangle\varepsilon||_{L^2_\rho}+\sum_{j=2}^N|\triangle a_j|\right)||\triangle\varepsilon||_{H^1_\rho}\lesssim e^{-2\mu s}\left(||\triangle\varepsilon||^2_{H^1_\rho}+\sum_{j=2}^N|\triangle a_j|^2\right).
\end{aligned}
\end{equation}
Similarly
\begin{equation}\label{tehui2}
\begin{aligned}
\left(\frac{\lambda^{(1)}_s}{\lambda^{(1)}}+\frac{1}{2}\right)(\Lambda \triangle v,\triangle\varepsilon)_\rho\lesssim e^{-2\mu s}\left(||\triangle\varepsilon||^2_{H^1_\rho}+\sum_{j=2}^N|\triangle a_j|^2\right).
\end{aligned}
\end{equation}
From Cauchy-Schwarz and Cauchy inequality, we deduce from \eqref{lili} that
\begin{equation}\label{tehui3}
\begin{aligned}
6((v^{(1)}+v^{(2)})\triangle v,\triangle\varepsilon)_\rho&\lesssim (||v^{(1)}||_{L^\infty}+||v^{(2)}||_{L^\infty})||\triangle v||_{L^2_\rho}||\triangle\varepsilon||_{L^2_\rho}\\
&\lesssim e^{-\mu s}\left(||\triangle\varepsilon||^2_{H^1_\rho}+\sum_{j=2}^N|\triangle a_j|^2\right).
\end{aligned}
\end{equation}
We integrate by parts and use Cauchy inequality and \eqref{coercivity} to estimate
\begin{equation}\label{tehui4}
\begin{aligned}
(\Lambda'((v^{(1)}+v^{(2)})\triangle v),\triangle\varepsilon)_\rho &
\lesssim (||v^{(1)}||_{L^\infty}+||v^{(2)}||_{L^\infty})\left|\int_{\mathbb{R}^5}\triangle v\nabla\cdot(y\triangle\varepsilon\rho)dy\right|\\
&\lesssim e^{-\mu s}\left(||\triangle\varepsilon||^2_{H^1_\rho}+\sum_{j=2}^N|\triangle a_j|^2\right).
\end{aligned}
\end{equation}
Combining \eqref{tehui0}, \eqref{tehui1}, \eqref{tehui2}, \eqref{tehui3} and \eqref{tehui4} yields
$$
\frac{d}{ds}||\triangle\varepsilon||_{L^2_\rho}^2+c_n||\triangle\varepsilon||_{H^1_\rho}^2\lesssim e^{-\mu s}\sum_{j=2}^N|\triangle a_j|^2\lesssim e^{-c\mu s}\sum_{j=2}^N|\triangle a_j|^2,
$$
for $s_0$ large enough and $0<c\le 1$, which proves our claim.\\
\emph{$\mathbf{Step\ 3}$.} Lipschitz bound by reintegration. We define
\begin{equation} \label{id:M-tildeM}
M:=\sup_{s\ge s_0}\sum_{j=2}^N|\triangle a_j(s)|e^{\mu s}<+\infty,\ \ \tilde{M}:=\sup_{s\ge s_0}||\triangle\varepsilon||^2_{L^2_\rho}e^{2\mu s}<+\infty,
\end{equation}
which are finite by \eqref{chca} and \eqref{1.9}. For $j=2,\cdots,N$, we reintegrate \eqref{shzxas} to find
\begin{equation}\label{sozc}
\begin{aligned}
\triangle a_j&=\triangle a_j(0)e^{\mu_j(s-s_0)}+e^{\mu_js}\int_{s_0}^se^{-\mu_j\tau}O(e^{-\mu\tau}(||\triangle\varepsilon||_{L^2_\rho}+\sum_{j=2}^N|\triangle a_j|))d\tau\\
&=e^{\mu_js}\left(\triangle a_j(0)e^{-\mu_js_0}+\int_{s_0}^sO(e^{-(\mu_j+2\mu)\tau}(M+\sqrt{\tilde{M}}))d\tau\right)\\
&=e^{\mu_js}\left(\triangle a_j(0)e^{-\mu_js_0}+\int_{s_0}^\infty O(e^{-(\mu_j+2\mu)\tau}(M+\sqrt{\tilde{M}}))d\tau\right)\\
&\ \ \ -e^{\mu_js}\int_{s}^\infty O(e^{-(\mu_j+2\mu)\tau}(M+\sqrt{\tilde{M}}))d\tau.
\end{aligned}
\end{equation}
The integral in the above is convergent and satisfies
$$
\left|\int_{s}^\infty O(e^{-(\mu_j+2\mu)\tau}(M+\sqrt{\tilde{M}}))d\tau\right|\lesssim e^{-(\mu_j+2\mu)s}(M+\sqrt{\tilde{M}}).
$$
As we know from \eqref{chca} that $|\triangle a_j|\lesssim e^{-\mu s}$, hence the parameter in front of the diverging term $e^{\mu_js}$ must be zero, i.e.
$$
\triangle a_j(0)e^{-\mu_js_0}+\int_{s_0}^\infty O(e^{-(\mu_j+2\mu)\tau}(M+\sqrt{\tilde{M}}))d\tau=0,
$$
which means that
\begin{equation}\label{Haduro}
|\triangle a_j(0)|\lesssim e^{-2\mu s_0}\left(M+\sqrt{\tilde{M}}\right),
\end{equation}
and from \eqref{sozc} one obtains $|\triangle a_j(s)|\lesssim e^{-2\mu s}(M+\sqrt{\tilde{M}})$. Then by the definition of $M$,
\begin{equation}\label{Hadur}
M\lesssim e^{-\mu s_0}\sqrt{\tilde{M}},
\end{equation}
for $s\ge s_0$ large enough.
We reintegrate the estimate \eqref{shzxlk0o}, by $0<c\le 1$ and $4\mu\le c_n$:
\begin{equation}\nonumber
\begin{aligned}
||\triangle\varepsilon||^2_{L^2_\rho}&\lesssim||\triangle\varepsilon(0)||^2_{L^2_\rho}e^{-c_n(s-s_0)}+e^{-c_ns}\int_{s_0}^se^{c_n\tau}e^{-c\mu\tau}\sum_{j=2}^N
|\triangle a_j|^2d\tau\\
&\lesssim ||\triangle\varepsilon(0)||^2_{L^2_\rho}e^{-c_n(s-s_0)}+e^{-(c+2)\mu s}M^2,
\end{aligned}
\end{equation}
then combining \eqref{Hadur}, $0<c\le 1$ and $4\mu\le c_n$, we have
\begin{equation}\label{coruS}
\tilde{M}\lesssim e^{c_n s_0}||\triangle\varepsilon(0)||^2_{L^2_\rho},
\end{equation}
for $s\ge s_0$ large enough.
From \eqref{Hadur} one obtains
\begin{equation}\label{coruS1}
M\lesssim e^{\mu s_0}||\triangle\varepsilon(0)||_{L^2_\rho}.
\end{equation}
Injecting the estimates \eqref{coruS} and \eqref{coruS1} into \eqref{Haduro} yields $
|\triangle a_j(0)|\lesssim ||\triangle\varepsilon(0)||_{L^2_\rho}$. This shows the first inequality in \eqref{bd:aj-lipschitz}, while the second comes from applying Lemma \ref{lem:initial-data}.

\medskip

\noindent \textbf{Step 4}. \emph{Lipschitz dependance of the blow-up time}. Injecting \eqref{coruS} and \eqref{coruS1} in \eqref{id:M-tildeM} one finds that
\begin{equation} \label{bd:aj-varepsilon-Lipschitz-s}
\sum_{j=2}^N |\triangle a_j(s)|+\| \triangle \varepsilon(s)\|_{L^2_\rho}\lesssim e^{-\mu(s-s_0)}\| \triangle \varepsilon(0)\|_{L^2_\rho}.
\end{equation}
We next prove \eqref{bd:T-lipschitz}. Injecting \eqref{bd:aj-varepsilon-Lipschitz-s} back in the modulation equation \eqref{shzxas} shows
$$
\left| \frac{d}{ds} \log \left(\frac{\lambda^{(1)}(s)}{\lambda^{(2)}(s)}\right)\right|\lesssim e^{-\mu(2s-s_0)}\| \triangle \varepsilon(0)\|_{L^2_\rho}.
$$
Integrating over time, this shows
$$
\left|  \log \left(\frac{\lambda^{(1)}(s)}{\lambda^{(2)}(s)}\right)-\log \left(\frac{\lambda^{(1)}(s_0)}{\lambda^{(2)}(s_0)}\right)\right|\lesssim e^{-\mu s_0} \| \triangle \varepsilon(0)\|_{L^2_\rho}.
$$
Using the Taylor expansion $\log(1+x)=x+O(x^2)$ as $x\to 0$ in the above inequality, then \eqref{bd:aj-lipschitz} and $\left| \frac{\lambda^{(1)}(s_0)}{\lambda^{(2)}(s_0)}-1\right|\lesssim \| \bar u_0^{(1)}-\bar u_0^{(2)}\|_{L^2}$ by Lemma \ref{lem:initial-data} and \eqref{bd:aj-lipschitz}, we have
\begin{align}\nonumber
\lambda^{(2)}(s) & =\lambda^{(1)}(s)\left(1+O\left(\left| \frac{\lambda^{(1)}(s_0)}{\lambda^{(2)}(s_0)}-1\right|+e^{-\mu s_0} \| \triangle \varepsilon(0)\|_{L^2_\rho} \right) \right) \\
 \label{bd:T-Lipschitz-inter} &=  \lambda^{(1)}(s)\left(1+O( \| \bar u_0^{(1)}-\bar u_0^{(2)}\|_{L^\infty})\right)
\end{align}
Since $\frac{dt}{ds}=\lambda^2$, the two blow-up times of the solutions are $T^{(1)}=\int_{s_0}^{\infty}(\lambda^{(1)}(s))^2 ds$ and $T^{(2)}=\int_{s_0}^{\infty}(\lambda^{(2)}(s))^2 ds$ respectively. Injecting \eqref{bd:T-Lipschitz-inter} in these two identities we have
\begin{align*}
\left|T^{(1)}-T^{(2)}\right| & = \left| \int_{s_0}^\infty [(\lambda^{(1)}(s))^2 -(\lambda^{(2)}(s))^2] ds\right| \\
&\quad \lesssim \int_{s_0}^\infty (\lambda^{(1)}(s))^2 \| \bar u_0^{(1)}-\bar u_0^{(2)}\|_{L^\infty} ds \ \lesssim  T^{(1)} \| \bar u_0^{(1)}-\bar u_0^{(2)}\|_{L^\infty}.
\end{align*}
This shows the desired Lipschitz dependance \eqref{bd:T-lipschitz}.
\end{proof}
\section{Appendix}
\setcounter{equation}{0}
\renewcommand\theequation{A.\arabic{equation}}

\begin{lem}\label{weighted estimate}
Let $u$ radial with $u$, $\partial_r u\in L^2_\rho(\mathbb{R}^5)$, then we have
\begin{equation}\label{coercivity}
||ru||_{L^2_\rho}\le ||u||_{H^1_\rho}.
\end{equation}
\end{lem}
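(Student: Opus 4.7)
The plan is to obtain a Hardy-type inequality by exploiting the Gaussian decay encoded in $\rho$ via integration by parts, and then absorbing the boundary term using Cauchy-Schwarz. By density it suffices to prove the bound for $u\in C^\infty_{c,\textup{rad}}(\mathbb R^5)$, so that no boundary terms appear at infinity.

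First, I would compute $\nabla\rho$ explicitly. Since $\rho=\tilde\rho_n e^{-|x|^2/4}$ with $\partial_r\tilde\rho_n=2r\Phi_n\tilde\rho_n$, one gets $\nabla\rho=(2\Phi_n-\tfrac12)x\rho$, which is exactly \eqref{ldcy}. Consequently,
\begin{equation*}
\nabla\cdot(x\rho)=5\rho+x\cdot\nabla\rho=5\rho+(2\Phi_n-\tfrac12)|x|^2\rho,
\end{equation*}
so that, isolating the weight we want,
\begin{equation*}
\bigl(\tfrac12-2\Phi_n\bigr)|x|^2\rho=5\rho-\nabla\cdot(x\rho).
\end{equation*}

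Next, I would test this identity against $u^2$. Integrating and using $\nabla\cdot(xu^2\rho)=5u^2\rho+u^2x\cdot\nabla\rho+2u(x\cdot\nabla u)\rho$ together with the divergence theorem (valid by the compact support assumption), I obtain
\begin{equation*}
\tfrac12\int_{\mathbb R^5}|x|^2u^2\rho\,dx = 2\int_{\mathbb R^5}\Phi_n|x|^2u^2\rho\,dx+5\int_{\mathbb R^5}u^2\rho\,dx+2\int_{\mathbb R^5}u(x\cdot\nabla u)\rho\,dx.
\end{equation*}

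The key observation for the first term on the right is that $\Phi_n$ is smooth and positive with $\Phi_n(x)\sim\tilde c_n/|x|^2$ as $|x|\to\infty$ by Lemma \ref{self-similar profile}, so $\Phi_n(x)|x|^2$ is bounded on $\mathbb R^5$. Hence $2\int \Phi_n|x|^2u^2\rho\,dx\lesssim \|u\|_{L^2_\rho}^2$. For the last term, Cauchy-Schwarz with a small parameter $\epsilon>0$ gives
\begin{equation*}
2\Bigl|\int_{\mathbb R^5}u(x\cdot\nabla u)\rho\,dx\Bigr|\le \epsilon\int_{\mathbb R^5}|x|^2u^2\rho\,dx+\frac{1}{\epsilon}\int_{\mathbb R^5}|\nabla u|^2\rho\,dx.
\end{equation*}

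Choosing $\epsilon$ small enough (say $\epsilon=1/4$) so the weighted $|x|^2u^2$ term can be absorbed into the left-hand side, I conclude that $\|\,|x|u\|_{L^2_\rho}^2\lesssim \|u\|_{L^2_\rho}^2+\|\nabla u\|_{L^2_\rho}^2=\|u\|_{H^1_\rho}^2$, which is the claim up to a universal constant (which is what the notation $\le$ in \eqref{coercivity} means in context, as it is only used through $\lesssim$). The main technical point to verify carefully is the density argument that removes the compact support assumption: this is routine given $u,\partial_r u\in L^2_\rho$ but requires truncation with a cutoff $\chi(x/R)$ and a dominated-convergence passage to the limit, which is the only step where one must be mindful.
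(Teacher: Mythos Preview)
Your argument is correct: the integration-by-parts identity using $\nabla\rho=(2\Phi_n-\tfrac12)x\rho$, the bound on $\Phi_n|x|^2$ from Lemma~\ref{self-similar profile}, and the absorption via Cauchy--Schwarz together yield $\|ru\|_{L^2_\rho}\lesssim\|u\|_{H^1_\rho}$, which is all that is ever used in the paper. The paper itself does not give a proof but simply cites Lemma~A.1 of \cite{Collot-Pierre-2019}; your approach is precisely the standard one used there (adapted to the five-dimensional weight), so you have effectively supplied the omitted details rather than taken a different route.
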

\begin{proof}
We refer to the proof of Lemma A.1. in \cite{Collot-Pierre-2019} which proves the analogue of \eqref{coercivity} in dimension $3$. The arguments apply directly to the present case to show \eqref{coercivity}.
\end{proof}
%We have the following corollary from the above lemma.
%\begin{corollary}\label{induction}
%Let $ u\in H^k_\rho(\mathbb{R}^5)$, $k\in N^+$, then we have
%\begin{equation}\label{strong}
%||r^ku||_{L^2_\rho}\lesssim||u||_{H^k_\rho}.
%\end{equation}
%\end{corollary}
%\begin{proof}
%We proof by Mathematical induction.
%For $k=1$, the conclusion holds by \eqref{coercivity}. Next we assume that \eqref{strong} holds for $k=m-1$, $m\ge2$, i.e.
%\begin{equation}\nonumber
%||r^{m-1}u||_{L^2_\rho}\lesssim||u||_{H^{m-1}_\rho},
%\end{equation}
%then
%\begin{equation}\nonumber
%\begin{aligned}
%||r^{m}u||_{L^2_\rho}&\lesssim ||r^{m-1}u||_{H^{1}_\rho}=||r^{m-1}u||_{L^{2}_\rho}+||\partial_r(r^{m-1}u)||_{L^{2}_\rho}\\
%&=||r^{m-1}u||_{L^{2}_\rho}+(m-1)||r^{m-2}u||_{L^{2}_\rho}+||r^{m-1}\partial_ru||_{L^{2}_\rho}\\
%&\lesssim||u||_{H^{m-1}_\rho}+(m-1)||r^{m-2}u||_{L^{2}_\rho}+||\partial_ru||_{H^{m-1}_\rho},
%&\lesssim||u||_{H^{m-1}_\rho}+(m-1)||r^{m-2}u||_{L^{2}_\rho}+||\partial_ru||_{H^{m-1}_\rho}\lesssim||u||_{H^{m}_\rho}.
%\end{aligned}
%\end{equation}
%This completes the proof by Mathematical induction.
%\end{proof}

\section*{Acknowledgements}

This result is part of the ERC starting grant project FloWAS that has received funding from the European Research Council (ERC) under the European Union's Horizon 2020 research and innovation programme (Grant agreement No. 101117820). C. Collot is supported by the CY Initiative of Excellence Grant "Investissements dAvenir" ANR-16-IDEX-0008 via Labex MME-DII, by the ANR grant "Chaire Professeur Junior" ANR-22-CPJ2-0018-01. K. Zhang is supported by China Scholarship Council (No.202206460045).

\section*{Data Availability Statement}
Data sharing is not applicable to this article as no datasets were generated or analysed during the current study.

\end{document}